\newtheorem{theorem}{Theorem}
\newtheorem{example}{Example}
\newtheorem{cor}[theorem]{Corollary}
\newtheorem{question}{Question}
\newtheorem{lemma}[theorem]{Lemma}
\newtheorem{prop}[theorem]{Proposition}
\newcommand{\p}{P_{n,2d}}
\newcommand{\RR}{\mathbb{R}}
\newcommand{\CC}{\mathbb{C}}
\newcommand{\KK}{\mathbb{K}}
\newcommand{\PP}{\mathbb{P}}
\newcommand{\sq}{\Sigma_{n,2d}}
\newcommand{\ip}[2]{\langle #1,#2 \rangle }
\newcommand{\st}{\hspace{2mm} \mid \hspace{2mm}}
\newcommand{\binomi}[2]{\left(\genfrac{}{}{0 pt}{}{#1}{#2}\right)}
\newcommand{\rpn}{\mathbb{RP}^{n-1}}
\newcommand{\rank}{\mathrm{rank}}
\numberwithin{theorem}{section}
\numberwithin{equation}{section}
\numberwithin{question}{section}
\newtheorem{itdefinition}[theorem]{Definition}
\newenvironment{definition}{\begin{itdefinition}\rm}{\end{itdefinition}}
\DeclareMathOperator{\satdeg}{satdeg}
\DeclareMathOperator{\reg}{reg}
\DeclareMathOperator{\Aff}{Aff}
\begin{document}
\author{Grigoriy Blekherman, Sadik Iliman, Martina Kubitzke}
\title{Dimensional Differences Between Faces of the Cones of Nonnegative Polynomials and Sums of Squares}

\address{Georgia Inst. of Technology, Atlanta, USA}
\email{greg@math.gatech.edu}
\address{Goethe-Universit\"at, FB 12 -- Institut f\"ur Mathematik,
Postfach 11 19 32, D-60054 Frankfurt am Main, Germany}
\email{\{iliman,kubitzke\}@math.uni-frankfurt.de}

\subjclass[2010]{13A15, 14P99 , 26C10}
\keywords{nonnegative polynomial, sums of squares, faces, dimensional differences}

\begin{abstract}
We study dimensions of the faces of the cone of nonnegative polynomials and the cone of sums of squares; we show that there are dimensional differences between corresponding faces of these cones. These dimensional gaps occur in all cases where there exist nonnegative polynomials that are not sums of squares. The gaps occur generically, they are not the product of selecting special faces of the cones. For ternary forms and quaternary quartics, we completely characterize when these differences are observed. Moreover, we provide an explicit description for these differences in the two smallest cases, in which
the cone of nonnegative polynomials and the cone of sums of squares are different. Our results follow from more general results concerning the relationship between the second ordinary power and the second symbolic power of the vanishing ideal of points in projective space.
\end{abstract}

\maketitle

\section{Introduction}


Let $H_{n,2d}$ denote the set of homogeneous polynomials (forms) in $n$ variables of degree $2d$ over $\mathbb R$ and let $\RR\PP^{n-1}$ resp. $\CC\PP^{n-1}$ denote the $(n - 1)$-dimensional real resp. complex projective space. 
For a fixed number of variables $n$ and degree $2d$, nonnegative polynomials and sums of squares form closed convex cones in $H_{n,2d}$. We call these cones $P_{n,2d}$ and $\Sigma_{n,2d}$, respectively, i.\,e.,

$$P_{n,2d}=\left\{p \in H_{n,2d}\hspace{2mm} | \hspace{2mm} p(x)\geq 0 \hspace{2mm} \text{for all} \hspace{2mm} x \in \RR\PP^{n-1}\right\},$$

$$\Sigma_{n,2d}=\left\{p \in P_{n,2d} \hspace{2mm} | \hspace{2mm} p(x)=\sum q_i^2  \hspace{2mm} \text{for some} \hspace{2mm} q_i \in H_{n,d} \right\}.$$

The relationship between the cone of nonnegative polynomials and the cone of sums of squares has been studied since Hilbert's seminal paper in 1888 \cite{Hilbert}. In this article, Hilbert showed that a nonnegative form in $n$ variables of even degree $2d$ has to be a sum of squares of forms only in the following cases: the form is bivariate, i.\,e., $n=2$, the form is quadratic, i.\,e., $2d=2$, or the form is a ternary quartic, i.\,e., $n=3$ and $2d=4$. In all other cases, he proved existence of nonnegative polynomials that are not sums of squares. It is remarkable that Hilbert's proof was existential and not constructive, and the first explicit nonnegative polynomial not being a sum of squares was found only 70 years later by Motzkin \cite{Rez1,Rez2}. Recently, in small dimensions, several aspects of the differences between these cones, such as the structure of the dual cones and the algebraic boundaries of these cones, were investigated (see \cite{Blekherman,Blekhermangorenstein,Blekhermanboundary}).

Understanding the precise relationship between the cones $P_{n,2d}$ and $\Sigma_{n,2d}$ is interesting from the point of view of computational complexity in polynomial optimization and also for practical testing for nonnegativity (see, e.\,g., \cite{Laurent}). Indeed, while testing whether a polynomial is nonnegative is NP-hard already in degree 4 \cite{NP}, testing whether a polynomial is a sum of squares can be reduced to a semidefinite programming problem, which can be solved efficiently \cite{pablo}. 

Unfortunately, except for the cases of $n=2$, the univariate case for nonhomogeneous polynomials, the case $2d=2$ (see \cite[Sections II.11 and II.12]{Sasha1}), and, to some extent, the case of ternary quartics, neither the structure of these cones nor their precise relationship with each other is very well understood. 

We focus on the study of faces of the cones $P_{n,2d}$ and $\Sigma_{n,2d}$, in particular on the investigation of their possible dimensions. A face $F$ of a convex set $K$ is called \emph{exposed} if there exists a supporting hyperplane $H$ such that $F=H\cap K$.
It is easy to describe exposed faces of $P_{n,2d}$. Indeed, the boundary of the cone $P_{n,2d}$ consists of all the forms with at least one zero, whereas its interior consists of all strictly positive forms. In particular, a maximal proper face of $P_{n,2d}$ consists of all forms with exactly one prescribed zero \cite[Chapter 4]{Blekherman:Parrilo:Thomas}.

Let $\Gamma$ be a set of distinct points in $\rpn$. The forms in $P_{n,2d}$ vanishing at all points of $\Gamma$ form an exposed face of $\p$, which we call $P_{n,2d}(\Gamma)$:
$$\p(\Gamma)=\{p \in \p \st p(s)=0 \hspace{2mm} \text{for all} \hspace{2mm} s \in \Gamma\}.$$

\noindent Similarly, we let $\sq(\Gamma)$ be the exposed face of $\sq$ consisting of forms that vanish at all points of $\Gamma$:
$$\sq(\Gamma)=\{p \in \sq \st p(s)=0 \hspace{2mm} \text{for all} \hspace{2mm} s \in \Gamma\}.$$

Moreover, any exposed face of $\p$ has a description of the above form, and the set $\Gamma$ can be chosen to be finite \cite[Chapter 4]{Blekherman:Parrilo:Thomas}.
We note that, despite this simple description of exposed faces, the full facial structure of $P_{n,2d}$ ``should" be very difficult to fully describe since --~as already mentioned~-- the problem of testing for nonnegativity is known to be NP-hard. 
Furthermore, even for the exposed faces $\p(\Gamma)$ and $\sq(\Gamma)$, except for the simple cases of $n=2$ and $2d=2,$ the possible dimensions of these faces have not been investigated yet. We close this gap by deriving estimates for the dimensions of the faces $\p(\Gamma)$ and $\sq(\Gamma)$ and by establishing dimensional differences between those faces in many cases. It is worth remarking that also Hilbert's original proof \cite{Hilbert} of existence of nonnegative polynomials that are not sums of squares can be viewed as establishing a dimensional gap of this type. This dimensional point of view was first made explicit in \cite{Rez2}.

For a generic set $\Gamma$ we reduce the question of dimensions of $\p(\Gamma)$ and $\sq(\Gamma)$ to the question of dimensions of the degree $2d$ components of certain ideals  associated with $\Gamma$. 
To simplify notation, we set $\mathbb{R}[x]=\mathbb{R}[x_1,\ldots,x_n]$. 
For an ideal $I \subset \mathbb{R}[x]$, let $I^2$ denote the second ordinary power of $I$, and let $I^{(2)}$ denote the second symbolic power of $I$. Moreover, let $I_d$ denote the homogeneous degree $d$ part of $I$.

If $I(\Gamma)\subset \mathbb{R}[x]$ is the vanishing ideal of a finite set of distinct points $\Gamma \subset \RR\PP^{n-1}$, then the second symbolic power $I^{(2)}(\Gamma)$ of $I(\Gamma)$ is the ideal of all forms in $\mathbb{R}[x]$ vanishing at every point of $\Gamma$ to order at least $2$:
$$I^{(2)}(\Gamma)=\{p \in \mathbb{R}[x] \,\, |\,\, \nabla p(s)=0 \hspace{2mm} \text{for all} \hspace{2mm} s \in \Gamma\}.$$

Since every nonnegative form that is zero on $s \in \Gamma$ must vanish to order 2 on $s$, it follows that the face $P_{n,2d}(\Gamma)$ is contained in the degree $2d$ part of $I^{(2)}(\Gamma)$:
$$P_{n,2d}(\Gamma) \subset I^{(2)}_{2d}(\Gamma).$$ 

On the other hand, we know that the face $\sq(\Gamma)$ is contained in the $2d$ part of the ordinary square of $I(\Gamma)$, i.\,e.,
$$\sq(\Gamma) \subset I_{2d}^{2}(\Gamma).$$  It is easy to see that this inclusion is actually full-dimensional, since we can choose a basis of $I_{2d}^2(\Gamma)$ consisting of squares and any nonnegative linear combination of these squares lies in $\sq(\Gamma)$. More precisely, the following is true.

\begin{prop}\label{prop:sosFace}
Let $\Gamma \subset \RR\PP^{n-1}$ be a finite set. Then $\Sigma_{n,2d}(\Gamma)$ is a full-dimensional convex cone in the vector space of all forms of degree $2d$ in $I^2_{2d}(\Gamma)$:
$$\dim \sq(\Gamma)=\dim I^{2}_{2d}(\Gamma).$$
\end{prop}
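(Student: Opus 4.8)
The plan is to describe $\Sigma_{n,2d}(\Gamma)$ explicitly as a cone of sums of squares of forms drawn from a single fixed vector space, to invoke the standard fact that such a cone is full-dimensional inside its own linear span, and then to identify that span with $I^2_{2d}(\Gamma)$.

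First I would record the shape of the cone. If $p=\sum_i q_i^2\in\Sigma_{n,2d}(\Gamma)$ with $q_i\in H_{n,d}$, then for each $s\in\Gamma$ the identity $\sum_i q_i(s)^2=p(s)=0$ forces $q_i(s)=0$ for all $i$, so each $q_i$ lies in $I_d(\Gamma)$; conversely every finite sum of squares of elements of $I_d(\Gamma)$ is nonnegative and vanishes on $\Gamma$. Hence $\Sigma_{n,2d}(\Gamma)=\{\,\sum_i q_i^2 \st q_i\in I_d(\Gamma)\,\}$. Polarization, $qr=\tfrac14\big((q+r)^2-(q-r)^2\big)$ with $q\pm r\in I_d(\Gamma)$, then shows that the linear span of this cone is exactly the subspace $I_d(\Gamma)\cdot I_d(\Gamma)\subseteq H_{n,2d}$ spanned by all products of pairs of elements of $I_d(\Gamma)$. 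Since the affine hull of a convex cone containing $0$ equals its linear span, and since the sum of the squares of a basis of $I_d(\Gamma)$ is a relative interior point of the cone (its Gram matrix is positive definite), one concludes that $\Sigma_{n,2d}(\Gamma)$ is full-dimensional in $I_d(\Gamma)\cdot I_d(\Gamma)$, so $\dim\Sigma_{n,2d}(\Gamma)=\dim\big(I_d(\Gamma)\cdot I_d(\Gamma)\big)$.

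It remains to prove the equality $I_d(\Gamma)\cdot I_d(\Gamma)=I^2_{2d}(\Gamma)$. One inclusion is immediate, since $qr\in I(\Gamma)^2$ for $q,r\in I_d(\Gamma)$. For the reverse inclusion, $I^2_{2d}(\Gamma)$ is spanned by products $fg$ with $f\in I_a(\Gamma)$, $g\in I_b(\Gamma)$ and $a+b=2d$; the diagonal case $a=b=d$ already lies in $I_d(\Gamma)\cdot I_d(\Gamma)$, so the point is to handle a ``mixed'' product with, say, $a<d<b$. Here the natural device is: because $a\le d$, every product $mf$ of $f$ with a monomial $m$ of degree $d-a$ lies in $I_d(\Gamma)$; hence if one can write the higher-degree factor as $g=\sum_j m_j h_j$ with each $m_j$ a monomial of degree $b-d=d-a$ and each $h_j\in I_d(\Gamma)$, then $fg=f\sum_j m_j h_j=\sum_j(fm_j)\,h_j\in I_d(\Gamma)\cdot I_d(\Gamma)$, each factor $fm_j$ and $h_j$ having degree $d$. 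So it suffices to show $I_b(\Gamma)\subseteq R_{b-d}\cdot I_d(\Gamma)$ for the degrees $b$ that actually occur, where $R=\mathbb{R}[x]$.

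I expect this last point to be the main obstacle: one must control $I(\Gamma)$ in degree $2d$ tightly enough to know that its square acquires nothing beyond products of degree-$d$ forms, which amounts to a ``generation in degree $\le d$'' statement for $I(\Gamma)$ in the relevant range of degrees and would be approached through the Hilbert function and Castelnuovo--Mumford regularity of $I(\Gamma)$. Everything else --- writing down the cone, computing its linear span, and checking full-dimensionality --- is routine once this is settled.
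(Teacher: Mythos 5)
Your first two steps are correct, and they are essentially the argument the paper has in mind: at a real point of $\Gamma$ every square summand of a form in $\sq(\Gamma)$ must vanish, so $\sq(\Gamma)$ is exactly the cone of sums of squares of elements of $I_d(\Gamma)$; by polarization its linear span is the space $I_d(\Gamma)\cdot I_d(\Gamma)$ of pairwise products, and the image of the open cone of positive definite Gram matrices shows $\sq(\Gamma)$ is full-dimensional in that span. The genuine gap is precisely the step you flag and postpone, namely the identification $I_d(\Gamma)\cdot I_d(\Gamma)=I^2_{2d}(\Gamma)$, and it cannot be settled by a regularity argument valid for every finite $\Gamma$, because for special configurations it is simply not true. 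Take $\Gamma\subset\RR\PP^2$ to be the eight real points in which an irreducible conic $q$ meets a quartic $f$ transversally (eight points on a circle and a generic quartic through them), and $d=3$. Then $I(\Gamma)=(q,f)$, and by B\'ezout every cubic vanishing on $\Gamma$ contains the conic, so $I_3(\Gamma)=q\,H_{3,1}$ and $I_3(\Gamma)\cdot I_3(\Gamma)=q^2H_{3,2}$ has dimension $6$; on the other hand $qf\in I^2_6(\Gamma)$ while $qf\notin q^2H_{3,2}$ since $f\notin(q)$, so $\dim I^2_6(\Gamma)=7$. In particular the generation statement your plan reduces to fails here: $I_4(\Gamma)\not\subseteq H_{3,1}\cdot I_3(\Gamma)=qH_{3,2}$ because $f\notin qH_{3,2}$. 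So the "main obstacle" you identified is not a routine Hilbert-function/regularity verification; in the stated generality it is an actual obstruction, and what your argument proves unconditionally is only $\dim\sq(\Gamma)=\dim\bigl(I_d(\Gamma)\cdot I_d(\Gamma)\bigr)$.

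For comparison, the paper does not do better at this point: its justification is the one sentence that one "can choose a basis of $I^2_{2d}(\Gamma)$ consisting of squares," which is exactly the claim you could not prove, and which the conic–quartic example shows needs a hypothesis on $\Gamma$. The actual verification is supplied later, only in the settings where the proposition is used: Lemma \ref{lem3} gives $(I^2)_{2m}=(I_m)^2$ when $m=\alpha(I)$ (combined with Lemmas \ref{lem4} and \ref{lem5} in the ternary case), and in the proofs of Theorems \ref{thm:ternary} and \ref{thm:Difference4} explicit bases of $I^2_{2d}(\Gamma)$ consisting of pairwise products of degree-$d$ forms are constructed for the configurations at hand; note also that the problematic $\Gamma$ above is not $d$-independent, so the paper's applications are unaffected. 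To complete your proof you should therefore either add such a hypothesis (e.g. $\alpha(I(\Gamma))=d$, or restrict to the $d$-independent sets for which the paper exhibits product bases) and prove the spanning statement there, or else weaken the conclusion to $\dim\sq(\Gamma)=\dim\bigl(I_d(\Gamma)\cdot I_d(\Gamma)\bigr)$, which is what your argument establishes and what the downstream dimension comparisons actually require.
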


As for sums of squares, we pose the question, under which assumptions $P_{n,2d}(\Gamma)$ is a full-dimensional subcone of $I^{(2)}_{2d}(\Gamma)$. In order to answer this question, the following crucial definition is required.

\begin{definition}\label{def:dIndependence}
Let $\Gamma \subset \RR\PP^{n-1}$ be a finite set of distinct points and $I=I(\Gamma) \subset \mathbb{R}[x]$ be the vanishing ideal of $\Gamma$. We call $\Gamma \subset \RR\PP^{n-1}$ \emph{$d$-independent} if $\Gamma$ satisfies the following two conditions:
\begin{align}
&\label{first cond}\text{The forms in $I_d$ share no common zeroes in $\CC\PP^{n-1}$ outside of $\Gamma$. In other words, the}\\
&\nonumber \text{conditions of vanishing at $\Gamma$ force no additional zeroes on forms of degree $d$ in $H_{n,d}$.}\\
&\label{second cond}\text{For any $s \in \Gamma$ the forms that vanish to order 2 on $s$ and vanish at the rest of $\Gamma$}\\
&\nonumber \text{ to order 1 form a vector space of codimension $|\Gamma|+n-1$ in $H_{n,d}$.}&
\end{align}
\end{definition}

The second condition in the above definition simply states that the constraints of vanishing at $\Gamma$ and additionally double vanishing at any point $s \in \Gamma$ are all linearly independent.

The next proposition provides a sufficient condition for full-dimensionality of a face $P_{n,2d}(\Gamma)$ in $I^{(2)}_{2d}(\Gamma)$. 

\begin{prop}\label{prop:posFace}
Let $\Gamma \subset \RR\PP^{n-1}$ be a $d$-independent set. Then $P_{n,2d}(\Gamma)$ is a full-dimensional convex cone in $I^{(2)}_{2d}(\Gamma)$:
$$\dim P_{n,2d}(\Gamma)=\dim I^{(2)}_{2d}(\Gamma).$$
\end{prop}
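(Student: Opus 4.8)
The plan is to produce a single form $p_0$ that lies in the relative interior of the face $\p(\Gamma)$ inside the vector space $I^{(2)}_{2d}(\Gamma)$. Since $\p(\Gamma)\subseteq I^{(2)}_{2d}(\Gamma)$ and $\p(\Gamma)$ is a convex cone, exhibiting such a $p_0$ --- i.e.\ showing that some open neighborhood of $p_0$ in $I^{(2)}_{2d}(\Gamma)$ is contained in $\p(\Gamma)$ --- forces $\p(\Gamma)$ to span $I^{(2)}_{2d}(\Gamma)$, which is exactly the claimed equality of dimensions. I would first isolate three properties of a form $p_0\in H_{n,2d}$ that together suffice: (a) $p_0\ge 0$ on $\rpn$; (b) the real zero set of $p_0$ is exactly $\Gamma$; and (c) for each $s\in\Gamma$, after dehomogenizing $p_0$ in an affine chart centered at $s$, its lowest-order part (which, since $p_0$ vanishes to order $\ge 2$ at $s$, is a quadratic form in the $n-1$ chart variables) is positive definite.

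To build $p_0$, take a basis $q_1,\dots,q_N$ of the space $I_d(\Gamma)$ of degree $d$ forms vanishing on $\Gamma$ and set $p_0=\sum_{i=1}^N q_i^2$. Then (a) is immediate, and $p_0\in I^2_{2d}(\Gamma)\subseteq I^{(2)}_{2d}(\Gamma)$, so $p_0\in\p(\Gamma)$. For (b), the real zeros of $p_0$ are precisely the common real zeros of $q_1,\dots,q_N$, hence of all of $I_d(\Gamma)$; by \eqref{first cond} the common complex zero locus of $I_d(\Gamma)$ is exactly $\Gamma$, so its real points are exactly $\Gamma$. For (c), condition \eqref{second cond} is the key input: the conditions of vanishing at all points of $\Gamma$ form a subfamily of the $|\Gamma|+n-1$ independent conditions of \eqref{second cond}, so they are independent and $\dim I_d(\Gamma)=\dim H_{n,d}-|\Gamma|$, while the subspace of those $q\in I_d(\Gamma)$ whose gradient at $s$ (in the chart) vanishes has dimension $\dim H_{n,d}-(|\Gamma|+n-1)$. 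Hence the linear map $I_d(\Gamma)\to\RR^{n-1}$ sending $q$ to its gradient at $s$ has rank $n-1$, i.e.\ is onto; equivalently, the linear parts of $\bar q_1,\dots,\bar q_N$ at $s$ span the space of linear forms in the chart. Since the quadratic part of $\bar p_0=\sum\bar q_i^2$ at $s$ is the sum of the squares of these linear parts, it is positive definite, giving (c).

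The remaining point is that (a)--(c) make $p_0$ an interior point of $\p(\Gamma)$ in $I^{(2)}_{2d}(\Gamma)$. Let $f\in I^{(2)}_{2d}(\Gamma)$ be close to $p_0$. Since $f$ also vanishes to order $\ge 2$ at every $s\in\Gamma$, in a fixed chart around $s$ its expansion starts with a quadratic form $Q_s(f)$ that depends linearly on $f$ and is close to the positive definite quadratic part of $p_0$; hence for $f$ near $p_0$ this $Q_s(f)$ is still positive definite with least eigenvalue bounded below uniformly, while the degree $\ge 3$ tail of $f$ has uniformly bounded coefficients. A routine estimate then produces a fixed radius $r_0>0$, independent of such $f$ and small enough that the chosen charts around distinct points of $\Gamma$ are disjoint, on which $f\ge 0$. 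Off the union of these finitely many $r_0$-balls --- a compact subset of $\rpn$ on which $p_0$ is strictly positive by (b) --- the form $f$ remains strictly positive once $f$ is close enough to $p_0$. Therefore $f\ge 0$ on $\rpn$ and vanishes on $\Gamma$, so $f\in\p(\Gamma)$. This shows an entire neighborhood of $p_0$ in $I^{(2)}_{2d}(\Gamma)$ lies in $\p(\Gamma)$, and hence $\dim\p(\Gamma)=\dim I^{(2)}_{2d}(\Gamma)$.

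I expect the last paragraph to be the crux: the local nonnegativity estimates near the zeros of $p_0$ must be made uniform over a whole neighborhood of $p_0$, so that all nearby perturbations in $I^{(2)}_{2d}(\Gamma)$ --- not just those in finitely many directions --- remain nonnegative. Positive definiteness (c), which is where \eqref{second cond} enters, is precisely what makes this possible; \eqref{first cond} is used only, via (b), to rule out real zeros of $p_0$ outside $\Gamma$.
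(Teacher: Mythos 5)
Your proposal is correct and follows essentially the same route as the paper: the certificate is the same form $p_0=\sum q_i^2$ built from a basis of $I_d(\Gamma)$, with condition \eqref{first cond} giving $Z(p_0)=\Gamma$ and condition \eqref{second cond} giving positive definiteness of the quadratic part at each $s\in\Gamma$ (the paper's ``roundness'', phrased there via the Hessian restricted to $s^{\perp}$ and Euler's identity rather than in an affine chart). The only difference is the final perturbation step: the paper invokes the extension lemma of \cite{Rez2} (Lemma \ref{ext}) together with convexity of $\p(\Gamma)$, whereas you prove that step directly by uniform local estimates near $\Gamma$ and compactness away from it --- a correct, self-contained substitute that in fact gives the slightly stronger conclusion that $p_0$ is an interior point of $\p(\Gamma)$ inside $I^{(2)}_{2d}(\Gamma)$.
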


Though we have seen that for nonnegative forms the condition for full-dimensionality of $P_{n,2d}(\Gamma)$ in $I^{(2)}_{2d}(\Gamma)$ is not as simple as for full-dimensionality of $\Sigma_{n,2d}(\Gamma)$ in $I^{2}_{2d}(\Gamma)$, it follows from the next two results that for generic $\Gamma$ the dimensions of $P_{n,2d}(\Gamma)$ and $I^{(2)}_{2d}(\Gamma)$ agree.

\begin{cor}\label{cor:Zariski}
The set of $d$-independent configurations of $k$ points in $\RR\PP^{n-1}$ is a Zariski open subset of $(\RR\PP^{n-1})^k$.
\end{cor}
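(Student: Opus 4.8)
The plan is to write the set of $d$-independent configurations as $U_1\cap U_2$, where $U_1,U_2\subseteq(\RR\PP^{n-1})^k$ consist of the tuples satisfying condition (\ref{first cond}), resp.\ (\ref{second cond}), and then to show that $U_2$ and $U_1\cap U_2$ are Zariski open. I would work over $\CC$ and descend at the end: all the loci produced below are cut out by multihomogeneous polynomials with real coefficients, and condition (\ref{first cond}), which concerns the $\CC$-points of the base scheme of the linear system $|I(\Gamma)_d|$, is insensitive to this. Write $N=\dim H_{n,d}$, and for $\Gamma=(s_1,\dots,s_k)$ let $E(\Gamma)$ denote the $k\times N$ matrix of the evaluation functionals $f\mapsto f(s_i)$ on $H_{n,d}$ in the monomial basis; its entries are monomials in the homogeneous coordinates of the $s_i$.

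\emph{Condition (\ref{second cond}).} This is the easy half. For each $i$, let $M_i(\Gamma)$ be the $(k+n-1)\times N$ matrix whose rows are the coefficient vectors of $f\mapsto\partial_j f(s_i)$ for $j=1,\dots,n$ and of $f\mapsto f(s_\ell)$ for $\ell\neq i$; its entries are again monomials in the point coordinates. The codimension named in (\ref{second cond}) equals $\rank M_i(\Gamma)$, which is always $\le k+n-1$; hence (\ref{second cond}) holds iff every $M_i(\Gamma)$ has full row rank, i.e.\ iff for each $i$ some maximal minor of $M_i(\Gamma)$ is nonzero — a Zariski-open condition, so $U_2$ is open. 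Two consequences of lying in $U_2$ will be used. First, Euler's relation $d\,f(s_i)=\sum_j(s_i)_j\,\partial_j f(s_i)$ puts $f\mapsto f(s_i)$ in the span of the $f\mapsto\partial_j f(s_i)$, so the $k$ rows of $E(\Gamma)$ are independent on $U_2$ and $\dim I(\Gamma)_d=N-k$ is \emph{constant} there. Second, for $\Gamma\in U_2$ the forms in $I(\Gamma)_d$ having a double point at $s_i$ form a subspace of codimension $n-1$ in $I(\Gamma)_d$, so the map carrying $f$ to its linear part at $s_i$ sends $I(\Gamma)_d$ onto the full $(n-1)$-dimensional space of such linear parts; by Nakayama, $I(\Gamma)_d$ then generates the local maximal ideal at $s_i$, so the base scheme $Y(\Gamma):=V\!\big(\langle I(\Gamma)_d\rangle\big)\subseteq\CC\PP^{n-1}$ is reduced, of length $1$, at each point of $\Gamma$.

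\emph{Condition (\ref{first cond}).} Here I would argue over $U_2$, where $\rank E(\Gamma)=k$ is constant. Then $x\in\CC\PP^{n-1}$ lies on $Y(\Gamma)$ iff $\rank E(\Gamma\cup\{x\})\le k$, so the incidence set $\mathcal Y:=\{(\Gamma,x)\in U_2\times\CC\PP^{n-1}\mid x\in Y(\Gamma)\}$ is Zariski closed, being cut out by the $(k+1)$-minors of the evaluation matrix of $\Gamma\cup\{x\}$. The projection $\pi\colon\mathcal Y\to U_2$ is proper, so $U_2':=\{\Gamma\in U_2\mid Y(\Gamma)\text{ is finite}\}$ is Zariski open (upper semicontinuity of fibre dimension), and it contains $U_1\cap U_2$ because (\ref{first cond}) forces $Y(\Gamma)$ to be finite. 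Over the finite morphism $\mathcal Y|_{U_2'}\to U_2'$, the fibre length $\ell(\Gamma):=\dim_\CC H^0\!\big(Y(\Gamma),\mathcal O_{Y(\Gamma)}\big)$ is upper semicontinuous. By the second consequence above, $Y(\Gamma)$ contains $\Gamma$ as a reduced subscheme, so $\ell(\Gamma)\ge k$, with equality iff $Y(\Gamma)$ has no point outside $\Gamma$, i.e.\ iff (\ref{first cond}) holds. Hence $\{\Gamma\in U_2'\mid\ell(\Gamma)\le k\}$ is Zariski open in $U_2'$, hence in $(\RR\PP^{n-1})^k$, and it coincides with $U_1\cap U_2$, the set of $d$-independent configurations.

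\emph{The main obstacle.} The delicate point is that condition (\ref{first cond}) is not itself visibly determinantal: under a degeneration of $\Gamma$ an ``extra'' common zero of $I(\Gamma)_d$ may collide into one of the prescribed points, so the image of a naive incidence variety need not be closed. The two structural facts extracted from $U_2$ are exactly what repairs this: constancy of $\dim I(\Gamma)_d$ makes the incidence $\mathcal Y$ closed, and reducedness of $Y(\Gamma)$ along $\Gamma$ makes the single numerical inequality $\ell(\Gamma)\le k$ equivalent to the absence of extra zeros, after which semicontinuity of fibre length finishes the argument. I expect establishing these two inputs — and invoking the standard semicontinuity theorems — to be the only nonroutine step; the rest is bookkeeping with minors.
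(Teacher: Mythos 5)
Your proposal is correct in outline, but it takes a genuinely different route from the paper. The paper deduces Corollary \ref{cor:Zariski} from Proposition \ref{prop:Hilbert}: $d$-independence is equivalent to the Hilbert polynomial of $\RR[x]/\langle I_d(\Gamma)\rangle$ being $|\Gamma|$ (proved via Bertini's theorem), and the explicit bound $\deg(hh')\le (n-1)(d-1)+d$ extracted from that proof shows the condition can be tested by the value of the Hilbert function at one fixed degree, i.e., by a single maximal-rank condition on a matrix whose entries are polynomial in the point coordinates (``vanishing determinants''). You instead work directly with the two conditions of Definition \ref{def:dIndependence}: condition (\ref{second cond}) is visibly open via maximal minors of the matrices $M_i(\Gamma)$, and condition (\ref{first cond}) is handled over that locus by the incidence variety $\mathcal Y$, properness of the projection, and upper semicontinuity of fibre dimension and of fibre length for the resulting finite morphism, with the key observation that condition (\ref{second cond}) plus Euler's identity gives $\rank E(\Gamma)=k$ (so $\mathcal Y$ is determinantally closed) and, via Nakayama, makes the base scheme reduced along $\Gamma$, so that ``no extra zeros'' becomes the single closed-graded inequality $\ell(\Gamma)\le k$. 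Your diagnosis of the real difficulty --- an extra zero colliding into $\Gamma$ under degeneration --- and its repair are exactly right, and the step you leave as bookkeeping does check out: over $U_2$ the bordered $(k+1)$-minors, restricted to a fibre, lie in and span $I_d(\Gamma)$, so the scheme-theoretic fibres of $\mathcal Y$ are $V(\langle I_d(\Gamma)\rangle)$ and the length argument is legitimate; descent from $\CC$ to $\RR$ is harmless since a complex Zariski-closed set meets the real points in a real Zariski-closed set. The trade-off: the paper's route yields an elementary and effective certificate (one explicit degree at which a rank condition detects $d$-independence) at the price of invoking Bertini, while your route avoids the Hilbert-polynomial characterization altogether but leans on Chevalley semicontinuity, Zariski's main theorem/finiteness, and coherence of finite pushforwards.
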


\begin{prop}\label{prop:dIndependent}
Let $\Gamma$ be a generic collection of points in $\RR\PP^{n-1}$ such that $|\Gamma|\leq \binom{n+d-1}{d}-n$. Then $\Gamma$ is $d$-independent.
\end{prop}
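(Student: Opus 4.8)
The plan is to verify the two conditions of Definition~\ref{def:dIndependence} separately, in each case by exhibiting a single configuration for which the relevant constraints are independent and then invoking Corollary~\ref{cor:Zariski}, i.e., using that the locus where the constraints are independent is Zariski open, so that nonemptiness at one point gives genericity. For the first condition \eqref{first cond}, I would argue that a generic set $\Gamma$ of size $k \le \binom{n+d-1}{d}-n$ imposes independent conditions on $H_{n,d}$: this is the classical fact that $k$ generic points impose independent conditions on degree-$d$ forms as long as $k \le \dim H_{n,d} = \binom{n+d-1}{d}$. Hence $\dim I_d(\Gamma) = \binom{n+d-1}{d} - k \ge n$. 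To see that the forms in $I_d(\Gamma)$ have no common zero outside $\Gamma$, I would add a generic extra point $s$ and note that, again by genericity, $\Gamma \cup \{s\}$ still imposes independent conditions (its size is still $\le \binom{n+d-1}{d}$ since $k+1 \le \binom{n+d-1}{d}-n+1 \le \binom{n+d-1}{d}$), so there is a form of degree $d$ vanishing on $\Gamma$ but not at $s$; since $s$ was an arbitrary generic point and the base locus of $I_d(\Gamma)$ is a closed subvariety, this forces the base locus to equal $\Gamma$ (here the bound $k \le \binom{n+d-1}{d} - n$ ensures $\dim I_d \ge n$, so $I_d$ is large enough to cut down the base locus to dimension $0$, and a short dimension count or a separate genericity argument rules out embedded fat points or extra points).

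For the second condition \eqref{second cond}, fix $s \in \Gamma$ and consider the linear conditions on $H_{n,d}$ consisting of: vanishing at each of the $k-1$ other points of $\Gamma$ (that is $k-1$ conditions), plus vanishing to order $2$ at $s$, i.e., $p(s)=0$ and $\nabla p(s)=0$ (that is $n$ conditions, since $\nabla p(s)$ has $n$ components but for a form of degree $d$ these already encode $p(s)=0$ by Euler's identity, so it is exactly $n$ conditions). The total is $k-1+n = |\Gamma|+n-1$ conditions, and the claim is that for generic $\Gamma$ they are linearly independent, i.e., the solution space has codimension exactly $|\Gamma|+n-1$ in $H_{n,d}$. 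This requires $|\Gamma|+n-1 \le \binom{n+d-1}{d}$, which is exactly the hypothesis $|\Gamma|\le \binom{n+d-1}{d}-n$ (giving $|\Gamma| + n - 1 \le \binom{n+d-1}{d} - 1 < \binom{n+d-1}{d}$, with room to spare). Independence of these conditions is again a Zariski-open property in the configuration $(\Gamma, s)$, so it suffices to produce one configuration for which it holds; one can take $s$ and the other points in general position and either cite the known statement that one fat point of multiplicity $2$ together with generic simple points imposes independent conditions in the range below $\dim H_{n,d}$, or construct an explicit witnessing form (e.g., for $s = [1:0:\cdots:0]$, the conditions become: no $x_1^d$ term and no $x_1^{d-1}x_j$ terms, plus vanishing at $k-1$ generic points, and one checks the evaluation functionals at the generic points are independent modulo the monomial constraints).

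Finally, I would combine the two parts: the locus of configurations satisfying \eqref{first cond} is Zariski open and nonempty (dense), the locus satisfying \eqref{second cond} for a fixed labeling of $s$ is Zariski open and nonempty, and intersecting over the finitely many choices of $s \in \Gamma$ keeps it open and dense; hence a generic $\Gamma$ lies in the intersection, which is precisely the set of $d$-independent configurations by Corollary~\ref{cor:Zariski} (which already tells us $d$-independence is an open condition). The main obstacle I anticipate is the base-locus argument in \eqref{first cond}: showing that ``no common zeros outside $\Gamma$'' holds generically is slightly more delicate than mere independence of conditions, because one must rule out the forms in $I_d$ all passing through some spurious point, and the cleanest route is probably the incidence-variety / generic-smoothness argument sketched above rather than an explicit construction. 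The bound $|\Gamma| \le \binom{n+d-1}{d} - n$ is used precisely to guarantee $\dim I_d(\Gamma) \ge n$, which is what makes the base locus finite (and then equal to $\Gamma$) for generic $\Gamma$.
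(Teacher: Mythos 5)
Your handling of condition \eqref{second cond} is essentially fine and completable (a double point at $e_1$ kills the coefficients of $x_1^d$ and $x_1^{d-1}x_j$, and generic simple points then impose independent conditions one at a time as long as the system stays nonzero, which the bound guarantees). The genuine gap is in condition \eqref{first cond}, and it is exactly the point the paper's proof is built to handle. Showing that a generic additional point $s$ is not a common zero of $I_d(\Gamma)$ only proves that the base locus of $I_d(\Gamma)$ is a \emph{proper} closed subvariety; a generic point never detects the extra components that condition \eqref{first cond} must exclude (a curve through the points, or finitely many spurious base points, e.g.\ the ninth base point of a pencil of cubics). The assertion that $\dim I_d(\Gamma)\geq n$ ``is large enough to cut the base locus down to dimension $0$,'' together with ``a short dimension count rules out extra points,'' is precisely the statement that needs proof; at the extreme cardinality $|\Gamma|=\binom{n+d-1}{d}-n$ the system has exactly the minimal dimension $n$ for which this can hold, so there is no slack for a soft count, and the incidence-variety/Bertini argument you defer to is never actually given. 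Note also that condition \eqref{first cond} is a statement about common zeroes in $\CC\PP^{n-1}$, so the ``generic real extra point'' device would in any case have to be upgraded to exclude complex base points. You yourself identify this as the main obstacle, which is an accurate self-assessment: without it the proposal does not prove the proposition.

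The paper closes this gap not by a genericity argument but by an explicit witness, which is then fed into Corollary~\ref{cor:Zariski} exactly as in your framework. It takes $S_{n,d}\subset\RR\PP^{n-1}$, the points given by exponent vectors of degree-$d$ monomials with at least two nonzero parts, so $|S_{n,d}|=\binom{n+d-1}{d}-n$, and shows that the $n$ forms $Q_i=\prod_{k=0}^{d-1}(dx_i-kM)$, with $M=x_1+\cdots+x_n$, are a basis of $I_d(S_{n,d})$ (using the classical fact that no nonzero form of degree $d$ vanishes on all of $\bar S_{n,d}$). Because this basis factors into linear forms, both conditions can be checked by hand: the common zero set of $Q_1,\dots,Q_n$, even over $\CC$, is computed to be exactly $S_{n,d}$, and for each $s\in S_{n,d}$ the forms of $I_d(S_{n,d})$ singular at $s$ form a one-dimensional space, via a rank computation for a matrix of the form $dI-C$ with $\rank C=1$. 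If you wish to avoid an explicit configuration, you would have to carry out a genuine base-locus argument over the space of configurations (an incidence-variety dimension count or an induction maintaining control of the base locus as points are added), which is substantially more work than your sketch suggests; as written, the key half of the proposition remains unproved.
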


In view of Propositions \ref{prop:sosFace}, \ref{prop:posFace} and \ref{prop:dIndependent} our original question of finding a dimensional difference between the faces $\p(\Gamma)$ and $\sq(\Gamma)$ can be reduced to the following:

\begin{question}
Let $\Gamma \subset \RR\PP^{n-1}$ (or equivalently $\CC\PP^{n-1}$) be a generic set of points such that $|\Gamma|\leq \binom{n+d-1}{d}-n$, and let $I(\Gamma)$ be the vanishing ideal of $\Gamma$. For what values of $|\Gamma|$ do we have $$I^{(2)}_{2d}(\Gamma)=I_{2d}^{2}(\Gamma)?$$
\end{question}
Indeed, for $n=3$, we show that this is true for all $d$-independent sets up to a certain size.

\begin{theorem}\label{thm:ternary}
Let $\Gamma$ be a $d$-independent set of points in $\RR\PP^2$ such that $|\Gamma| \leq \binom{d+1}{2}$. Then $$\dim I^{(2)}_{2d}(\Gamma)=\dim I^2_{2d}(\Gamma).$$
Moreover, if $\binom{d+1}{2} + 1\leq |\Gamma| \leq \binom{d+1}{2} + (d-2)$, then $\dim I^{(2)}_{2d}(\Gamma) > \dim I^2_{2d}(\Gamma)$.
\end{theorem}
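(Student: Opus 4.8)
The plan is to compare the Hilbert functions of the two ideals in degree $2d$ and exploit the identity $I^{(2)}(\Gamma) = I^2(\Gamma)$ failing only ``by the obstruction coming from the conormal module.'' Concretely, for a reduced set of points $\Gamma$ in $\mathbb{P}^2$ one has a graded inclusion $I^2(\Gamma) \subseteq I^{(2)}(\Gamma)$, and the quotient $I^{(2)}(\Gamma)/I^2(\Gamma)$ is a module supported at the points; its degree-$2d$ piece measures exactly the dimensional gap. So the first step is to write $\dim I^{(2)}_{2d}(\Gamma) - \dim I^2_{2d}(\Gamma) = \dim \big( I^{(2)}(\Gamma)/I^2(\Gamma)\big)_{2d}$ and reduce to computing this for generic (equivalently, $d$-independent) $\Gamma$ of the stated size.

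For the first assertion (equality when $|\Gamma| \le \binom{d+1}{2}$), I would proceed by a dimension count on both sides. Since $\Gamma$ is $d$-independent, condition \eqref{first cond} says the forms of $I_d(\Gamma)$ have no base points outside $\Gamma$, and since $|\Gamma| \le \binom{d+1}{2} = \dim H_{3,d} - d$ is small, the linear system $I_d(\Gamma)$ is large and base-point-free. The key is then the classical fact (a Bertini/Castelnuovo-type argument, or an explicit use of the Koszul syzygies) that when $I_d(\Gamma)$ is base-point-free and generated in degree $d$, the products $I_d \cdot I_d$ already fill up $I^{(2)}_{2d}(\Gamma)$: any form vanishing doubly on $\Gamma$ can be written, modulo the square of the ideal, as a combination forced to vanish, and genericity kills the obstruction. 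I would make this precise by choosing two general forms $f, g \in I_d(\Gamma)$ defining a complete intersection of $d^2$ points containing $\Gamma$ (so $\Gamma$ is a subscheme of a zero-dimensional complete intersection), and then use liaison: the residual scheme $\Gamma'$ has $d^2 - |\Gamma| \ge d^2 - \binom{d+1}{2} = \binom{d+1}{2}$ points, and one shows $I^{(2)}_{2d}(\Gamma)$ is generated by $f \cdot I_d(\Gamma) + g \cdot I_d(\Gamma)$ plus $f^2, fg, g^2$, which visibly lies in $I^2_{2d}(\Gamma)$. Comparing this with the lower bound $\dim I^2_{2d}(\Gamma) \ge \dim(f \cdot I_d + g \cdot I_d) $ forces equality.

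For the second assertion (strict inequality when $\binom{d+1}{2} + 1 \le |\Gamma| \le \binom{d+1}{2} + (d-2)$), I would exhibit a specific form in $I^{(2)}_{2d}(\Gamma) \setminus I^2_{2d}(\Gamma)$ for generic $\Gamma$, again via liaison. With $|\Gamma|$ slightly above $\binom{d+1}{2}$, a general pencil in $I_d(\Gamma)$ still exists (as long as $|\Gamma| \le \binom{d+1}{2} + d - 1 = \dim H_{3,d}-1$, the space $I_d(\Gamma)$ is at least $1$-dimensional; in the stated range it is at least $2$-dimensional so a general complete intersection $f, g$ of degree $d$ containing $\Gamma$ exists). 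Now the residual scheme $\Gamma'$ to $\Gamma$ in this complete intersection has only $d^2 - |\Gamma| \le \binom{d+1}{2} - 1 = \dim H_{3,d} - d - 1$ points, so $I_{d}(\Gamma')$ has strictly larger dimension than expected relative to $\Gamma$. A form of the shape $h \cdot f - \ell \cdot (\text{something in } I(\Gamma))$ with $h \in I_d(\Gamma')$ chosen so that $h f$ vanishes to order $2$ along $\Gamma$ (using $f \in I(\Gamma)$ and that $h$ passes through the points of $\Gamma$ that $f$ and $g$ force) produces an element of $I^{(2)}_{2d}(\Gamma)$; a parameter count shows it cannot lie in $I^2_{2d}(\Gamma)$ because $\dim I^2_{2d}(\Gamma)$ is bounded by the image of the (now non-surjective) multiplication map $\mathrm{Sym}^2 I_d(\Gamma) \to H_{3,2d}$, whose rank is strictly smaller than $\dim I^{(2)}_{2d}(\Gamma)$ in this range. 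The bookkeeping here — computing $\dim I^{(2)}_{2d}(\Gamma)$ exactly using $d$-independence (which gives $\dim I^{(2)}_{2d}(\Gamma) = \binom{2d+2}{2} - 3|\Gamma|$ as long as this is the ``expected'' value, i.e. the double-point conditions are independent) and bounding $\dim I^2_{2d}(\Gamma)$ from above — is the crux.

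The main obstacle I anticipate is the upper bound on $\dim I^2_{2d}(\Gamma)$: showing that the multiplication map $\mathrm{Sym}^2 I_d(\Gamma) \to H_{3,2d}$ does not have full rank in the critical range, equivalently that there are ``unexpected'' syzygies among a generic pencil/net of degree-$d$ curves through $\Gamma$. I expect this to follow from a Hilbert-Burch or minimal free resolution analysis of $I(\Gamma)$ for generic $\Gamma$ of the given cardinality (the resolution is determined by the Hilbert function in the generic case), combined with a careful reading of which Koszul and non-Koszul syzygies contribute to $I^2$ in degree $2d$; the quantity $d-2$ in the upper bound on $|\Gamma|$ should emerge precisely as the range where the first ``extra'' generator of $I(\Gamma)$ in degree $d+1$ or a degree-$(d,d)$ syzygy creates a gap, and it should match the explicit computation already carried out in the proof of Proposition \ref{prop:posFace} and the genericity statements of Corollary \ref{cor:Zariski} and Proposition \ref{prop:dIndependent}.
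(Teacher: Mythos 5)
The step on which your first (and main) assertion rests does not survive a dimension count. You claim that for a general complete--intersection pair $f,g\in I_d(\Gamma)$ the space $I^{(2)}_{2d}(\Gamma)$ is generated by $f\cdot I_d(\Gamma)+g\cdot I_d(\Gamma)$ together with $f^2,fg,g^2$. Since $f,g\in I_d(\Gamma)$, that span has dimension at most $2\dim I_d(\Gamma)-1=2(d+k)+1$ (writing $|\Gamma|=\binom{d+1}{2}-k$), whereas $\dim I^{(2)}_{2d}(\Gamma)\geq \binom{2d+2}{2}-3|\Gamma|=\binom{d+2}{2}+3k$, which is quadratic in $d$; already for $|\Gamma|=\binom{d+1}{2}$ and $d\geq 3$ you would need $\binom{d+2}{2}\leq 2d+1$. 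What must actually be shown is that \emph{all} pairwise products of a basis of $I_d(\Gamma)$ span $I^{(2)}_{2d}(\Gamma)$, and your fallback --- the ``classical fact'' that base-point-freeness of $I_d(\Gamma)$ makes $(I_d)^2$ fill up $I^{(2)}_{2d}$ --- is precisely the statement to be proven; no argument is offered, and it is not a standard result. The paper gets this input from commutative algebra: $I^{(2)}$ is the saturation of $I^2$ (Lemma \ref{lem1}), $(I^2)_{2d}=(I_d)^2$ when $\alpha(I)=d$ (Lemma \ref{lem3}), $\satdeg(I^2)\leq 2\alpha(I)$ whenever $\alpha(I)=\reg(I)$ (Lemma \ref{lem4}), and $\reg(I(\Gamma))=d$ for $\binom{d+1}{2}$ points in general position (Lemma \ref{lem5}, via Lorenzini); this settles the extremal case $|\Gamma|=\binom{d+1}{2}$, and smaller $d$-independent sets are handled by a downward induction that, upon passing from $T=\Gamma\cup\{p\}$ to $\Gamma$, explicitly extends a basis of $I^2_{2d}(T)$ by three new products $Q_lQ,\,Q_mQ,\,Q^2$ and checks linear independence using derivatives at $p$ and $d$-independence. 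Nothing in your sketch supplies the regularity input (or any substitute), which is exactly where the hypothesis $|\Gamma|\leq\binom{d+1}{2}$ enters.

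For the second assertion your plan is far heavier than necessary, and the ``main obstacle'' you anticipate is not what is needed: you do not have to exhibit a form in $I^{(2)}_{2d}(\Gamma)\setminus I^2_{2d}(\Gamma)$, nor show that $\mathrm{Sym}^2 I_d(\Gamma)\to H_{3,2d}$ fails to have full rank, nor find unexpected syzygies. The paper's argument is the naive count you mention but never carry out: $\dim I^{(2)}_{2d}(\Gamma)\geq\binom{2d+2}{2}-3|\Gamma|$ always, while $\dim I^2_{2d}(\Gamma)\leq\binom{\binom{d+2}{2}-|\Gamma|+1}{2}$, the number of pairwise products of a basis of $I_d(\Gamma)$ (whose dimension is pinned down by $d$-independence). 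Writing $|\Gamma|=\binom{d+1}{2}+j$, the difference of these two bounds is $j(2d-3-j)/2>0$ throughout $1\leq j\leq d-2$, so the gap is forced even if the multiplication map is injective; the upper limit $d-2$ is simply the maximal size of a $d$-independent set in $\RR\PP^2$, not the onset of some special syzygy. So your proposal, as written, has a genuine gap in part one and an unnecessary detour (with the bookkeeping left undone) in part two.
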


As already explained, the next corollary is an immediate consequence of Theorem \ref{thm:ternary}. 

\begin{cor}
\label{cor:ternary}
Let $\Gamma\subset \RR\PP^2$ be $d$-independent with $|\Gamma| \leq \binom{d+1}{2}$. Then $$\dim P_{3,2d}(\Gamma)=\dim \Sigma_{3,2d}(\Gamma).$$
Furthermore, for $\binom{d+1}{2} + 1\leq |\Gamma| \leq \binom{d+1}{2} + (d-2)$ we have
$$\dim P_{3,2d}(\Gamma) > \dim \Sigma_{3,2d}(\Gamma).$$
\end{cor}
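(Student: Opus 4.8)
The plan is to show that both assertions reduce to a single statement about a multiplication map, that the strict inequality is then an easy count, and that the equality comes down to a classical fact about line bundles on a plane curve. Write $I=I(\Gamma)$ and $k=|\Gamma|$. Since $I^2\subseteq I^{(2)}$ we always have $\dim I^2_{2d}\le\dim I^{(2)}_{2d}$, so the content of both parts is to decide when equality holds. A direct check using $d$-independence --- condition \eqref{second cond} of Definition~\ref{def:dIndependence} forces $\Gamma$ to impose $k$ independent conditions in degree $d$, whence $H^1(\mathbb P^2,\mathcal I_\Gamma(d))=0$, while condition \eqref{first cond} pins down the behavior of $I$ up to degree $d$ --- shows that $I^2_{2d}$ is exactly the image of the multiplication map $\mu\colon\operatorname{Sym}^2(I_d)\to H_{3,2d}$, and this image lies inside $I^{(2)}_{2d}$ because a product of two forms vanishing on $\Gamma$ is singular along $\Gamma$. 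So the theorem becomes the assertion that, within the stated range of $k$, the map $\mu$ is surjective onto $I^{(2)}_{2d}$ if and only if $k\le\binom{d+1}{2}$.

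The ``moreover'' part is a count. Write $k=\binom{d+1}{2}+j$ with $1\le j\le d-2$; then $\dim I_d=\binom{d+2}{2}-k=(d+1)-j$, so $\dim I^2_{2d}=\dim\operatorname{im}\mu\le\binom{\dim I_d+1}{2}=\binom{d-j+2}{2}$. On the other hand $I^{(2)}_{2d}$ is cut out by at most $3k$ linear equations, hence $\dim I^{(2)}_{2d}\ge\binom{2d+2}{2}-3k=\binom{d+2}{2}-3j$, the last equality being elementary. Since $\binom{d+2}{2}-\binom{d-j+2}{2}=\tfrac12 j(2d-j+3)>3j$ for all $j\le d-2$, we conclude $\dim I^2_{2d}<\dim I^{(2)}_{2d}$.

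For the equality ($k\le\binom{d+1}{2}$) I would pass to a curve. A general $C\in I_d$ is, by $d$-independence, a smooth plane curve of degree $d$; since the points of $\Gamma$ are local complete intersections one has $\mathcal I_\Gamma^2=\mathcal I_{2\Gamma}$ as sheaves, and multiplication by the equation of $C$ gives two short exact sequences
\[
0\to\mathcal O_{\mathbb P^2}\xrightarrow{\ \cdot C\ }\mathcal I_\Gamma(d)\to\iota_*\mathcal L\to0,\qquad 0\to\mathcal I_\Gamma(d)\xrightarrow{\ \cdot C\ }\mathcal I_\Gamma^2(2d)\to\iota_*\mathcal L^{\otimes2}\to0,
\]
with $\iota\colon C\hookrightarrow\mathbb P^2$ and $\mathcal L=\mathcal O_C(d)(-\Gamma|_C)$ a line bundle of degree $d^2-k$ on $C$. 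Taking global sections and using $H^1(\mathcal I_\Gamma(d))=0$ yields surjections $I_d\twoheadrightarrow H^0(C,\mathcal L)$ with kernel $\langle C\rangle$ and $I^{(2)}_{2d}\twoheadrightarrow H^0(C,\mathcal L^{\otimes2})$ with kernel $C\cdot I_d$. As $C\cdot I_d\subseteq\operatorname{im}\mu$ and $\operatorname{im}\mu$ maps onto $H^0(C,\mathcal L)\cdot H^0(C,\mathcal L)\subseteq H^0(C,\mathcal L^{\otimes2})$, a short diagram chase gives
\[
I^2_{2d}=I^{(2)}_{2d}\iff\operatorname{Sym}^2 H^0(C,\mathcal L)\twoheadrightarrow H^0(C,\mathcal L^{\otimes2}).
\]

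This last statement on $C$ is where the difficulty concentrates, and it is the step I expect to be the main obstacle. Here $\mathcal L$ is a nonspecial bundle of degree $d^2-k\ge\binom{d}{2}$ on a smooth plane curve of genus $g=\binom{d-1}{2}$. When $k\le 3d-3$ --- which is the whole range $k\le\binom{d+1}{2}$ precisely when $d\le 3$ --- one has $\deg\mathcal L\ge 2g+1$, so $\mathcal L$ is normally generated (Castelnuovo--Mumford) and the multiplication map is onto. The genuinely delicate case is $3d-2\le k\le\binom{d+1}{2}$, which can only happen for $d\ge4$ and forces $\deg\mathcal L\le 2g$; there one has to exploit the special shape $\mathcal L=\mathcal O_C(d)(-D)$ with $D$ a general effective divisor --- for instance by a residuation argument descending from the normal generation of $\mathcal O_C(d)$ (whose degree $d^2$ exceeds $2g+1$), or by a Green--Lazarsfeld-type vanishing exploiting the special geometry of $C$ as a plane curve (its Clifford index is only $d-4$ for $d\ge5$). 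An alternative to the whole curve reduction would be to degenerate $\Gamma$ to a configuration on a line or a conic for which $\dim\operatorname{im}\mu$ is directly computable, and conclude by semicontinuity; on any route I would expect the surjectivity of the degree-$2d$ multiplication map for $d\ge4$ to be the crux.
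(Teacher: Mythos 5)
Your argument does not close the first (and main) assertion, and the gap is exactly where you say you expect it. After the curve-theoretic reduction, the equality $\dim P_{3,2d}(\Gamma)=\dim\Sigma_{3,2d}(\Gamma)$ for $|\Gamma|=k\le\binom{d+1}{2}$ hinges on surjectivity of $\operatorname{Sym}^2H^0(C,\mathcal{L})\to H^0(C,\mathcal{L}^{\otimes 2})$ with $\mathcal{L}=\mathcal{O}_C(d)(-\Gamma)$ of degree $d^2-k$ on a smooth plane curve of genus $g=\binom{d-1}{2}$. Your normal-generation argument covers only $\deg\mathcal{L}\ge 2g+1$, i.e.\ $k\le 3d-3$, which exhausts the range $k\le\binom{d+1}{2}$ only for $d\le 3$. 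For $d\ge 4$ and $3d-2\le k\le\binom{d+1}{2}$ you offer candidate strategies (residuation from $\mathcal{O}_C(d)$, Green--Lazarsfeld-type vanishing, degeneration plus semicontinuity) but carry none of them out; since this is precisely the heart of the statement, the proposal is an outline with an unproved core rather than a proof. Two smaller points: the identification of the face dimensions with $\dim I^{2}_{2d}(\Gamma)$ and $\dim I^{(2)}_{2d}(\Gamma)$ is not a ``direct check'' but is Propositions \ref{prop:sosFace} and \ref{prop:posFace} (the latter via the round sum-of-squares certificate), which you should cite; and your identification of $I^2_{2d}$ with the image of $\operatorname{Sym}^2(I_d)$ is asserted, not proved (it needs something like $\alpha(I)=d$, cf.\ Lemma \ref{lem3}) --- though for the strict-inequality count this matches the bound the paper itself uses, and that count is otherwise correct and agrees with the paper's.

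For comparison, the paper proves Corollary \ref{cor:ternary} in one line: combine Propositions \ref{prop:sosFace} and \ref{prop:posFace} (the latter applies because $\Gamma$ is $d$-independent) with Theorem \ref{thm:ternary}. The content you are missing is exactly Theorem \ref{thm:ternary}, and the paper's route avoids the low-degree line-bundle question entirely: for the extremal case $|\Gamma|=\binom{d+1}{2}$ it uses $\overline{I^m}=I^{(m)}$ (Lemma \ref{lem1}), $(I^2)_{2\alpha}=(I_\alpha)^2$ (Lemma \ref{lem3}), $\satdeg(I^2)\le 2\alpha(I)$ when $\alpha(I)=\reg(I)$ (Lemma \ref{lem4}), and $\reg=d$ for $\binom{d+1}{2}$ points in general linear position (Lemma \ref{lem5}) to get $I^2_{2d}=I^{(2)}_{2d}$, and then runs a downward induction removing one point at a time, exhibiting three new products $Q_lQ$, $Q_mQ$, $Q^2$ that extend a basis of $I^2_{2d}$ and match the growth of $\dim I^{(2)}_{2d}$. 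To salvage your approach you would have to actually prove the multiplication-map surjectivity for $\mathcal{L}=\mathcal{O}_C(d)(-\Gamma)$ in the range $\deg\mathcal{L}\le 2g$ (using that $\Gamma$ is general/d-independent, since it is false for arbitrary line bundles of such degrees), or else switch to an induction of the paper's type.
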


Similarly, for $n=4$, we derive the minimal size of a $d$-independent set $\Gamma$ such that the dimensions of the ideals  $I_4^2(\Gamma)$ and $I_4^{(2)}(\Gamma)$ are distinct. We will prove that if $\Gamma \subset \RR\PP^3$ with $|\Gamma|\leq 6$ is in general linear position, then it is $d$-independent.

\begin{theorem}\label{thm:diff4}
Let $\Gamma\subset \RR\PP^3$ be a finite set in general linear position. Then the following hold:
\begin{itemize}
 \item[(i)] If $|\Gamma|=6$, then 
\begin{equation*}
             \dim I_4^2(\Gamma)=10<11=\dim I_4^{(2)}(\Gamma).
            \end{equation*}
 \item[(ii)] If $|\Gamma|\leq 5$, then
\begin{equation*}
             \dim I_4^2(\Gamma)=\dim I_4^{(2)}(\Gamma).
            \end{equation*}
\end{itemize}
\end{theorem}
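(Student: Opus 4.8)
The plan is to pass to the blow-up $\pi\colon Y\to\PP^3$ of $\PP^3$ along $\Gamma$, with exceptional divisors $E_1,\dots,E_k$ ($k=|\Gamma|$), pulled-back hyperplane class $H$, and the divisor $L:=2H-\sum_iE_i$; I work over $\CC$, which changes nothing since all spaces in sight are cut out by linear conditions with real coefficients. One has $\pi_*\mathcal O_Y(L)=\mathcal I_\Gamma(2)$ and $\pi_*\mathcal O_Y(2L)=\mathcal I_{\Gamma}^{(2)}(4)$, hence $H^0(Y,L)\cong I_2(\Gamma)$ and $H^0(Y,2L)\cong I^{(2)}_4(\Gamma)$, and the image of the multiplication map $\mu\colon\mathrm{Sym}^2H^0(Y,L)\to H^0(Y,2L)$ is $I_2(\Gamma)\cdot I_2(\Gamma)$, squeezed as $I_2(\Gamma)\cdot I_2(\Gamma)\subseteq I^2_4(\Gamma)\subseteq I^{(2)}_4(\Gamma)$ (with equality on the left when $k\ge 4$, since then $I_1(\Gamma)=0$). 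Thus the theorem reduces to: (a) $\dim H^0(Y,2L)=35-4k$; and (b) $\mu$ is surjective when $k\le 5$, while it has a one-dimensional cokernel when $k=6$. Note that (b) for $k=6$ together with $\dim I_2(\Gamma)=10-6=4$ already forces $\dim I^2_4(\Gamma)\le\binom{4+1}{2}=10$, so (a) then pins down both dimensions.

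The geometric input is that, for $\Gamma$ in general linear position with $k\le 6$, the divisor $L$ is nef and big: $L^3=8-k>0$, and $L\cdot C\ge 0$ for every integral curve $C\subset Y$, with equality only for finitely many rational curves (strict transforms of bisecant lines, together with the twisted cubic through all six points when $k=6$). Moreover $-K_Y=4H-2\sum_iE_i=2L$, so $Y$ is a weak del Pezzo threefold of degree $8-k$. For (a): write $2L=K_Y+4L$ with $4L$ nef and big, so $H^i(Y,2L)=0$ for $i>0$ by Kawamata–Viehweg vanishing and $\dim H^0(Y,2L)=\chi(Y,2L)$. Riemann–Roch on $Y$, using $\chi(\mathcal O_Y)=1$, $-K_Y=2L$ and $L\cdot c_2(Y)=12$, gives $\chi(Y,2L)=4L^3+3=35-4k$ (alternatively, cite Alexander–Hirschowitz: double points impose independent conditions on quartics in $\PP^3$ unless $k=9$). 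In particular $\dim I^{(2)}_4(\Gamma)=11$ when $k=6$ and $=15$ when $k=5$.

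For (b), when $k\le 5$ one has $L^3=8-k\ge 3$ and $|L|$ defines a birational morphism $g\colon Y\to\overline Y\subset\PP^{9-k}$ onto a normal del Pezzo threefold $\overline Y$ of degree $8-k$ — a cubic hypersurface in $\PP^4$ for $k=5$, a complete intersection of two quadrics in $\PP^5$ for $k=4$, and projectively normal in all cases. Since $\overline Y$ is normal, $g_*\mathcal O_Y=\mathcal O_{\overline Y}$, hence $H^0(Y,2L)=H^0(\overline Y,\mathcal O_{\overline Y}(2))$, and projective normality gives $\mathrm{Sym}^2H^0(Y,L)=\mathrm{Sym}^2H^0(\PP^{9-k},\mathcal O(1))\twoheadrightarrow H^0(\overline Y,\mathcal O_{\overline Y}(2))=H^0(Y,2L)$; so $I_2(\Gamma)\cdot I_2(\Gamma)=I^{(2)}_4(\Gamma)$ and squeezing proves part~(ii). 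When $k=6$, $L^3=2$ and $|L|$ is base-point free but not birational: it factors as $Y\to\overline Y\xrightarrow{\psi}\PP^3$ with $\overline Y$ a normal del Pezzo threefold of degree $2$ and $\psi$ finite of degree $2$, a double cover branched along a quartic surface — classically the Weddle quartic through $\Gamma$. Then $\psi_*\mathcal O_{\overline Y}=\mathcal O_{\PP^3}\oplus\mathcal O_{\PP^3}(-2)$, so $H^0(Y,2L)=H^0(\PP^3,\mathcal O(2))\oplus H^0(\PP^3,\mathcal O)$ has dimension $10+1=11$, while $\mathrm{Sym}^2H^0(Y,L)=\mathrm{Sym}^2H^0(\PP^3,\mathcal O(1))$ surjects onto the first summand and misses the second. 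Hence $\dim I^2_4(\Gamma)=\dim\bigl(I_2(\Gamma)\cdot I_2(\Gamma)\bigr)=10<11$, proving part~(i); the ``extra'' element of $I^{(2)}_4(\Gamma)$ is a square root of the branch divisor.

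The step I expect to be the main obstacle is showing that $L$ is nef — and hence $Y$ a weak del Pezzo — for \emph{every} configuration $\Gamma$ in general linear position, not merely the generic one: one must rule out an integral curve $C$ of degree $e$ through the points with multiplicities $m_i$ satisfying $\sum_i m_i>2e$, which uses that general linear position forbids three collinear and four coplanar points and, for the higher-degree curves, Castelnuovo-type genus bounds. If one prefers to avoid del Pezzo geometry altogether, the cases can be done by hand instead: for $k\le 4$ the points are projectively equivalent to a subset of the coordinate points, $I(\Gamma)$ is monomial, and $I^2_4(\Gamma)=I^{(2)}_4(\Gamma)$ follows from a monomial count; for $k=5$ there is a single configuration up to $\mathrm{PGL}_4$, so one writes down an explicit basis of the $5$-dimensional space $I_2(\Gamma)$ and checks that its $15$ pairwise products are linearly independent; and for $k=6$ one fixes five points, lets the sixth vary, and checks that a suitable $10\times 10$ minor of the multiplication matrix is a nonzero polynomial on the general-linear-position locus.
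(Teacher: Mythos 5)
Your route is genuinely different from the paper's. The paper stays entirely inside elementary linear algebra: it builds an explicit factoring basis $Q_i=\ip{x}{u_i}\ip{x}{v_i}$ of $I_2(\Gamma)$ from a combinatorial double covering of the six points by four planes, checks by hand (via the dual bases $u_i^*,v_i^*$) that the ten products $Q_iQ_j$ are independent, proves $2$-independence with the same data, and for $|\Gamma|\le 5$ extends the basis and repeats the check; the upper bound $\dim I^{(2)}_4(\Gamma)=11$ comes from the independent-conditions count. Your blow-up picture buys a conceptual explanation that the paper does not give: the gap for $k=6$ is the second summand of $\psi_*\mathcal O_{\overline Y}=\mathcal O\oplus\mathcal O(-2)$, i.e.\ the square root of the branch quartic of the anticanonical double cover (this is essentially the form $R$ of Section \ref{Sec:1dim}), and Kawamata--Viehweg plus Riemann--Roch replaces the conditions count. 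Your arithmetic ($\chi(2L)=4L^3+3=35-4k$, the sandwich $I_2(\Gamma)\cdot I_2(\Gamma)\subseteq I^2_4(\Gamma)\subseteq I^{(2)}_4(\Gamma)$, and $I^2_4=I_2\cdot I_2$ once $I_1(\Gamma)=0$) is correct.

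However, as written there are genuine gaps, and the one you flag is exactly the hard content of the paper. First, your later steps need more than nefness: the Stein factorization and double-cover analysis for $k=6$, and the birational anticanonical model for $k\le 5$, presuppose that $|L|$ is base-point free (and that $h^0(L)=10-k$) for \emph{every} configuration in general linear position. This is precisely the paper's $2$-independence statement (Proposition \ref{prop:2-Independence}): the quadrics through $\Gamma$ have no common zero outside $\Gamma$ and their differentials at each $s_i$ span $s_i^{\perp}$. You neither prove it nor reduce to it; note that base-point freeness also gives nefness for free (bigness is $L^3=8-k>0$), so the genus-bound route you sketch is both incomplete and not the efficient target. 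Second, the structure results you invoke -- projective normality of $\overline Y$ and, in degree $2$, $\psi_*\mathcal O_{\overline Y}=\mathcal O\oplus\mathcal O(-2)$ -- are classical for smooth del Pezzo threefolds, but here $L$ is never ample (the bisecant lines, and for $k=6$ the twisted cubic through $\Gamma$, are $L$-trivial), so $\overline Y$ has canonical Gorenstein singularities and these claims need a citation or an argument; for $k=6$ a direct fix is available ($\psi$ is finite of degree $L^3=2$ onto $\PP^3$, $\psi_*\mathcal O_{\overline Y}=\mathcal O\oplus\mathcal O(-m)$, and $h^0(2L)=11$ forces $m=2$), and for $k\le 5$ general linear position determines $\Gamma$ up to projective equivalence, so you may simply exhibit the Segre cubic, the $(2,2)$ complete intersection, etc. Third, your fallback for $k=6$ -- showing a $10\times 10$ minor is a nonzero polynomial on the general-linear-position locus -- only proves the statement for generic $\Gamma$, whereas the theorem asserts it for every $\Gamma$ with no four points coplanar; the same caveat applies if you replace the vanishing-theorem computation of $\dim I^{(2)}_4$ by Alexander--Hirschowitz, which is likewise a generic statement.
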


As a direct consequence, we obtain the following corollary.

\begin{cor}\label{cor:diff4}
Let $\Gamma\subset \RR\PP^3$ be a finite set in general linear position. Then the following hold:
\begin{itemize}
 \item[(i)] If $|\Gamma|=6$, then 
\begin{equation*}
             \dim \Sigma_{4,4}(\Gamma)=10<11=\dim P_{4,4}(\Gamma).
            \end{equation*}
 \item[(ii)] If $|\Gamma|\leq 5$, then
\begin{equation*}
             \dim \Sigma_{4,4}(\Gamma)=\dim P_{4,4}(\Gamma).
            \end{equation*}
\end{itemize}
\end{cor}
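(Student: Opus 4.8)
The plan is to deduce Corollary \ref{cor:diff4} directly from Theorem \ref{thm:diff4}, translating the statements about the ideal components $I_4^2(\Gamma)$ and $I_4^{(2)}(\Gamma)$ into statements about the faces $\Sigma_{4,4}(\Gamma)$ and $P_{4,4}(\Gamma)$. Here $2d = 4$, so $d = 2$, and throughout $\Gamma \subset \RR\PP^3$ is a finite set in general linear position with $|\Gamma| \leq 6$.

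First I would invoke Proposition \ref{prop:sosFace}, which applies to an arbitrary finite set, to obtain $\dim \Sigma_{4,4}(\Gamma) = \dim I_4^2(\Gamma)$. To get the matching identity on the nonnegative side, I would use the fact (established just before Theorem \ref{thm:diff4}) that a set of at most $6$ points in general linear position in $\RR\PP^3$ is $d$-independent in the sense of Definition \ref{def:dIndependence} with $d = 2$; this is consistent with the range in Proposition \ref{prop:dIndependent}, since $|\Gamma| \leq 6 = \binom{4+2-1}{2} - 4$. Applying Proposition \ref{prop:posFace} to this $d$-independent set then yields $\dim P_{4,4}(\Gamma) = \dim I_4^{(2)}(\Gamma)$.

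With these two identities in hand the corollary is immediate. In case (i), when $|\Gamma| = 6$, Theorem \ref{thm:diff4}(i) gives $\dim I_4^2(\Gamma) = 10 < 11 = \dim I_4^{(2)}(\Gamma)$, hence $\dim \Sigma_{4,4}(\Gamma) = 10 < 11 = \dim P_{4,4}(\Gamma)$. In case (ii), when $|\Gamma| \leq 5$, Theorem \ref{thm:diff4}(ii) gives $\dim I_4^2(\Gamma) = \dim I_4^{(2)}(\Gamma)$, and therefore $\dim \Sigma_{4,4}(\Gamma) = \dim P_{4,4}(\Gamma)$.

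I expect the only non-formal ingredient to be the verification that general linear position of at most $6$ points in $\RR\PP^3$ forces $d$-independence; this is precisely why the argument must be placed after that auxiliary result. Beyond that I anticipate no obstacle: once full-dimensionality of each face inside the relevant degree-$4$ ideal component is known, the dimensional equalities and the strict inequality of Theorem \ref{thm:diff4} transfer verbatim to the faces.
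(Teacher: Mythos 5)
Your proposal is correct and follows the same route as the paper: the authors likewise deduce the corollary immediately from Theorem \ref{thm:diff4} by combining Proposition \ref{prop:sosFace} with Proposition \ref{prop:posFace}, the latter being applicable because Proposition \ref{prop:genInd} shows that at most six points in general linear position in $\RR\PP^3$ are $2$-independent. Your one flagged prerequisite is exactly the auxiliary result the paper proves for this purpose, so there is nothing missing.
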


The paper is structured as follows. Section \ref{dimsection} focuses on proving that $d$-independence is a sufficient condition for full-dimensionality of $P_{n,2d}(\Gamma)$ in $I^{(2)}_{2d}(\Gamma)$. Additionally, properties of $d$-independent sets are studied, in particular, for sets with bounded size, $d$-independence turns out to be a generic condition. In Section \ref{Sec:ternary}, ternary forms are considered. Among other results, we prove Theorem \ref{thm:ternary}, which completely characterizes the occurence of dimensional differences between exposed faces $P_{3,2d}(\Gamma)$ and $\Sigma_{3,2d}(\Gamma)$. In Section \ref{Sec:sixpoints}, we are interested in the case $(n,2d) = (4,4)$ and derive analogous statements as for ternary forms (Theorem \ref{thm:diff4}), which yield a complete classification also in this case. In the two smallest cases ($(n,2d)\in\{((3,6),(4,4)\})$, we show that the maximal dimensional difference between the exposed faces is exactly one. Section \ref{Sec:1dim} provides explicit characterizations of these one-dimensional differences, yielding nonnegative polynomials that are not sums of squares. Finally, in Section \ref{gapsection}, we compute some a priori bounds for the possible dimensional differences as well as naive bounds for the minimum cardinality of a $d$-independent set such that a dimensional difference occurs. Moreover, we state some open questions for further research.

\section{Dimension of faces of $\p$}\label{dimsection}
In this section, we study the inclusion $P_{n,2d}(\Gamma)\subset I_{2d}^{(2)}(\Gamma)$ in more detail. In particular, we are interested in the question of when this inclusion is full-dimensional. Moreover, the proofs of 
Propositions \ref{prop:posFace} and \ref{prop:dIndependent}, such as Corollary \ref{cor:Zariski} are provided.
 
\subsection{Sum of Squares Certificate.}
In this section, we provide the proof of Proposition \ref{prop:posFace}.\\
Let $\Gamma$ be a finite set of points in $\rpn$ and consider $I^{(2)}_{2d}(\Gamma)$, the vector space of forms of degree $2d$ vanishing at $\Gamma$ with multiplicity at least $2$. 
Every double zero forces $n$ linear conditions on forms vanishing at $\Gamma$. Since not all of these conditions are necessarily independent, the following inequality always holds: 
\begin{equation}\label{eq:AH}
\dim I^{(2)}_{2d}(\Gamma)\geq \dim H_{n,2d}-n|\Gamma|.
\end{equation}
However, generically ~-- with a small list of exceptions~-- the Alexander-Hirschowitz Theorem \cite{Miranda} tells us that equality holds in \eqref{eq:AH}. 

We establish full-dimensionality of $\p(\Gamma)$ in $I^{(2)}_{2d}(\Gamma)$ by finding a form $p \in \p(\Gamma)$ to which we can add a suitably small multiple of any double vanishing form such that it will remain nonnegative:
$$p+\epsilon q \in \p(\Gamma) \hspace{2mm} \text{for some sufficiently small} \hspace{2mm} \epsilon \hspace{2mm} \text{and any} \hspace{2mm} q \in I^{(2)}_{2d}(\Gamma).$$

The form $p$ can be viewed as a certificate of full-dimensionality of $\p(\Gamma)$ in $I^{(2)}_{2d}(\Gamma)$. The important point is that $p$ can be any form, in particular, we will focus on finding such $p$ that is a sum of squares. This approach follows that of \cite{Rez2} and, indeed, it can be traced to the original proof of Hilbert \cite{Hilbert}.

For a form $p$, let the Hessian $H_p$ of $p$ be the matrix of second derivatives of $p$:

$$H_p=(h_{ij}), \hspace{5mm} \text{where} \hspace{5mm} h_{ij}=\frac{\partial^2 p}{\partial x_i \partial x_j}.$$

We note that if a form $p$ vanishes at a point $s$, then, by homogeneity, $p$ needs to vanish at a line through $s$. Therefore, $s$ lies in the kernel of the Hessian of $p$ at $s$: $H_p(s)s=0$.

If a form $p$ is nonnegative, then its Hessian at any zero $s$ is positive semidefinite since 0 is a minimum for $p$. We call a nonnegative form $p$ \textit{round} at a zero $s \in \rpn$ if the Hessian of $p$ at $s$ is positive definite on the subspace $s^{\perp}$ of vectors perpendicular to $s$, i.\,e., 
$$p \hspace{2mm} \text{is round at a zero} \hspace{2mm} s \hspace{2mm} \Leftrightarrow \hspace{2mm} y^TH_p(s)y > 0 \hspace{2mm} \text{for all } y\in s^{\perp} \hspace{2mm}. $$

\noindent For a form $p$, we let $Z(p)$ denote the real projective variety of $p$. 

We will need the following ``extension lemma'', which follows from Lemma 3.1 of \cite{Rez2}. 
\begin{lemma}\label{ext}
Let $p \in P_{n,2d}$ be a nonnegative form with a finite zero set $Z(p)$ and suppose that $p$ is round at every point in $Z(p)$. Furthermore, let $q$ be a form such that $q$ vanishes to order 2 on $Z(p)$. 
Then, for a sufficiently small $\epsilon$, the form $p+\epsilon q$ is nonnegative.
\end{lemma}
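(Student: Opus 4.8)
The plan is to pass to the unit sphere $\mathbb{S}^{n-1}$, localize the analysis around the finitely many zeros of $p$, and then patch a local estimate coming from roundness together with a global compactness argument. Identifying $\RR\PP^{n-1}$ with $\mathbb{S}^{n-1}$ modulo antipodes, nonnegativity of a form on $\RR\PP^{n-1}$ is equivalent to nonnegativity of its restriction to $\mathbb{S}^{n-1}$, and since $p$ and $q$ are homogeneous of the same even degree it suffices to produce, for small $\epsilon$, the bound $p+\epsilon q\ge 0$ on $\mathbb{S}^{n-1}$. Write $Z=Z(p)\cap\mathbb{S}^{n-1}$, a finite set (closed under antipodes).

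The first step is the local analysis at a fixed $s\in Z$. Since $p\ge 0$ attains its minimum value $p(s)=0$ at $s$, one has $\nabla p(s)=0$, so the Taylor expansion of $p$ at $s$ begins with the Hessian term, $p(s+y)=\tfrac12 y^{T}H_p(s)y+O(|y|^{3})$. Roundness of $p$ at $s$ means $H_p(s)$ is positive definite on $s^{\perp}$, which I would use to extract constants $c_s>0$ and $C_s\ge 0$ with $p(s+y)\ge c_s|y|^{2}-C_s|y|^{3}$ for $y\in s^{\perp}$ with $|y|$ small. On the other hand, $q$ vanishes to order $2$ at $s$, i.e.\ $q(s)=0$ and $\nabla q(s)=0$, so $|q(s+y)|\le C_s'|y|^{2}$ near $0$. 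This local comparison of a round zero of $p$ against a doubly vanishing perturbation is essentially the content of Lemma~3.1 of \cite{Rez2}, which I would invoke here.

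Next I would globalize. Using the chart $\phi_s\colon y\mapsto (s+y)/|s+y|$, $y\in s^{\perp}$, which maps a neighborhood of $0$ diffeomorphically onto a spherical cap around $s$, together with homogeneity, the sign of $p+\epsilon q$ on this cap agrees with that of $p(s+y)+\epsilon q(s+y)$. Setting $c=\min_{s\in Z}c_s>0$, $C=\max_{s\in Z}C_s$ and $C'=\max_{s\in Z}C_s'$, I would fix a radius $r_0>0$ with $r_0\le c/(4C)$, \emph{depending only on $p$ and $q$}, small enough that all the Taylor estimates above hold for $|y|\le r_0$ and that the caps $U_s:=\phi_s(\{y\in s^{\perp}:|y|<r_0\})$, $s\in Z$, are pairwise disjoint. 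Then, provided $\epsilon\le c/(2C')$, for every $s\in Z$ and every $y$ with $|y|<r_0$ one gets $p(s+y)+\epsilon q(s+y)\ge (c-\epsilon C')|y|^{2}-C|y|^{3}\ge \tfrac{c}{2}|y|^{2}-C|y|^{3}\ge \tfrac{c}{4}|y|^{2}\ge 0$, so $p+\epsilon q\ge 0$ on $\bigcup_{s\in Z}U_s$. Finally $K:=\mathbb{S}^{n-1}\setminus\bigcup_{s\in Z}U_s$ is compact and disjoint from $Z(p)$, hence $\delta:=\min_K p>0$; since $|q|$ is bounded on $\mathbb{S}^{n-1}$, we get $p+\epsilon q>0$ on $K$ once $\epsilon<\delta/(1+\max_{\mathbb{S}^{n-1}}|q|)$. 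Taking $\epsilon$ below all these finitely many thresholds yields $p+\epsilon q\ge 0$ on $\mathbb{S}^{n-1}$, and therefore, by homogeneity, $p+\epsilon q\in P_{n,2d}$.

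I expect the only delicate point — the one I would be careful to get right — to be the order of the quantifiers: the neighborhoods $U_s$ and the local constants $c,C,C'$ must be fixed \emph{before} $\epsilon$. This is exactly where the roundness hypothesis is essential: it furnishes an $\epsilon$-independent quadratic lower bound $c|y|^{2}$ for $p$ near each of its zeros, which dominates the quadratic-order perturbation $\epsilon q$. Without roundness — say if $H_p(s)$ were only positive semidefinite on $s^{\perp}$ — the perturbation could push $p+\epsilon q$ negative along the degenerate directions no matter how small $\epsilon$ is, so the argument would genuinely break down.
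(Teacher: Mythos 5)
Your argument is correct, and it is worth noting that it is more self-contained than what the paper does: the paper does not prove this lemma at all, but simply derives it from Lemma~3.1 of \cite{Rez2}, whereas you supply the underlying analytic proof. Your route --- pass to $\mathbb{S}^{n-1}$, use $\nabla p(s)=0$ and roundness to get a uniform quadratic lower bound $p(s+y)\geq c|y|^{2}-C|y|^{3}$ on the tangent directions $y\in s^{\perp}$, dominate the perturbation by $|q(s+y)|\leq C'|y|^{2}$ (order-two vanishing), and finish on the compact complement of the caps by $\min_{K}p>0$ and boundedness of $q$ --- is exactly the standard argument behind Reznick's lemma, and your insistence that the caps and the constants $c,C,C'$ be fixed before $\epsilon$ is chosen is indeed the one point where the proof could otherwise go wrong; roundness is what makes these constants $\epsilon$-independent. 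Two purely cosmetic remarks: to avoid division by zero you should take, say, $r_{0}\leq c/(4(C+1))$ and $\epsilon\leq c/(2(C'+1))$, and if the lemma is read as allowing $\epsilon$ of either sign, replace $\epsilon$ by $|\epsilon|$ in your estimates (your bounds already accommodate this since they only use $|q|$). Neither affects the validity of the proof.
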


From this, we infer the following immediate corollary, which will be crucial for the proof of Proposition \ref{prop:posFace}.

\begin{cor} \label{ext2}
Let $\Gamma$ be a finite set in $\RR\PP^{n-1}$. Suppose that there exists a nonnegative form $p$ in $\p(\Gamma)$ such that 
$Z(p)=\Gamma$ and $p$ is round at every point $s \in \Gamma$. Then the face $\p(\Gamma)$ is full-dimensional in the vector space $I^{(2)}_{2d}(\Gamma)$.
\end{cor}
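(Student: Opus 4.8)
The plan is to prove Corollary~\ref{ext2} as a direct application of Lemma~\ref{ext} together with Corollary~\ref{ext2}'s hypotheses and the general setup from the beginning of the section. Recall what we must establish: given a finite $\Gamma \subset \RR\PP^{n-1}$ and a nonnegative form $p \in \p(\Gamma)$ with $Z(p) = \Gamma$ and $p$ round at every point of $\Gamma$, the face $\p(\Gamma)$ is full-dimensional in $I^{(2)}_{2d}(\Gamma)$. Since we already know the inclusion $\p(\Gamma) \subseteq I^{(2)}_{2d}(\Gamma)$ always holds, ``full-dimensional'' means precisely that $\p(\Gamma)$ contains a relatively open subset of the vector space $I^{(2)}_{2d}(\Gamma)$, equivalently that its affine (equivalently linear) span is all of $I^{(2)}_{2d}(\Gamma)$.

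First I would fix the certificate form $p$ supplied by the hypothesis and consider an arbitrary $q \in I^{(2)}_{2d}(\Gamma)$. By definition of $I^{(2)}_{2d}(\Gamma)$, the form $q$ vanishes to order at least $2$ at every point of $\Gamma$; since $Z(p) = \Gamma$, this says exactly that $q$ vanishes to order $2$ on $Z(p)$, which is finite. Thus $p$ and $q$ satisfy the hypotheses of Lemma~\ref{ext}: $p$ is nonnegative with finite zero set and round at each of its zeroes, and $q$ double-vanishes on $Z(p)$. Applying Lemma~\ref{ext} gives an $\epsilon_q > 0$ such that $p + \epsilon q$ is nonnegative for all $|\epsilon| \le \epsilon_q$ — note the lemma applies to $\pm q$ as well since $-q$ also double-vanishes on $Z(p)$, so in fact $p + \epsilon q \in P_{n,2d}$ for all $\epsilon$ in a symmetric interval around $0$. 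Moreover $p + \epsilon q$ still vanishes at every point of $\Gamma$ (both $p$ and $q$ do), so $p + \epsilon q \in \p(\Gamma)$.

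Next I would assemble this into a dimension count. Pick a basis $q_1, \dots, q_N$ of the vector space $I^{(2)}_{2d}(\Gamma)$, where $N = \dim I^{(2)}_{2d}(\Gamma)$. For each $i$, Lemma~\ref{ext} yields $\epsilon_i > 0$ with $p \pm \epsilon_i q_i \in \p(\Gamma)$; shrinking to a common $\epsilon = \min_i \epsilon_i > 0$, all $2N$ forms $p \pm \epsilon q_i$ lie in $\p(\Gamma)$, together with $p$ itself. The affine hull of $\{p\} \cup \{p \pm \epsilon q_i\}$ contains $p$ and the $N$ linearly independent direction vectors $\epsilon q_i$, hence is an $N$-dimensional affine subspace of the ambient space $H_{n,2d}$; since it also sits inside the linear span of $I^{(2)}_{2d}(\Gamma)$ (as each $p \pm \epsilon q_i \in I^{(2)}_{2d}(\Gamma)$, using $p \in \p(\Gamma) \subseteq I^{(2)}_{2d}(\Gamma)$), it must be a translate of all of $I^{(2)}_{2d}(\Gamma)$. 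Therefore $\p(\Gamma)$, being convex and containing this full-dimensional affine piece, has $\dim \p(\Gamma) = N = \dim I^{(2)}_{2d}(\Gamma)$, which is the assertion.

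I do not expect a serious obstacle here: the corollary is essentially a packaging of Lemma~\ref{ext}, and the only points requiring a little care are (a) checking that applying the lemma to both $q$ and $-q$ is legitimate — it is, since the double-vanishing condition is symmetric in sign — and (b) taking a uniform $\epsilon$ over a finite basis rather than over the infinitely many elements of $I^{(2)}_{2d}(\Gamma)$, which is why it is important that $I^{(2)}_{2d}(\Gamma)$ is finite-dimensional (it is a subspace of $H_{n,2d}$). If anything, the mild subtlety worth a sentence in the write-up is that $\p(\Gamma)$ is a cone (not bounded), so ``full-dimensional in $I^{(2)}_{2d}(\Gamma)$'' is correctly captured by exhibiting an $N$-dimensional affine slice inside it rather than by a volume argument; the convexity of $\p(\Gamma)$ makes this immediate.
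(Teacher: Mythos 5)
Your proof is correct and follows essentially the same route as the paper: apply Lemma~\ref{ext} with the certificate $p$ to perturb in any direction $q \in I^{(2)}_{2d}(\Gamma)$ while staying in $\p(\Gamma)$, then conclude full-dimensionality from convexity. The paper leaves the final dimension count implicit ("since $\p(\Gamma)$ is convex, it follows"), whereas you spell it out with a basis and a uniform $\epsilon$; this is just added detail, not a different argument.
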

\begin{proof}
Let $p \in \p(\Gamma)$ be as in the assumptions. Then, by Lemma \ref{ext}, for any $q \in I^{(2)}_{2d}(\Gamma)$ we have $p+\epsilon q \in \p(\Gamma)$ for sufficiently small $\epsilon$. Since $\p(\Gamma)$ is a convex set, it follows that it is full-dimensional in $I^{(2)}_{2d}(\Gamma)$.
\end{proof}

We can finally provide the proof of Proposition \ref{prop:posFace}.

\begin{proof}[Proof of Proposition \ref{prop:posFace}]
Let $q_1, \ldots, q_k$ be a basis of $I_{d}(\Gamma)$. We claim that $p=\sum_{i=1}^{k} q_i^2$ has the properties of Corollary \ref{ext2} and therefore, the convex cone $\p(\Gamma)$ is full-dimensional in $I^{(2)}_{2d}(\Gamma)$.

Since $\Gamma$ forces no additional zeroes and since $q_1,\ldots,q_k$ is a basis of $I_{d}(\Gamma)$, it follows that the forms $q_i$ have no common zeroes outside of $\Gamma$ and thus $Z(p)=\Gamma$. 

Now choose $s \in \Gamma$. It remains to show that $p$ is round at $s$, i.\,e., $H_p(s)$ is positive definite on $s^{\perp}$. Since the forms in $I_{d}(\Gamma)$ that double vanish at $s$ form a vector space of codimension $n-1$ in $I_{d}(\Gamma)$, we see that for $1\leq i\leq k$ the gradients of $q_i$ at $s$ must span a vector space of dimension $n-1$. Since, by Euler's identity (see, e.\,g., \cite[Lemma 11.4]{Hassett}), $\ip{\nabla q_i}{s}=0$ for all $i$, this implies that the gradients actually span $s^{\perp}$.

Note that the Hessian of $p$ is the sum of the Hessians of $q_i^2$, i.\,e., 
$$H_p=\sum_{i=1}^k H_{q^2_i}.$$ Since $q_i(s)=0$ for all $i$ and $s \in \Gamma$, we conclude that

$$\frac{\partial^2 q_i^2}{\partial x_l \partial x_j}(s)=2\frac{\partial q_i}{\partial x_l}(s)\frac{\partial q_i}{\partial x_j}(s).$$

Therefore, we see that the Hessian of $q^2_i$ at any $s \in \Gamma$ is actually double the tensor of the gradient of $q_i$ at $s$ with itself: $$H_{q^2_i}(s)=2\nabla q_i\otimes \nabla q_i(s).$$ 
It is now straightforward to verify that 
\begin{equation*}
\nabla q_j(s)^T H_{q^2_i}(s)\nabla q_j(s) > 0
\end{equation*}
for all $1\leq i,j\leq k$ and $s\in\Gamma$, which shows the claim.
\end{proof}

From now on, we will focus on the study of the degree $2d$ part of the second symbolic power $I^{(2)}_{2d}(\Gamma)$ since, by Proposition \ref{prop:posFace}, we have the equality 
$$\dim P_{n,2d}(\Gamma)=\dim I^{(2)}_{2d}(\Gamma),$$
whenever $\Gamma$ is a finite $d$-independent set. We first provide the following equivalent characterization of $d$-independence.

\begin{prop}\label{prop:Hilbert}
Let $\Gamma\subset \RR\PP^{n-1}$ be a finite set of points and let $J\subset\mathbb R[x]$ be the ideal generated by $I_{d}(\Gamma)$. Then $\Gamma$ is $d$-independent if and only if the Hilbert polynomial of $\mathbb R[x]/J$ is equal to $|\Gamma|$. 
\end{prop}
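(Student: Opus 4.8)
The plan is to characterize $d$-independence in terms of the Hilbert function of $\mathbb{R}[x]/J$ and then invoke the stabilization of the Hilbert function to its Hilbert polynomial. First I would unwind the two conditions in Definition \ref{def:dIndependence} as statements about the homogeneous ideal $J = (I_d(\Gamma))$. Note that $J_e = 0$ for $e < d$, and $J_d = I_d(\Gamma)$; for $e > d$, $J_e = H_{n,e-d} \cdot I_d(\Gamma)$, the image of the multiplication map $H_{n,e-d} \otimes I_d(\Gamma) \to H_{n,e}$. Condition \eqref{first cond} says precisely that the base locus of the linear system $|I_d(\Gamma)|$ is set-theoretically $\Gamma$ (no extra common zeroes in $\CC\PP^{n-1}$), so that $\sqrt{J}$ is the radical ideal $I(\Gamma)$ of the $|\Gamma|$ points, and hence $\dim \mathbb{R}[x]/J = 1$ and the Hilbert polynomial of $\mathbb{R}[x]/J$ is a constant, equal to the degree of the scheme $\mathrm{Proj}(\mathbb{R}[x]/J)$ — but as a set this scheme is $\Gamma$, so the Hilbert polynomial is $|\Gamma|$ if and only if that scheme is reduced (geometrically: the ideal $J$ is "saturated enough" that it has no embedded fat-point structure contributing to the degree). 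So the real content is: conditions \eqref{first cond} and \eqref{second cond} together are equivalent to $J$ defining, in large degree, the reduced $|\Gamma|$ points.

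The key step is to connect condition \eqref{second cond} with the saturation of $J$. I would argue as follows: the saturation $J^{\mathrm{sat}}$ of $J$ is contained in $I(\Gamma)$ (by \eqref{first cond}, since $\sqrt{J} = I(\Gamma)$ and $I(\Gamma)$ is saturated), and the Hilbert polynomial of $\mathbb{R}[x]/J$ equals the Hilbert polynomial of $\mathbb{R}[x]/J^{\mathrm{sat}}$. Since $I(\Gamma)/J^{\mathrm{sat}}$ is supported only at the irrelevant ideal (both have the same radical and the same support away from the origin... more carefully, $J^{\mathrm{sat}} \subseteq I(\Gamma)$ and they agree in high degrees iff $J^{\mathrm{sat}} = I(\Gamma)$ since $I(\Gamma)$ is saturated and the quotient is finite-dimensional), the Hilbert polynomial of $\mathbb{R}[x]/J$ equals $|\Gamma| = $ Hilbert polynomial of $\mathbb{R}[x]/I(\Gamma)$ if and only if $J^{\mathrm{sat}} = I(\Gamma)$, equivalently $J_e = I_e(\Gamma)$ for all $e \gg 0$. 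So I must show: \eqref{first cond} and \eqref{second cond} $\iff$ $J_e = I_e(\Gamma)$ for $e \gg 0$. The forward direction: assuming both conditions, I would show $J_d = I_d(\Gamma)$ spans enough — the dimension count in \eqref{second cond} forces, for each $s \in \Gamma$, the evaluation-and-differentiation map from $J_d$ to be as surjective as possible, and then a standard argument (multiply by generic linear forms, use that a point $s$ is "separated" from the rest) propagates this to all $e > d$, giving $J_e = I_e(\Gamma)$ eventually. The converse: if $J_e = I_e(\Gamma)$ for large $e$ then in particular the base locus is $\Gamma$ (giving \eqref{first cond}), and the codimension count \eqref{second cond} follows because $I_d(\Gamma)$ must separate tangent directions at each point — but here I must be careful, since \eqref{second cond} is a statement in degree exactly $d$, not large degree, so the converse of Proposition \ref{prop:Hilbert} as literally stated may need the extra input that the failure of \eqref{second cond} in degree $d$ already obstructs the Hilbert polynomial; I would reconcile this by showing that if \eqref{second cond} fails for some $s$, then the Hilbert function of $\mathbb{R}[x]/J$ in degree $d$ exceeds $|\Gamma|$ in a way that persists (the Hilbert function of $\mathbb{R}[x]/J$ is nonincreasing after... no — rather, one shows directly that failure of \eqref{second cond} means $J$ is "too small" relative to what a reduced scheme of $|\Gamma|$ points would force, hence $\dim \mathbb{R}[x]/J$ in degree $d$ is $> |\Gamma|$ and, by a semicontinuity/persistence argument for ideals generated in a single degree, the Hilbert polynomial is $> |\Gamma|$).

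The main obstacle I anticipate is precisely this last point: translating the \emph{degree-$d$} codimension condition \eqref{second cond} into a statement about the \emph{Hilbert polynomial} (an asymptotic invariant). The clean way around it is to prove the chain: $\Gamma$ is $d$-independent $\iff$ $\dim J_d = \binom{n+d-1}{d} - |\Gamma|$ \emph{and} the base locus of $J$ is $\Gamma$ $\iff$ $\dim(\mathbb{R}[x]/J)_d = |\Gamma|$ and $J_d$ has no extra base points $\iff$ Hilbert function of $\mathbb{R}[x]/J$ equals $|\Gamma|$ from degree $d$ onward $\iff$ Hilbert polynomial is $|\Gamma|$. For the step "$\dim(\mathbb{R}[x]/J)_d = |\Gamma|$ and no extra base points $\Rightarrow$ Hilbert function constantly $|\Gamma|$ from degree $d$ on", I would use: $J_d = I_d(\Gamma)$ exactly when condition \eqref{second cond} holds together with the fact that $I_d(\Gamma)$ imposes independent conditions — wait, actually \eqref{second cond} with codimension $|\Gamma| + n - 1$ forces $\dim I_d(\Gamma) \geq \binom{n+d-1}{d} - |\Gamma|$ but one needs equality; this is where \eqref{first cond} feeds back, or one notes $I_d(\Gamma)$ is automatically contained in what \eqref{second cond} describes. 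I expect this bookkeeping — confirming that $d$-independence forces $\dim(\mathbb{R}[x]/J)_d = |\Gamma|$ exactly — plus the propagation to higher degrees (that once the Hilbert function hits $|\Gamma|$ via an ideal with the right base locus, it stays there) to be the technical heart; the propagation itself is a standard Castelnuovo-type argument (adding one generic point at a time, or using that $I(\Gamma)$ is $|\Gamma|$-regular-ish and $J \supseteq$ an ideal that already resolves the points).
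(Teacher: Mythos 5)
Your overall framing is sound and genuinely different from the paper's: you reduce the claim to ``conditions \eqref{first cond} and \eqref{second cond} hold if and only if $J^{\mathrm{sat}}=I(\Gamma)$, equivalently $J_e=I_e(\Gamma)$ for $e\gg 0$,'' whereas the paper proves the forward direction by repeatedly applying Bertini's theorem to produce a reduced complete intersection $f_1\cap\dots\cap f_{n-1}\supseteq\Gamma$ inside $J_{\CC}$ and then separating the extraneous intersection points by products $hh'$ with $h\in I_d(\Gamma)$ (which also gives the explicit degree bound $(n-1)(d-1)+d$ used afterwards for Corollary \ref{cor:Zariski}). But the step you yourself call the technical heart is missing, not merely compressed. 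For the forward direction you only assert that surjectivity of the evaluation-and-differentiation maps in degree $d$ ``propagates'' to higher degrees by multiplying by generic linear forms. Multiplying elements of $J_d$ by linear forms produces elements of $J_e$ with prescribed values and gradients at $\Gamma$, but that gives no upper bound on $\dim(\RR[x]/J)_e$, which is what $J_e=I_e(\Gamma)$ requires; ``a standard Castelnuovo-type argument'' is not a proof. What would actually complete your route is a local argument: condition \eqref{second cond} says the differentials at each $s\in\Gamma$ of the degree-$d$ generators span the cotangent space $\mathfrak m_s/\mathfrak m_s^2$, so the ideal they generate in $\mathcal O_{\PP^{n-1},s}$ satisfies $\mathfrak a+\mathfrak m_s^2=\mathfrak m_s$ and Nakayama gives $\mathfrak a=\mathfrak m_s$; combined with \eqref{first cond} this shows the ideal sheaf generated by $J$ is exactly $\mathcal I_\Gamma$, hence $J^{\mathrm{sat}}=I(\Gamma)$ and the Hilbert polynomial is $|\Gamma|$. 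Either this or the paper's Bertini construction must be supplied explicitly.

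The converse as you describe it would fail. If \eqref{second cond} fails because the gradient map $I_d(\Gamma)\to s^{\perp}$, $p\mapsto\nabla p(s)$, is not surjective (while the evaluation conditions are independent), then $\dim(\RR[x]/J)_d$ equals $|\Gamma|$ exactly; and in general $\dim(\RR[x]/J)_d\le|\Gamma|$ always, so your claim that failure of \eqref{second cond} makes $\dim(\RR[x]/J)_d>|\Gamma|$ is never true, and no ``semicontinuity/persistence'' argument can start from it. The defect is invisible in degree $d$ and is asymptotic: the correct mechanism (the one the paper uses) is that if $\langle\nabla p(s),w\rangle=0$ for all $p\in I_d(\Gamma)$ and some $w\notin\RR s$, then by the Leibniz rule every element of $J$, in every degree, vanishes at $s$ with $w$-derivative zero there, so $J$ lies in the ideal of a zero-dimensional scheme of length at least $|\Gamma|+1$ and the Hilbert polynomial of $\RR[x]/J$ is at least $|\Gamma|+1$. (One further case deserves care in your write-up, and is also glossed over in the paper: \eqref{second cond} can fail because the $|\Gamma|$ evaluation conditions themselves are dependent, which is distinct from both ``extra common zero'' and ``bad tangent direction'' and must be argued separately.)
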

\begin{proof}
Let $J$ be the ideal generated by $I_{d}(\Gamma)$. Let $I_{\CC,d}(\Gamma)$ be the set of all degree $d$ \emph{complex} forms vanishing at $\Gamma$. We first observe that linear combinations of forms in $I_{d}(\Gamma)$ taken with complex coefficients generate $I_{\CC,d}(\Gamma)$.
Therefore, if we let $J_{\CC}$ be the complex ideal generated $I_{\CC,d}(\Gamma)$, then it suffices to show that the Hilbert polynomial of $\mathbb C[x]/J_{\CC}$ is $|\Gamma|$.

We now apply Bertini's Theorem (see, e.g., \cite[Theorem 17.16]{AG}) 
to the linear system of divisors $I_{\CC,d}(\Gamma)$. Since $\Gamma$ is $d$-independent, it follows that a general element of $I_{\CC,d}(\Gamma)$ is non-singular at every point of $\Gamma$ and thus we may find a smooth hypersurface $f_1 \in I_{\CC,d}(\Gamma)$. Now $d$-independence guarantees that a general form in $I_{\CC,d}(\Gamma)$ intersects $f_1$ smoothly and therefore we apply Bertini's Theorem again to find $f_2 \in  I_{\CC,d}(\Gamma)$ such that $f_1\cap f_2$ is smooth. We proceed in this way repeatedly applying Bertini's Theorem until we end up with a transverse zero-dimensional intersection $V=f_1\cap\dots\cap f_{n-1}$. By construction we have $\Gamma \subseteq V$. 

Since $J_{\CC}$ defines a zero-dimensional ideal and all forms in $J_{\CC}$ vanish at $\Gamma$, it follows that the Hilbert polynomial of $\mathbb C[x]/J_{\CC}$ is a constant and it is greater or equal than $|\Gamma|$. Since we have $\langle f_1,\dots ,f_{n-1} \rangle \subset J_{\CC}$ and the ideal generated by $f_1,\dots,f_{n-1}$ is radical, it follows that we just need to show that for all $s \in V \setminus \Gamma$ there exists $g \in J_{\CC}$ such that $g(s) \neq 0$ and $g(z)=0$ for all $z\in V \setminus \{s\}$. Since $\Gamma$ is $d$-independent, it suffices to show for all $s \in V\setminus\Gamma$ there exists $h \in I_{d}(\Gamma)$ such that $h(s) \neq 0$. Also there exists $h' \in \CC[x_1,\dots,x_n]$ such that $h'(s)=1$ and $h'(z)=0$ for all $z\in V\setminus \{s\}$. Therefore $hh' \in J_{\CC}$ and one direction of the proposition follows.

Now suppose that $\Gamma$ is not $d$-independent. First, if all forms in $I_d(\Gamma)$ vanish at a point not in $\Gamma$, then all forms in $J$ vanish on at least $|\Gamma|+1$ points and therefore the Hilbert function of $\RR[x]/J$ is at least $|\Gamma|+1$ for all large enough degrees. Therefore the Hilbert polynomial of $\RR[x]/J$ is not $|\Gamma|$. Now suppose that for some $s \in \Gamma$ and some nonzero $w \in \RR^n$ we have $\langle \nabla p(s),w \rangle=0$ for all $p \in I_{d}(\Gamma)$. Then again we find that the Hilbert function of $\RR[x]/J$ is at least $|\Gamma|+1$ for all high enough degrees.

\end{proof}

We remark that in the above proof $h'$ may be chosen such that $\deg h' \leq (n-1)(d-1)$. Therefore we have $\deg hh' \leq (n-1)(d-1)+d$ and for all degrees $k \geq (n-1)(d-1)+d$ the Hilbert function of $\mathbb C[x]/J_{\CC}$ (and of $\mathbb R[x]/J$) evaluated at $k$ must be equal to $|\Gamma|$. Thus using standard methods (vanishing determinants) we can express the set of all configurations of $k$ points in $\mathbb{RP}^{n-1}$ 
that are $d$-independent as a complement of a closed algebraic set. Hence, the set of configurations $\Gamma$ of $k$ points in 
$\mathbb{RP}^{n-1}$ that are $d$-independent is Zariski open, which proves Corollary \ref{cor:Zariski}.


\subsection{A $d$-independent set of size $\binom{n+d-1}{d}-n$}
\label{Sec:dindependent}
The aim of this section is to provide the proof of Proposition \ref{prop:dIndependent}. For this goal, we will construct an example of a $d$-independent 
set of cardinality $\binom{n+d-1}{d}-n$. Since, by Corollary \ref{cor:Zariski}, being $d$-independent is a Zariski open condition, this will
already show the claim.

Define $\bar{S}_{n,d}$ to be the set of points in $\RR\PP^{n-1} $ that correspond to nonnegative integer partitions of $d$:

$$\bar{S}_{n,d}=\left\{[\alpha_1:\ldots:\alpha_n] \in \RR\PP^{n-1}\hspace{.4mm} \middle| \hspace{.9mm}\alpha_i \in \mathbb{Z}, \, \alpha_i \geq 0, \, \sum_{i=1}^n \alpha_i=d\right\}.$$

We can think of the points in $\bar{S}_{n,d}$ as all the possible exponent choices for monomials in $n$ variables of degree $d$. Therefore, $\bar{S}_{n,d}$ contains $\binom{n+d-1}{d}$ points.

Now let $S_{n,d}$ be the set of points in $\RR\PP^{n-1}$ that correspond to partitions of $d$ with at least 2 nonzero parts. The points in $S_{n,d}$ again correspond to monomials of degree $d$ but we exclude the monomials of the form $x_i^d$. Therefore, $S_{n,d}$ contains $\binom{n+d-1}{d}-n$ points.

\begin{prop}
 The set $S_{n,d}$ is $d$-independent.
\end{prop}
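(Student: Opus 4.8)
## Proof proposal

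\textbf{Strategy.} By Proposition \ref{prop:Hilbert}, it suffices to show that the Hilbert polynomial of $\RR[x]/J$ equals $|S_{n,d}| = \binom{n+d-1}{d}-n$, where $J$ is the ideal generated by $(I_d(S_{n,d}))$. The natural candidate for explicit degree-$d$ forms vanishing on $S_{n,d}$ is the following family of ``binomial-type'' forms: for each pair of indices $i \neq j$ consider the monomial $x_i^d$ and $x_j^d$; more to the point, observe that the point $[\alpha_1:\cdots:\alpha_n]$ with $\sum \alpha_i = d$ and at least two nonzero parts is a zero of the form $x_i^d$ whenever $\alpha_i \neq d$, i.e., whenever the point is \emph{not} the vertex $[0:\cdots:1:\cdots:0]$. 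So each monomial $x_i^d$ vanishes at every point of $S_{n,d}$ — these are $n$ obvious elements of $I_d(S_{n,d})$. The plan is to show that $J$ contains (a translate of) $\langle x_1^d,\ldots,x_n^d\rangle$ plus enough extra forms to cut the Hilbert polynomial down from $\binom{n-1+d}{d}^{\,?}$... more carefully: I want to identify $J$ closely enough to compute the constant value of its Hilbert polynomial.

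\textbf{Key steps.} First I would pin down $I_d(S_{n,d})$ precisely. A degree-$d$ form $f = \sum_{|\beta|=d} c_\beta x^\beta$ vanishes at the point $[\alpha] \in \bar S_{n,d}$ iff $\sum_\beta c_\beta \alpha^\beta = 0$; restricting to points of $S_{n,d}$ imposes one linear condition per point. The coordinate-monomial forms $x_1^d,\dots,x_n^d$ span an $n$-dimensional subspace of $I_d(S_{n,d})$, but $I_d(S_{n,d})$ is in general much bigger. The cleanest route is: (1) show the $|S_{n,d}|$ evaluation functionals (one per point of $S_{n,d}$) are linearly independent on $H_{n,d}$ — this is because the $|\bar S_{n,d}|$ points of $\bar S_{n,d}$ impose independent conditions on $H_{n,d}$ (their evaluation matrix is, up to scaling of rows, a nonsingular "generalized Vandermonde" — indeed $\bar S_{n,d}$ are exactly $\binom{n+d-1}{d}$ points and the monomials of degree $d$ separate them, since the value of $x^\gamma$ at $[\alpha]$, suitably normalized, determines $[\alpha]$); hence the subset $S_{n,d}$ also imposes independent conditions, giving $\dim I_d(S_{n,d}) = \binom{n+d-1}{d} - |S_{n,d}| = n$. (2) Therefore $I_d(S_{n,d})$ is spanned by exactly $x_1^d,\dots,x_n^d$ — these lie in it, and the space is $n$-dimensional. (3) So $J = \langle x_1^d,\ldots,x_n^d\rangle$, a complete intersection! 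Its quotient $\RR[x]/J$ has Hilbert series $\bigl(\frac{1-t^d}{1-t}\bigr)^n$, which is a polynomial in $t$ (Artinian), so its Hilbert polynomial is the zero polynomial, of degree... wait — this gives Hilbert polynomial $0$, not $|S_{n,d}|$. That signals I must be more careful: $\langle x_1^d,\ldots,x_n^d\rangle$ is Artinian, so its zero locus is empty, contradicting $S_{n,d}\subseteq Z(J)$. The resolution: $S_{n,d}$ does \emph{not} impose independent conditions on $H_{n,d}$; equivalently $\dim I_d(S_{n,d}) > n$. So step (1) fails, and that is exactly the crux.

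\textbf{The main obstacle.} The real content is computing $\dim I_d(S_{n,d})$, equivalently the rank of the evaluation matrix of the points of $S_{n,d}$ against the monomial basis of $H_{n,d}$. I would instead argue directly with $J = \langle I_d(S_{n,d})\rangle$ as follows. The ideal $J$ contains all $x_i^d$, so $\RR[x]/J$ is a quotient of $\RR[x]/\langle x_1^d,\dots,x_n^d\rangle$, hence Artinian, hence has constant Hilbert polynomial — but a \emph{nonzero} constant is impossible for an Artinian ring unless the ring is zero-dimensional with that many... no: an Artinian graded ring has Hilbert polynomial identically $0$. So literally $J = \langle I_d(S_{n,d})\rangle$ as an honest ideal in $\RR[x]$ can't have Hilbert polynomial $|S_{n,d}| \neq 0$ while containing the $x_i^d$. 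The only consistent reading is that Proposition \ref{prop:Hilbert} concerns the homogeneous/projective setup and $J$ must \emph{not} contain all of $x_1^d,\dots,x_n^d$ — i.e., \textbf{$x_i^d \notin I_d(S_{n,d})$}. Indeed $x_i^d$ vanishes at $[\alpha]$ iff $\alpha_i = 0$, and points of $S_{n,d}$ with $\alpha_i > 0$ exist, so $x_i^d \notin I_d(S_{n,d})$. Good — so my step (2) was wrong. Thus the correct plan is: (a) show the generators of $I_d(S_{n,d})$ are precisely the monomials $x^\beta$ with $|\beta| = d$ and at least two nonzero entries — because such $x^\beta$ vanishes at $[\alpha]$ unless $\mathrm{supp}(\beta)\subseteq\mathrm{supp}(\alpha)$ with... hmm, that's still not quite vanishing. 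Let me commit to the honest computation: the points of $S_{n,d}$ are \emph{generic enough} among torus-fixed-flavored points that one computes $Z(J)$ directly. The decisive lemma will be: \emph{$J$ is the saturation of a monomial ideal whose zero set is exactly the $|S_{n,d}|$ coordinate-subspace-type points}, and then verify both conditions of Definition \ref{def:dIndependence} by hand using the combinatorics of partitions — condition \eqref{first cond} (no extra common zeros) and condition \eqref{second cond} (the double-vanishing conditions at each $s\in S_{n,d}$ are independent, a codimension count $|S_{n,d}| + n - 1$ in $H_{n,d}$). I expect \eqref{second cond} — the independence of the second-order conditions at a point of the configuration — to be the hardest part, requiring an explicit choice of $n-1$ degree-$d$ forms vanishing at $S_{n,d}$ whose gradients at a fixed $s$ span $s^\perp$, built from the monomial structure around $s$.
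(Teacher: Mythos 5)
There is a genuine gap: your proposal never exhibits a single correct nonzero element of $I_d(S_{n,d})$, and without that neither the Hilbert-polynomial route via Proposition \ref{prop:Hilbert} nor a direct verification of Definition \ref{def:dIndependence} can get off the ground. Your candidate forms are all wrong, as you partly notice yourself: $x_i^d$ does not vanish on $S_{n,d}$ (it vanishes at $[\alpha]$ only when $\alpha_i=0$), and monomials $x^\beta$ with two nonzero exponents do not vanish on $S_{n,d}$ either (e.g.\ $x_1^{d-1}x_2$ is nonzero at $[d-1:1:0:\cdots:0]$). Moreover, you discard the wrong step: your step (1) is in fact correct and essential --- the points of $\bar S_{n,d}$ impose independent conditions on $H_{n,d}$ (this is Proposition \ref{wtf}, proved by exhibiting for each $s\in\bar S_{n,d}$ the form $p_s=\prod_i\prod_{k=0}^{s_i-1}(dx_i-kM)$, $M=x_1+\cdots+x_n$, which vanishes at all of $\bar S_{n,d}$ except $s$), and it gives $\dim I_d(S_{n,d})=\binom{n+d-1}{d}-|S_{n,d}|=n$. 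The contradiction you ran into comes solely from your step (2), the false identification of a spanning set. After retracting it you are left only with a plan (``verify both conditions by hand using the combinatorics of partitions''), which is precisely the part that requires the missing idea.

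The missing idea is the explicit factoring basis of $I_d(S_{n,d})$: the forms $Q_i=\prod_{k=0}^{d-1}(dx_i-kM)$ do vanish on $S_{n,d}$ (at $s\in S_{n,d}$ one has $M(s)=d$, so the factor with $k=s_i$ vanishes), and since $\dim I_d(S_{n,d})=n$ they form a basis (Proposition \ref{prop:basis}, using $Q_i(e_j)=0$ for $i\neq j$ and $Q_i(e_i)=d!$). With this basis both conditions of Definition \ref{def:dIndependence} become concrete computations: the common zero locus of $Q_1,\dots,Q_n$ is exactly $S_{n,d}$ (split into the cases $M(v)=0$ and $M(v)=d$ and use the factorization), and for each $s\in S_{n,d}$ the matrix $\bigl(\partial Q_i/\partial x_j(s)\bigr)$ is, after scaling rows by the nonvanishing cofactors $P_{s_i}(s)$, equal to $dI-C$ with $C$ of rank one, hence has rank exactly $n-1$; this is the codimension count in the second condition. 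Your anticipated hard part (gradients at $s$ spanning $s^{\perp}$) is exactly this rank computation, but it only works because of the factored structure of the $Q_i$, which your proposal does not supply.
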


The proof of the above proposition requires some additional results. 
The following proposition is taken from \cite[p. 31]{Rez3} and has been known for at least a hundred years. We reproduce the proof below.

\begin{prop}\label{wtf}
There are no nontrivial forms in $H_{n,d}$ that vanish at $\bar{S}_{n,d}$. In other words, $I_d(\bar{S}_{n,d})=0$.
\end{prop}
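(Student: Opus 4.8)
The plan is to show that if $f \in H_{n,d}$ vanishes at every lattice point $[\alpha_1:\ldots:\alpha_n]$ with $\alpha_i \geq 0$ integers summing to $d$, then $f = 0$. I would proceed by a multivariate difference/induction argument that reduces the statement in $n$ variables to the statement in $n-1$ variables. First, dehomogenize: set $x_n = 1$ and write $g(x_1,\ldots,x_{n-1}) = f(x_1,\ldots,x_{n-1},1)$, a polynomial of degree $\leq d$. The hypothesis says $g$ vanishes at all $(\alpha_1,\ldots,\alpha_{n-1})$ with $\alpha_i \geq 0$ integers and $\alpha_1 + \cdots + \alpha_{n-1} \leq d$ — this is the ``full grid simplex'' of lattice points, which has exactly $\binom{n+d-1}{d}$ points, matching the dimension of $H_{n,d}$.

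The key step is to establish the one-variable base case and then bootstrap. For $n = 1$: a form in one variable of degree $d$ is $c\,x_1^d$; it vanishes at the single point $[\,1\,] \in \RR\PP^0$ (corresponding to $\alpha_1 = d$) only if $c = 0$. For the inductive step, I would fix the last coordinate of the dehomogenized polynomial: view $g$ as a polynomial in $x_1$ of degree $\leq d$ with coefficients that are polynomials in $x_2,\ldots,x_{n-1}$. For each fixed admissible choice $(\alpha_2,\ldots,\alpha_{n-1})$ with $\sigma := \alpha_2 + \cdots + \alpha_{n-1} \leq d$, the polynomial $x_1 \mapsto g(x_1,\alpha_2,\ldots,\alpha_{n-1})$ has degree $\leq d - \sigma$ (because $g$ has total degree $\leq d$) yet vanishes at the $d - \sigma + 1$ distinct values $x_1 = 0, 1, \ldots, d - \sigma$; hence it is identically zero. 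Thus $g$ vanishes on the entire coordinate subspace-slices indexed by admissible $(\alpha_2,\ldots,\alpha_{n-1})$. Running this over all such tuples and collecting the coefficient of each power $x_1^j$, one gets that every coefficient polynomial in $x_2, \ldots, x_{n-1}$ vanishes at the appropriate lattice simplex; a degree-count (the coefficient of $x_1^j$ has total degree $\leq d-j$ and must vanish on the $(n-1)$-variable simplex of size $d-j$) lets the induction hypothesis apply to each coefficient, forcing $g = 0$ and hence $f = 0$.

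The main obstacle is bookkeeping the degree constraints so that at each stage the number of lattice points available exactly matches (or exceeds) what is needed to force vanishing — i.e. making sure that after restricting $x_1$ to integer values and extracting the coefficient of $x_1^j$, the residual polynomial in the remaining variables really does vanish on a full lattice simplex of the correct size $d - j$ in $n-1$ variables, so that the inductive hypothesis is genuinely applicable. Once the degree accounting is set up cleanly, the argument is a routine iterated application of the fact that a univariate polynomial of degree $m$ with $m+1$ distinct roots is zero. I would present the dehomogenized form of the statement, prove the iterated-restriction lemma, and then close the induction, remarking that homogenizing recovers the claim $I_d(\bar S_{n,d}) = 0$.
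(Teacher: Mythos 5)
There is a genuine gap at the heart of your inductive step. After fixing $(\alpha_2,\ldots,\alpha_{n-1})$ with $\sigma=\alpha_2+\cdots+\alpha_{n-1}$, you claim the univariate restriction $x_1\mapsto g(x_1,\alpha_2,\ldots,\alpha_{n-1})$ has degree at most $d-\sigma$ ``because $g$ has total degree $\leq d$.'' That is false: substituting constants for $x_2,\ldots,x_{n-1}$ only bounds the degree in $x_1$ by $\deg_{x_1} g\leq d$, not by $d-\sigma$. For instance $g=x_1^2-x_1x_2$ has total degree $2$, but its restriction to $x_2=1$ is $x_1^2-x_1$, of degree $2$, which vanishes at the $d-\sigma+1=2$ admissible points $x_1=0,1$ without being identically zero. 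So the root count $d-\sigma+1$ does not force the restriction to vanish, and everything downstream (coefficient extraction and the application of the inductive hypothesis) collapses. This is exactly the ``degree accounting'' you flagged as the main obstacle, and it cannot be fixed by bookkeeping alone within your scheme; the standard repair is a double induction on (number of variables, degree): by induction on the variables $g(0,x_2,\ldots)$ vanishes, so $g=x_1h$ with $\deg h\leq d-1$, and $h(x_1+1,x_2,\ldots)$ vanishes on the lattice simplex of size $d-1$, closing an induction on $d$. A secondary problem is your dehomogenization: setting $x_n=1$ sends $[\alpha_1:\cdots:\alpha_n]$ with $\alpha_n>0$ to the rational point $(\alpha_1/\alpha_n,\ldots,\alpha_{n-1}/\alpha_n)$ and loses the points with $\alpha_n=0$ entirely, so the hypothesis you state for $g$ (vanishing at all integer points of the simplex $\sum\alpha_i\leq d$) is not what this chart gives; to get that statement you should instead restrict to the affine hyperplane $x_1+\cdots+x_n=d$, i.e.\ set $\tilde g(x_1,\ldots,x_{n-1})=f(x_1,\ldots,x_{n-1},d-x_1-\cdots-x_{n-1})$, which is injective on $H_{n,d}$.

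For comparison, the paper avoids induction altogether: for each $s\in\bar S_{n,d}$ it exhibits the explicit form $p_s=\prod_{i=1}^n\prod_{k=0}^{s_i-1}(dx_i-kM)$ with $M=x_1+\cdots+x_n$, which vanishes at every point of $\bar S_{n,d}$ except $s$. This shows the $\binom{n+d-1}{d}$ evaluation functionals at $\bar S_{n,d}$ are linearly independent on $H_{n,d}$, and since $|\bar S_{n,d}|=\dim H_{n,d}$ this immediately gives $I_d(\bar S_{n,d})=0$. Your intended route (affine restriction plus lattice-simplex vanishing) can be made to work with the corrected chart and the double induction sketched above, but as written the key step fails.
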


\begin{proof}
For every point $s=[s_1:\ldots:s_n] \in \bar{S}_{n,d}$, we will construct a form $p_s \in \KK[x]_{n,d}$ that vanishes at all points in $\bar{S}_{n,d}$ except for $s$. This shows that the conditions of vanishing at any point in $\bar{S}_{n,d}$ are linearly independent and since $|\bar{S}_{n,d}|=\dim H_{n,d}$, we see that $\dim I_d(\bar{S}_{n,d})=0$.

Let $M=x_1+\ldots+x_n$. For $i=1, \ldots, n$, let $h_i$ be the form defined as follows:
$$h_i = \prod_{k=0}^{s_i-1}(dx_i-kM).$$
\noindent It is clear that the degree of $h_i$ is $s_i$ and $h_i$ vanishes at all partitions in $\bar{S}_{n,d}$ with $i$-th part less than $s_i$. Now let $p_s$ be defined as:

$$p_s=\prod_{i=1}^{n}h_i.$$

\noindent The form $p_s$ has degree $\sum_{i=1}^n s_i=d$, and it does not vanish at $s$. However, for any other partition of $d$, there exists $i$ such that the $i$-th part is less than $s_i$. Then, $h_i$ will vanish for that $i$ and, thus, $p_s$ will vanish at any partition of $d$ except for $s$.
\end{proof}

As in the proof of Proposition \ref{wtf} let $M=x_1+\ldots+x_n$. For $i=1, \ldots ,n$, define a form $Q_i$ as follows:

\begin{equation}\label{qi}
Q_i=\prod_{k=0}^{d-1} (dx_i-{kM}).
\end{equation}

 We observe that each $Q_i$ vanishes at $S_{n,d}$. Indeed, let $s=[s_1: \ldots: s_n]\in S_{n,d}$ and consider $Q_i(s)$. We know that $M(s)=d$, because points in $S_{n,d}$ are partitions of $d$, and, therefore, the factor of $Q_i$ that corresponds to $k=s_i$ will vanish at $s$, forcing $Q_i(s)=0$. Hence, $Q_i \in I_d(S_{n,d})$ for all $i=1, \ldots, n$.

We will now show that the forms $Q_i$ actually form a basis of $I_d(S_{n,d})$. The fact that we have such a nicely factoring basis is what, eventually, allows us to prove that $S_{n,d}$ is $d$-independent.

\begin{prop}\label{prop:basis}
The forms $Q_i$ form a basis of $I_d(S_{n,d})$.
\end{prop}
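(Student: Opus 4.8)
The plan is to show that the $n$ forms $Q_1,\dots,Q_n$ defined in \eqref{qi} span $I_d(S_{n,d})$ and are linearly independent; since $\dim H_{n,d} = \binom{n+d-1}{d}$ and $|\bar S_{n,d}\setminus S_{n,d}| = n$, a dimension count (using the fact that the $n$ points $x_1^d,\dots,x_n^d$ of $\bar S_{n,d}$ impose independent conditions on $H_{n,d}$, which follows from Proposition \ref{wtf}) would force $\dim I_d(S_{n,d}) = n$, and then spanning plus the correct count gives a basis. Concretely, since $I_d(\bar S_{n,d}) = 0$ by Proposition \ref{wtf} and $\bar S_{n,d} = S_{n,d} \cup \{[e_1],\dots,[e_n]\}$, evaluation at the $n$ points $[e_i]$ gives a surjection $I_d(S_{n,d}) \to \RR^n$ with trivial kernel, hence $\dim I_d(S_{n,d}) = n$. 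So the entire content is: \emph{the $Q_i$ are linearly independent}.

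For linear independence I would evaluate at the points $[e_j] = [0:\dots:1:\dots:0] \in \bar S_{n,d}\setminus S_{n,d}$. At such a point $M(e_j) = 1$, and
$$Q_i(e_j) = \prod_{k=0}^{d-1}\bigl(d\,\delta_{ij} - k\bigr).$$
If $i \neq j$ this product has a factor with $k=0$, namely $d\cdot 0 - 0 = 0$, so $Q_i(e_j) = 0$. If $i = j$ the product is $\prod_{k=0}^{d-1}(d - k) = d!/0!\cdots$, i.e. $d \cdot (d-1)\cdots 1 = d! \neq 0$. Hence the $n\times n$ matrix $\bigl(Q_i(e_j)\bigr)$ is diagonal with nonzero diagonal entries, so $Q_1,\dots,Q_n$ are linearly independent. (This is the same trick as in the proof of Proposition \ref{wtf}.)

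It remains to argue $Q_1,\dots,Q_n$ span $I_d(S_{n,d})$; equivalently, together with any form dual to the remaining points they span all of $H_{n,d}$, but cleanest is: given $f \in I_d(S_{n,d})$, the form $g = f - \sum_{i=1}^n \frac{f(e_i)}{d!}\,Q_i$ vanishes at every point of $S_{n,d}$ (since each $Q_i$ does) and, by the diagonal evaluation above, also at every $[e_j]$; hence $g \in I_d(\bar S_{n,d}) = 0$, so $f = \sum_i \frac{f(e_i)}{d!} Q_i$ lies in the span. Combined with linear independence, this shows $\{Q_i\}$ is a basis of $I_d(S_{n,d})$. The main (and essentially only) obstacle is verifying that $Q_i(e_j) = 0$ for $i\neq j$ and $\neq 0$ for $i = j$ — a short computation hinging on the $k=0$ factor vanishing off-diagonal and on $M$ evaluating to $1$ at the coordinate points; everything else is formal once Proposition \ref{wtf} is in hand.
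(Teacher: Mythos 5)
Your proof is correct and follows essentially the same route as the paper: linear independence via evaluating the $Q_i$ at the coordinate points $e_j$ (getting a diagonal matrix with entries $d!$), and spanning by subtracting $\sum_i \frac{f(e_i)}{d!}Q_i$ from a given $f \in I_d(S_{n,d})$ and invoking Proposition \ref{wtf} to conclude the difference vanishes identically. The extra dimension-count remark at the start is harmless but, as you note yourself, unnecessary once the direct spanning argument is given.
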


\begin{proof}
We first show that the forms $Q_i$ are linearly independent. Let $e_1, \ldots, e_n$ be the standard basis vectors of $\RR\PP^{n-1}$. It is easy to see that $Q_i(e_j)=0$ for $i \neq j$ since $x_i$ divides $Q_i$. On the other hand, $Q_i(e_i)=d!$. Therefore, if there exist $\alpha_i \in \RR$ such that $\alpha_1Q_1+\ldots+\alpha_nQ_n=0$, then, by evaluating this linear combination at $e_i$, we see that $\alpha_i=0$ and this works for all $i$. Thus, the forms $Q_i$ are linearly independent.

We now show that the forms $Q_i$ span $I_d(S_{n,d})$. Let $p\in I_d(S_{n,d})$ and let $\beta_i=p(e_i)$. Consider the form $$\bar{p}=p-\sum_{i=1}^{n}\frac{\beta_i}{d!}Q_i.$$ It is clear that $\bar{p}$ vanishes at the standard basis vectors $e_i$. Therefore, $\bar{p}$ vanishes not only on $S_{n,d}$ but also on $\bar{S}_{n,d}$. By Proposition \ref{wtf}, it follows that $\bar{p}=0$ and, therefore, $p$ is in the span of $Q_i$.

\end{proof}

We now show that the set $S_{n,d}$ satisfies the two conditions of $d$-independence from Definition \ref{def:dIndependence}.

\begin{lemma}
The set $S_{n,d}$ forces no additional zeroes for forms of degree $d$.
\end{lemma}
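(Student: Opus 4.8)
The plan is to show that the forms $Q_i$ of \eqref{qi}, which by Proposition \ref{prop:basis} constitute a basis of $I_d(S_{n,d})$, share no common zero in $\CC\PP^{n-1}$ outside of $S_{n,d}$. Equivalently, I must show that if $s = [s_1 : \cdots : s_n] \in \CC\PP^{n-1}$ satisfies $Q_i(s) = 0$ for every $i = 1,\dots,n$, then $s \in S_{n,d}$. The key observation is that $Q_i = \prod_{k=0}^{d-1}(d x_i - kM)$ vanishes at $s$ precisely when $d s_i = k M(s)$ for some $k \in \{0,1,\dots,d-1\}$, where $M = x_1 + \cdots + x_n$.

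First I would dispose of the case $M(s) = 0$. If $M(s) = 0$, then $Q_i(s) = \prod_{k=0}^{d-1}(d s_i) = d^d s_i^d$, so $Q_i(s) = 0$ forces $s_i = 0$; doing this for all $i$ gives $s = 0$, which is not a point of $\CC\PP^{n-1}$, a contradiction. Hence $M(s) \neq 0$, and after rescaling the homogeneous coordinates I may assume $M(s) = d$, i.e. $\sum s_i = d$. Now the vanishing condition $Q_i(s)=0$ becomes: for each $i$ there exists $k_i \in \{0,1,\dots,d-1\}$ with $s_i = k_i$. So each coordinate $s_i$ is a nonnegative integer strictly less than $d$, and $\sum_i s_i = d$. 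This exactly says $s$ corresponds to a partition of $d$ with each part at most $d-1$, hence with at least two nonzero parts (a partition of $d$ into one part would need a single part equal to $d$, which is excluded since every $k_i \le d-1$). Therefore $s \in S_{n,d}$, which is what we wanted.

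I would then note for completeness that no genuinely new zero is introduced over $\CC$ as opposed to $\RR$: the argument above works verbatim with complex coordinates, since the only structural facts used are that $M(s)$ is a scalar and that the factors of $Q_i$ are linear. Thus the common complex zero locus of $I_d(S_{n,d})$ is exactly $S_{n,d}$, establishing condition \eqref{first cond} of Definition \ref{def:dIndependence} for $S_{n,d}$.

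The proof is essentially an elementary combinatorial computation once the factored basis $\{Q_i\}$ is in hand, so there is no serious obstacle; the only point requiring a little care is the normalization step $M(s) = d$ and checking that the $M(s) = 0$ degenerate case cannot produce a spurious point. All the heavy lifting was already done in Propositions \ref{wtf} and \ref{prop:basis}, which is precisely why having the explicitly factoring basis is so valuable here.
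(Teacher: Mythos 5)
Your proof is correct and follows essentially the same route as the paper: split into the cases $M(s)=0$ (forcing $s=0$, impossible projectively) and $M(s)\neq 0$ (normalize $M(s)=d$, so each coordinate is an integer in $\{0,\dots,d-1\}$ summing to $d$, hence a partition with at least two nonzero parts). The only cosmetic difference is that you spell out explicitly that the argument is valid over $\CC$ and that a single-part partition is excluded, both of which the paper leaves implicit.
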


\begin{proof}
Since by Proposition \ref{prop:basis} the forms $Q_i$ form a basis of $I_d(S_{n,d})$, the statement of the lemma is equivalent to showing that $S_{n,d}$ is projectively equal to $\cap_{i=1}^n Z(Q_i).$

Let $v=[v_1: \ldots: v_n] \in \cap_{i=1}^n Z(Q_i)$ be a nonzero point and first suppose that $v_1+\ldots+v_n=0$, i.\,e., $M(v)=0$. Therefore, by Equation \eqref{qi}, we see that $Q_i(v)=d^dv_i^d$. Since by assumption $Q_i(v)=0$ for all $i$, it follows that $v=0$, which is a contradiction.

Now suppose that $v_1+\ldots+v_n \neq 0$. By homogeneity we can assume that $v_1+\ldots+v_n=d$. In this case, from Equation \eqref{qi}, it follows that $Q_i(v)=d^dv_i(v_i-1)\cdots(v_i-d+1).$ Since $Q_i(v)=0$ for all $i$, we infer that each $v_i$ is a nonnegative integer between 0 and $d-1$ and $v_1+\ldots+v_n=d$. In other words, $v \in S_{n,d}$.
\end{proof}

We know that $|S_{n,d}|=\binom{n+d-1}{d}-n$. For the second condition of $d$-independence, we need to show that for any $s \in S_{n,d}$ the vector space of forms double vanishing at $s$ and vanishing at the rest of $S_{n,d}$ with multiplicity 1 has codimension $|S_{n,d}|+n-1=\binom{n+d-1}{d}-1$ in $H_{n,d}$. Since $\dim H_{n,d}=\binom{n+d-1}{d}$, we thus need to show that the vector space of forms double vanishing at any $s\in S_{n,d}$ and vanishing at the rest of $S_{n,d}$ with multiplicity 1 is one-dimensional. This will follow from the next lemma.

\begin{lemma}
For every point $s \in S_{n,d}$, there is a unique (up to a constant multiple) form in $I_d(S_{n,d})$ singular at $s$.
\end{lemma}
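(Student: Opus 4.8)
The plan is to exploit the explicit factored basis $Q_1,\dots,Q_n$ of $I_d(S_{n,d})$ from Proposition~\ref{prop:basis}, which turns the lemma into a short linear‑algebra computation with the gradient vectors $\nabla Q_i(s)$. Fix $s=[s_1:\dots:s_n]\in S_{n,d}$ and normalize the representative so that $s_1+\dots+s_n=d$, hence $M(s)=d$; because $s$ has at least two nonzero parts, $0\le s_i\le d-1$ for every $i$. Any $p\in I_d(S_{n,d})$ already vanishes at $s$, so it is singular at $s$ exactly when $\nabla p(s)=0$, and by Euler's identity $\ip{\nabla p(s)}{s}=d\,p(s)=0$, so this is a homogeneous linear system of $n-1$ independent equations in the $n$ coefficients $\alpha_1,\dots,\alpha_n$ of $p=\sum_i\alpha_iQ_i$. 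Thus the space of singular forms is automatically at least one‑dimensional, and the whole content of the lemma is that this system has rank exactly $n-1$.

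The key step is to compute $\nabla Q_i(s)$. Writing $Q_i=\prod_{k=0}^{d-1}(dx_i-kM)$ and using $M(s)=d$, the $k$-th factor evaluates to $d(s_i-k)$ at $s$, which vanishes precisely for $k=s_i$; this index does occur among $0,\dots,d-1$ exactly because $s\in S_{n,d}$ forces $s_i\le d-1$. Hence by the product rule only the term differentiating this single vanishing factor survives, and
$$\nabla Q_i(s)=c_i\,\nabla\!\left(dx_i-s_iM\right)(s)=c_i\left(d\,e_i-s_i\mathbf 1\right),\qquad \mathbf 1=(1,\dots,1),$$
where $c_i=\prod_{k\ne s_i}d(s_i-k)=d^{\,d-1}(-1)^{d-1-s_i}s_i!\,(d-1-s_i)!\ne 0$. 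Then for $p=\sum_i\alpha_iQ_i$ the $j$-th coordinate of $\nabla p(s)$ is $d\,c_j\alpha_j-\sum_{i=1}^n s_ic_i\alpha_i$, whose last term is independent of $j$; setting $\nabla p(s)=0$ forces $c_j\alpha_j$ to be the same for all $j$, i.e.\ $(\alpha_1,\dots,\alpha_n)=\lambda(c_1^{-1},\dots,c_n^{-1})$, and conversely every such tuple solves the system since then $\sum_i s_ic_i\alpha_i=\lambda\sum_i s_i=\lambda d=d\,c_j\alpha_j$. Therefore the singular forms in $I_d(S_{n,d})$ are exactly the line spanned by $\sum_{i=1}^n c_i^{-1}Q_i$.

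The only delicate point in this argument is the gradient computation — specifically verifying that exactly one factor of $Q_i$ vanishes at $s$, which is where the hypothesis $s\in S_{n,d}$ (as opposed to $s\in\bar S_{n,d}$, where a coordinate $s_i=d$ could occur) is genuinely used; after that, everything is a one‑line linear algebra check. I would also note that this lemma is precisely the remaining half of the second condition in Definition~\ref{def:dIndependence}: the forms double vanishing at $s$ and vanishing at the rest of $S_{n,d}$ to order $1$ are exactly the forms of $I_d(S_{n,d})$ singular at $s$, so their being one‑dimensional is equivalent to the claimed codimension $|S_{n,d}|+n-1=\binom{n+d-1}{d}-1$ in $H_{n,d}$.
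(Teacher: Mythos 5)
Your proof is correct and takes essentially the same route as the paper: both exploit the factored basis $Q_1,\dots,Q_n$ of $I_d(S_{n,d})$, note that exactly one factor of $Q_i$ vanishes at $s$ (using $s_i\le d-1$), so that $\nabla Q_i(s)=c_i\,(d\,e_i-s_i\mathbf{1})$ with $c_i\neq 0$, and then reduce the lemma to linear algebra on the resulting coefficient matrix, which is $d$ times the identity minus a rank-one matrix. The only cosmetic difference is that the paper establishes $\mathrm{rank}=n-1$ via the rank inequality for $dI-C$ together with the all-ones kernel vector, whereas you solve the system explicitly and exhibit the unique singular form $\sum_i c_i^{-1}Q_i$; the underlying argument is the same.
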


\begin{proof}
Let $s=[s_1:\ldots: s_n] \in S_{n,d}$ and let $p \in I_d(S_{n,d})$ be a form singular at $s$.

Since by Proposition \ref{prop:basis} the forms $Q_i$ form a basis of $I_d(S_{n,d})$, we may assume that $p=\alpha_1Q_1+\ldots+\alpha_nQ_n$ for certain $\alpha_i \in \RR$. Now let $A=(a_{ij})$ be the $n \times n$ matrix with entries $$a_{ij}=\frac{\partial Q_i}{\partial x_j}(s).$$

The statement of the lemma is equivalent to showing that $\rank\,A = n-1$. Recall from Equation \eqref{qi} the definition of $Q_i$:

$$Q_i=\prod_{k=0}^{d-1} dx_i-{kM}.$$

The form $Q_i$ vanishes at $s$, because the term $dx_i-{s_iM}$ corresponding to $k=s_i$ vanishes at $s$. Therefore, the only nonzero term in $\displaystyle \frac{\partial Q_i}{\partial x_j}$ evaluated at $s$ will come from differentiating out $dx_i-{s_iM}$. Now, for $1\leq i\leq n$, let

$$P_{s_i}=\frac{Q_i}{dx_i-{s_iM}}.$$
We observe that $P_{s_i}(s) \neq 0$ since we removed from $Q_i$ the only factor that vanishes at $s$.

Recall that $M=x_1+\ldots+x_n$ and, therefore, if we differentiate out $dx_i-{s_iM}$ from $Q_i$ with respect to $x_j$ and evaluate it at $s$, we see that

 \begin{displaymath}
   \frac{\partial Q_i}{\partial x_j}(s) = \left\{
     \begin{array}{lr}
       P_{s_i}(s)\left(d-{s_j}\right) &  \text{if} \hspace{2mm}i=j\\
       -P_{s_i}(s)s_j  & \text{if} \hspace{2mm}i \neq j.
     \end{array}
   \right.
\end{displaymath}

Since $P_{s_i}(s) \neq 0$, we can divide the $i$-th row of $A$ by ${P_{s_i}(s)}$ to obtain a matrix $B=(b_{ij})$, where

 \begin{displaymath}
   b_{ij} = \left\{
     \begin{array}{lr}
       d-s_j &  \text{if} \hspace{2mm}i=j\\
       -s_j  & \text{if} \hspace{2mm}i \neq j.
     \end{array}
   \right.
\end{displaymath}

By construction, $\rank\, B = \rank\, A$.

Since $s$ is a partition of $d$, it is clear that the vector consisting of all 1s is in the kernel of $B$. Now let $C=(c_{ij})$ be the matrix with $j$-th column having the same entry $s_j$, i.\,e., $c_{ij}=s_j$. We observe that the rank of $C$ is 1 and $B=dI-C$, where $I$ is the identity matrix. Therefore, we know that $\text{rank} \hspace{.5mm} B \geq \text{rank} \hspace{.5mm} I - \text{rank} \hspace{.5mm} C=n-1$. Since we already found a vector in the kernel of $B$, it follows that the rank of $B$ is $n-1$.
\end{proof}

We have now shown that the set $S_{n,d}$ is $d$-independent and together with Corollary \ref{cor:Zariski} this shows that $d$-independence is a 
generic condition for sets of $k$ points in $\RR\PP^{n-1}$ with $k\leq \binom{n+d-1}{d}-n$. In particular, this proves Proposition \ref{prop:dIndependent}.

\section{Complete characterization of ternary forms}
\label{Sec:ternary}
In this section, we study the case of ternary forms and provide a complete characterization for the occurence of dimensional differences between exposed faces of the cones $P_{3,2d}$ and $\Sigma_{3,2d}$. 
The desired characterization will require some preparatory lemmas. 

\begin{lemma}\label{lem1}
 Let $\Gamma \subset\RR\PP^{n-1}$ be a finite set and $I = I(\Gamma)\subset \mathbb R[x]$ be the vanishing ideal. 
Let $\overline{I^m}$ be the saturation of the ordinary $m$-th power of $I$. Then 
$$\overline{I^m} = I^{(m)}.$$
\end{lemma}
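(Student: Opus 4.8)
The plan is to prove the two inclusions separately, analysing both the symbolic power and the ordinary power locally at the points of $\Gamma$. Set $R=\RR[x]=\RR[x_1,\dots,x_n]$, let $\mathfrak m=(x_1,\dots,x_n)$ be the irrelevant ideal, and for $s\in\Gamma$ let $\mathfrak p_s\subset R$ be the homogeneous prime ideal of the point $s$. After a linear change of coordinates $\mathfrak p_s$ is generated by $n-1$ independent linear forms, so $R_{\mathfrak p_s}$ is a regular local ring of dimension $n-1$; in particular $\mathfrak p_s^{\,m}$ is $\mathfrak p_s$-primary and the symbolic and ordinary powers of $\mathfrak p_s$ agree. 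Since $I=I(\Gamma)=\bigcap_{s\in\Gamma}\mathfrak p_s$ is radical with minimal primes exactly the $\mathfrak p_s$, the definition of the symbolic power gives $I^{(m)}=\bigcap_{s\in\Gamma}\big(I^mR_{\mathfrak p_s}\cap R\big)$. Because distinct points of $\rpn$ have incomparable defining primes, localizing at $\mathfrak p_s$ turns every $\mathfrak p_{s'}$ with $s'\neq s$ into the unit ideal, so $IR_{\mathfrak p_s}=\mathfrak p_sR_{\mathfrak p_s}$ and hence $I^mR_{\mathfrak p_s}\cap R=\mathfrak p_s^{\,m}R_{\mathfrak p_s}\cap R=\mathfrak p_s^{\,m}$; thus $I^{(m)}=\bigcap_{s\in\Gamma}\mathfrak p_s^{\,m}$.

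For the inclusion $\overline{I^m}\subseteq I^{(m)}$ I would check that $I^{(m)}$ is already saturated. From $I\subseteq\mathfrak p_s$ we get $I^m\subseteq\mathfrak p_s^{\,m}$ for every $s$, hence $I^m\subseteq I^{(m)}$. Moreover $\mathrm{Ass}(R/I^{(m)})\subseteq\bigcup_{s}\mathrm{Ass}(R/\mathfrak p_s^{\,m})=\{\mathfrak p_s:s\in\Gamma\}$, and none of the $\mathfrak p_s$ equals $\mathfrak m$ since each $s$ is an honest point of $\rpn$ (so $\mathrm{ht}\,\mathfrak p_s=n-1$). Therefore $I^{(m)}$ has no $\mathfrak m$-primary component, i.e. $\overline{I^{(m)}}=I^{(m)}$. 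Applying the inclusion-preserving saturation operator to $I^m\subseteq I^{(m)}$ yields $\overline{I^m}\subseteq\overline{I^{(m)}}=I^{(m)}$.

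The substantive direction is $I^{(m)}\subseteq\overline{I^m}$, where I would localize the finitely generated graded module $N:=I^{(m)}/I^m$ at an arbitrary prime $\mathfrak p\neq\mathfrak m$. If $I\not\subseteq\mathfrak p$ then $(I^m)_{\mathfrak p}=R_{\mathfrak p}$, so $N_{\mathfrak p}=0$. Otherwise $\mathfrak p$ contains some minimal prime $\mathfrak p_s$ of $I$ (it cannot contain two of them, since the sum of the ideals of two distinct points has radical $\mathfrak m$, which would force $\mathfrak p=\mathfrak m$); by the local computation above $(I^m)_{\mathfrak p_s}=(I^{(m)})_{\mathfrak p_s}=\mathfrak p_s^{\,m}R_{\mathfrak p_s}$, so $N_{\mathfrak p_s}=0$, and localizing further at $\mathfrak p$ gives $N_{\mathfrak p}=0$ as well. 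Hence $\mathrm{Supp}(N)\subseteq\{\mathfrak m\}$, so $\mathfrak m^kN=0$ for some $k$, i.e. $\mathfrak m^kI^{(m)}\subseteq I^m$, which means $I^{(m)}\subseteq I^m:\mathfrak m^\infty=\overline{I^m}$. Combining the two inclusions proves $\overline{I^m}=I^{(m)}$. The main obstacle is precisely this last step — recognizing that $I^m$ and its symbolic power can differ only in an $\mathfrak m$-primary component — and the elementary fact that makes it go through is that the defining primes of distinct points of projective space are pairwise incomparable with $\mathfrak m$-primary sum.
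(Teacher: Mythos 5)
Your overall route is the same as the paper's: both arguments come down to identifying $I^{(m)}=\bigcap_{s\in\Gamma}\mathfrak p_s^{\,m}$ and showing that $I^m$ can differ from $I^{(m)}$ only in an $\mathfrak m$-primary part, which saturation removes; the paper phrases this through the primary decomposition of $I^m$, while you phrase it through the support of $N=I^{(m)}/I^m$, and your version actually supplies details the paper only asserts. The one place where your write-up does not hold together as stated is the step ``$N_{\mathfrak p_s}=0$, and localizing further at $\mathfrak p$ gives $N_{\mathfrak p}=0$ as well.'' Since $\mathfrak p_s\subseteq\mathfrak p$, it is $R_{\mathfrak p_s}$ that is a further localization of $R_{\mathfrak p}$, not the other way around: vanishing of $N_{\mathfrak p}$ would imply vanishing of $N_{\mathfrak p_s}$, but $N_{\mathfrak p_s}=0$ by itself gives no information about $N_{\mathfrak p}$ (for instance, $\mathfrak p$ could be the maximal ideal of a nonzero point on the affine cone over $s$). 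As written, the implication runs in the wrong direction.

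The gap is easy to close with material you already have. For a prime $\mathfrak p\neq\mathfrak m$ containing $I$, you showed that $\mathfrak p$ contains exactly one of the $\mathfrak p_s$; hence every $\mathfrak p_{s'}$ with $s'\neq s$ becomes the unit ideal in $R_{\mathfrak p}$, so $IR_{\mathfrak p}=\mathfrak p_sR_{\mathfrak p}$, $(I^m)_{\mathfrak p}=\mathfrak p_s^{\,m}R_{\mathfrak p}$, and $(I^{(m)})_{\mathfrak p}=\bigcap_{s'}(\mathfrak p_{s'}^{\,m})_{\mathfrak p}=\mathfrak p_s^{\,m}R_{\mathfrak p}$, giving $N_{\mathfrak p}=0$ directly --- that is, repeat your local computation at $\mathfrak p$ itself rather than at $\mathfrak p_s$. (Alternatively, since $N$ is a finitely generated graded module, $\mathrm{Ann}(N)$ is homogeneous, so it suffices to check vanishing at homogeneous primes, and the only homogeneous primes containing $I$ are the $\mathfrak p_s$ and $\mathfrak m$.) With this repair your proof is complete and correct, and in substance it coincides with the paper's argument.
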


\begin{proof}
Consider the primary decomposition $I = \bigcap_{p \in S} I(p)$, where $I(p) = \{f \in \mathbb R[x] \ | \ f(p) = 0\}$. Then, by definition of symbolic powers, we have
 $I^{(m)} = \bigcap_{p \in S} I(p)^m$. Moreover, the primary decomposition of $I^m$ contains $I(p)^m$ for all $p \in S$ and an additional component $Q$ associated
to the maximal homogeneous ideal $\mathfrak m = (x_1,\dots,x_n)$. By definition $\overline{I^m} = \bigcup_{j \geq 0} I^m : \mathfrak m^j$. From the previous discussion, we obtain  
\begin{align*}
I^m : \mathfrak m^j & = \left(\bigcap_{p \in S} I(p)^m  \cap Q\right) :  \mathfrak m^j \\
& = \left(\bigcap_{p \in S} I(p)^m : \mathfrak m^j\right)  \cap Q :  \mathfrak m^j \\
& = (I^{(m)} : \mathfrak m^j)  \cap (Q :  \mathfrak m^j) 
\end{align*}

Since $Q$ is $\mathfrak m$-primary, for sufficiently large $j$, it follows that $Q :  \mathfrak m^j = \mathbb R[x]$. Moreover,
$I^{(m)} : \mathfrak m^j = I^{(m)}$, thus, $I^m : \mathfrak m^j = I^{(m)}$, for sufficiently large $j$.
\end{proof}

Note that, as a consequence of the above lemma, we clearly have that $(\overline{I^2})_{2d} = (I^{(2)})_{2d}.$

Given an ideal $I\subset \mathbb R[x]$, let $\alpha(I)$ be the minimum degree of a generator of $I$.
We make the following simple observation.




\begin{lemma}\label{lem3}
 Let $I\subset \mathbb R[x]$ be a homogeneous ideal and let $m=\alpha(I)$. Then $(I^2)_{2m}=(I_m)^2$.
\end{lemma}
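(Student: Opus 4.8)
The plan is to unwind the definitions on both sides and show the two vector spaces of forms of degree $2m$ coincide. The inclusion $(I_m)^2 \subseteq (I^2)_{2m}$ is immediate: if $f,g \in I_m$, then $fg \in I^2$ and $\deg(fg)=2m$, so $fg \in (I^2)_{2m}$, and since $(I^2)_{2m}$ is a vector space it contains the span of all such products, which is exactly $(I_m)^2$.

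For the reverse inclusion, I would use that $m = \alpha(I)$ is the minimal degree of a generator. Pick a minimal homogeneous generating set $g_1,\dots,g_r$ of $I$, and let $g_1,\dots,g_s$ (with $s\le r$) be those of degree exactly $m$; all remaining generators have degree $\ge m+1$. A general element of $I^2$ is a sum of terms $h\cdot g_i$ with $h \in I$, and expanding $h$ again in the generators, a general element of $I^2$ is a $\mathbb{R}[x]$-linear combination of the products $g_ig_j$. An element of $(I^2)_{2m}$ is therefore a degree-$2m$ combination $\sum_{i\le j} p_{ij} g_i g_j$ where $p_{ij}$ is a form of degree $2m - \deg g_i - \deg g_j$. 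The key numerical point: if either $g_i$ or $g_j$ has degree $>m$, then $\deg g_i + \deg g_j > 2m$ (since both degrees are $\ge m$), forcing $p_{ij}=0$. Hence only the products $g_ig_j$ with $1\le i\le j\le s$ survive, and these all lie in $(I_m)^2$ because $g_i,g_j \in I_m$. Thus every element of $(I^2)_{2m}$ lies in $(I_m)^2$.

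There is essentially no hard obstacle here; the only thing to be careful about is the bookkeeping that a general homogeneous element of $I^2$ of degree $2m$ really is a combination of the $g_ig_j$ with polynomial coefficients of the stated degrees, which follows formally from the fact that $I^2$ is generated (as an ideal) by $\{g_ig_j\}$ and that one may take the combination to be homogeneous of the prescribed degree by extracting graded components. One should also note the degenerate possibility that $2m - \deg g_i - \deg g_j$ is negative when a generator of degree $>m$ is involved, in which case the coefficient is zero for the trivial reason that there are no forms of negative degree — this is the same conclusion as above. I would present the argument in these two inclusions and keep the coefficient-degree computation to a single line.
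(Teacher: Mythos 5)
Your proposal is correct and follows essentially the same route as the paper: the easy inclusion $(I_m)^2\subseteq (I^2)_{2m}$, and for the reverse inclusion a degree count showing that in any expression of a degree-$2m$ element of $I^2$ as products of elements of $I$, both factors must have degree exactly $m=\alpha(I)$. Your version phrases this via a homogeneous generating set of $I$ and polynomial coefficients, which is just a slightly more explicit bookkeeping of the same argument the paper gives.
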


\begin{proof}
 Since $I_m\subset I$, we always have the inclusion $(I_m)^2\subset (I^2)_{2m}$. On the other hand, if $f\in (I^2)_{2m}$, then we can write 
$f=\sum_{a\in J}g_a\cdot h_a$ for certain polynomials $g_a,h_a\in I$ and $J\subset \mathbb{N}^n$ finite. Moreover, 
since $2m=\deg(f)=\deg(g_a)+\deg(h_a)$ and $\alpha(I)=m$, we conclude $\deg(f_a)=\deg(g_a)=m$ for all $a\in J$. Hence, $g_a,h_a\in I_m$ for all $a\in J$.
\end{proof}

 Recall that for an ideal  $I\subset \mathbb R[x]$ the \emph{saturation degree} of $I$ is defined as 
$$\satdeg(I) = \min\{s \ | \ \overline{I}_s = I_s\},$$
see, e.\,g., \cite{Eisenbud}. Moreover, we use $\reg(I)$ to denote the (Castelnuovo-Mumford) regularity of $I$. 

\begin{lemma}\label{lem4}
 Let $\Gamma \subset\RR\PP^{n-1}$ be a finite set and let $I = I(\Gamma)\subset \mathbb R[x]$ be the vanishing ideal. If $\alpha(I) = \reg(I)$, then $\satdeg(I^2) \leq \alpha(I^2) = 2\alpha(I)$.
\end{lemma}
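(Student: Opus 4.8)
The plan is to prove the two assertions of Lemma \ref{lem4} separately, writing $R=\mathbb R[x]$ and $m=\alpha(I)$, and to exploit throughout that the hypothesis $\alpha(I)=\reg(I)=m$ means exactly that $I$ admits an $m$-linear minimal free resolution; in particular $I$ is generated in the single degree $m$. (For a finite point set this actually forces $|\Gamma|=\binom{n+m-2}{m-1}=\dim H_{n,m-1}$, so that $\Gamma$ imposes independent conditions filling up degree $m-1$; I would not strictly need this, but it explains why the lemma is relevant precisely at the ``boundary'' cases of Theorem \ref{thm:ternary}.)

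First I would dispose of the equality $\alpha(I^2)=2\alpha(I)$. Since $I$ is generated in degree $m$, the ideal $I^2$ is generated by the products $q_iq_j$ of a basis $q_1,\dots,q_k$ of $I_m$, all of degree $2m$; hence $(I^2)_s=0$ for $s<2m$, while $(I^2)_{2m}=(I_m)^2\neq 0$ by Lemma \ref{lem3}. Consequently $2m$ is the least degree in which $I^2$ is nonzero and also the least degree of a minimal generator, i.e. $\alpha(I^2)=2m=2\alpha(I)$.

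For the saturation degree I would first reduce to a regularity estimate via the general inequality $\satdeg(J)\le\reg(J)$, valid for every homogeneous ideal $J\subset R$: one has $\overline J/J\cong H^0_{\mathfrak m}(R/J)$ as graded modules, and this local cohomology module vanishes in all degrees $\ge\reg(R/J)+1=\reg(J)$, so $\overline J_s=J_s$ for $s\ge\reg(J)$. Applying this with $J=I^2$, it suffices to show $\reg(I^2)\le 2m$. For this I would use that $R/I$ is a one–dimensional Cohen–Macaulay ring (the coordinate ring of the finite set $\Gamma$), which places us in the range where subadditivity of Castelnuovo–Mumford regularity for products of ideals is known to hold; it yields $\reg(I^2)\le\reg(I)+\reg(I)=2m$. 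Combining the two inequalities gives $\satdeg(I^2)\le\reg(I^2)\le 2m=\alpha(I^2)$, as claimed. (Alternatively, one can sidestep the subadditivity citation: since $I$ has an $m$-linear resolution, one can build a resolution of $I^2$ by a mapping cone from the linear resolution of $I$ and the Koszul syzygies, and check directly that $I^2$ is $2m$-regular, the hypothesis $\dim R/I\le 1$ ensuring there is no contribution from higher local cohomology.)

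The only real obstacle is this last step, the bound $\reg(I^2)\le 2m$; the rest is bookkeeping with the fact that $I$ is generated in a single degree. In the spirit of the paper one could instead try to prove the crux degree-$2m$ identity $(I_m)^2=I^{(2)}_{2d}\big|_{2d=2m}$, i.e. that every form of degree $2m$ vanishing doubly on $\Gamma$ lies in $(I_m)^2$, directly from $R_{m-1}\xrightarrow{\ \sim\}\mathbb R^{\Gamma}$; but one still needs a regularity-type input to propagate the equality $(I^2)_s=(\overline{I^2})_s=I^{(2)}_s$ to all $s>2m$, so invoking $\reg(I^2)\le 2m$ seems unavoidable one way or another.
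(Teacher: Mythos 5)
Your argument is correct, but it follows a genuinely different route from the paper's. You bound $\satdeg(I^2)\le\reg(I^2)$ via $\overline{I^2}/I^2\cong H^0_{\mathfrak m}(R/I^2)$ and then invoke the theorem that $\reg(I^k)\le k\,\reg(I)$ whenever $\dim R/I\le 1$ (Geramita--Gimigliano--Pitteloud for points, Chandler in general; Conca--Herzog for products), which with $\reg(I)=\alpha(I)=m$ gives $\reg(I^2)\le 2m$; together with the easy identity $\alpha(I^2)=2\alpha(I)$ this proves the lemma. Be careful to cite that power/product result precisely, since subadditivity of regularity fails for general ideals --- but you correctly restrict to the one-dimensional case, where it is a theorem, so there is no gap. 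The paper instead argues directly from the primary decomposition established in the proof of Lemma \ref{lem1}: it writes $I^2=I^{(2)}\cap Q$ with $Q$ the embedded $\mathfrak m$-primary component, asserts that under the hypothesis one may take $Q=\mathfrak m^{2\alpha(I)}$, and concludes that $(I^2)_j=(I^{(2)})_j=(\overline{I^2})_j$ for all $j\ge 2\alpha(I)$, whence $\satdeg(I^2)\le 2\alpha(I)$; no regularity of powers is needed there, though the identification of $Q$ is stated rather tersely. In short, your proof trades the paper's primary-decomposition shortcut for a reduction to a standard (but nontrivial) regularity-of-powers theorem: yours makes the role of the hypothesis $\alpha(I)=\reg(I)$ completely transparent and rests on citable results, while the paper's is shorter and stays entirely within the ideal-theoretic framework already set up in Lemma \ref{lem1}. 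Your side remarks are also fine: $\alpha(I)=\reg(I)$ does force $I$ to be generated in the single degree $m$ with linear resolution, and for points it forces $|\Gamma|=\dim H_{n,m-1}$, consistent with how Lemma \ref{lem4} is applied in the proof of Theorem \ref{thm:ternary}.
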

 
\begin{proof}
As we have seen in the proof of Lemma \ref{lem1}, it holds that $I^2 = I^{(2)} \cap Q$, where $Q$ is $\mathfrak m$-primary. More precisely,
one has $Q = \mathfrak m^{2\alpha(I)}$. So, for $j \in \mathbb N$, one has $(I^2)_j = (I^{(2)} \cap \mathfrak m^{2\alpha(I)})_j$. 
For $j \geq 2\alpha(I)$, the ideal $\mathfrak m^{2\alpha(I)}$ contains all monomials of degree $j$. Therefore, $(I^2)_j = I^{(2)}_j$ in this case and,
by Lemma \ref{lem1}, it follows that $(I^2)_j = (\overline{I^2})_j$, i.\,e., $\satdeg(I^2) \leq 2\alpha(I)$. The equality $\alpha(I^2) = 2\alpha(I)$ is true
in general.
\end{proof}

We will need the following definition of a set $\Gamma\subset \mathbb R^n$ being in general linear position, which naturally extends to $\RR\PP^{n-1}$. 
\begin{definition}
 Let $\Gamma \subset\mathbb R^n$ be a finite set. $\Gamma$ is in \emph{general linear position} if one of the following conditions holds
\begin{itemize}
 \item[(i)] $|\Gamma| \leq n$ and $\dim \Aff(\Gamma) = |\Gamma| - 1$ 
\item[(ii)] $|\Gamma| \geq n+1$ and no $n$ of the points in $\Gamma$ lie on a common hyperplane.
\end{itemize}
\end{definition}

Note that any set $\Gamma$ with $|\Gamma| \leq n$ that is in general linear position can be extended to a set in general linear position of greater cardinality. 

\begin{lemma}\label{lem5}
Let $\Gamma \subset \RR\PP^{2}$ be in general linear position with $|\Gamma| = \binom{d+1}{2}$. Then $\reg(I_d(\Gamma)) = d$.
\end{lemma}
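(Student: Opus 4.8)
The statement to prove is that for $\Gamma \subset \RR\PP^2$ in general linear position with $|\Gamma| = \binom{d+1}{2}$, the regularity of the vanishing ideal $I = I_d(\Gamma)$ (more precisely $\reg(I(\Gamma))$) equals $d$. Note first that $\reg(I(\Gamma)) \geq d$ is automatic: since $|\Gamma| = \binom{d+1}{2} = \dim H_{3,d-1}$ and the points are in general linear position, the space of forms of degree $d-1$ vanishing on $\Gamma$ is zero (the evaluation map $H_{3,d-1} \to \RR^{|\Gamma|}$ is injective, hence bijective), so $\alpha(I(\Gamma)) = d$ and therefore $\reg(I(\Gamma)) \geq \alpha(I(\Gamma)) = d$. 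The content is the upper bound $\reg(I(\Gamma)) \leq d$.

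The plan is to use the standard criterion for points in $\PP^2$: for a finite set $\Gamma \subset \RR\PP^2$ with Hilbert function $h_{\RR[x]/I(\Gamma)}$, one has $\reg(I(\Gamma)) \leq d$ if and only if the Hilbert function of $\RR[x]/I(\Gamma)$ reaches its eventual value $|\Gamma|$ at degree $d-1$, i.e. $h_{\RR[x]/I(\Gamma)}(d-1) = |\Gamma|$ (this is the fact that for points in $\PP^2$, regularity of $I(\Gamma)$ equals one plus the smallest degree in which the Hilbert function becomes constant; this is a consequence of the Auslander--Buchsbaum formula and the fact that the coordinate ring of points in $\PP^2$ is Cohen--Macaulay of dimension $1$, so its minimal free resolution has length $2$ and regularity is read off from the Hilbert function). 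So the whole argument reduces to computing $h_{\RR[x]/I(\Gamma)}(d-1)$. Since $\Gamma$ is in general linear position and $|\Gamma| = \dim H_{3,d-1}$, the evaluation map $H_{3,d-1} \to \RR^{|\Gamma|}$ is injective, hence an isomorphism, so $I_{d-1}(\Gamma) = 0$ and $h_{\RR[x]/I(\Gamma)}(d-1) = \dim H_{3,d-1} = \binom{d+1}{2} = |\Gamma|$. Combined with the observation that the Hilbert function is nondecreasing up to its eventual value (true for points in $\PP^2$ since the coordinate ring is Cohen--Macaulay and a generic linear form is a non-zerodivisor), this forces $h_{\RR[x]/I(\Gamma)}(j) = |\Gamma|$ for all $j \geq d-1$, which gives $\reg(I(\Gamma)) \leq d$.

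The main obstacle — or rather the main point requiring care — is the injectivity of the evaluation map $H_{3,d-1}(\RR) \to \RR^{|\Gamma|}$: this is exactly the statement that $|\Gamma|$ points in general linear position in $\PP^2$ impose independent conditions on forms of degree $d-1$ when $|\Gamma| = \binom{d+1}{2}$. This is a classical fact (it follows, for instance, from the fact that points in general linear position — often called points in general position or in "uniform position" — impose independent conditions on curves of any degree up to the maximal one dictated by dimension count; see the Castelnuovo-type arguments, or it can be proved directly by induction on $d$ using that a general line through none of the points lets one peel off a factor). I would cite this as a known result about points in general linear position in $\PP^2$, or prove it by the usual inductive peeling argument: pick a line $\ell$ avoiding $\Gamma$ — no wait, one wants a line meeting $\Gamma$ in as many points as possible; actually the clean inductive statement is that $\binom{d+1}{2}$ general points impose independent conditions on degree $d-1$ forms, which follows by adding points one at a time and checking that each new point in general linear position is not a base point of the system of degree $d-1$ forms through the previous ones. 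Once this linear-algebra fact is in hand, everything else is the standard Hilbert-function bookkeeping for points in $\PP^2$ described above, and the lemma follows.
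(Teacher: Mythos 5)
Your reduction of the lemma to the single claim $I_{d-1}(\Gamma)=0$ (equivalently, that the evaluation map $H_{3,d-1}\to\RR^{|\Gamma|}$ is injective), followed by the standard Cohen--Macaulay/Hilbert-function bookkeeping for points in $\PP^2$, is fine as far as the bookkeeping goes. The genuine gap is the key claim itself: it is \emph{false} that $\binom{d+1}{2}$ points in general linear position (i.e.\ no three collinear, which is all the paper's definition requires in $\RR\PP^2$) impose independent conditions on forms of degree $d-1$. Take all $\binom{d+1}{2}$ points on a smooth conic $Q$ for any $d\geq 3$: no three of them are collinear, yet $Q\in I_{2}(\Gamma)\subseteq I_{d-1}(\Gamma)$, so the evaluation map has a kernel; in fact for such a configuration one computes $h_{\RR[x]/I(\Gamma)}(d-1)<|\Gamma|$ and $\reg(I(\Gamma))>d$. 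So your argument (and indeed the conclusion you are trying to force through it) genuinely needs more than ``no three collinear'': it needs genericity, or at least a uniform-position/generic-position hypothesis. Your closing paragraph conflates these notions --- ``general linear position --- often called points in general position or in uniform position'' --- and this conflation is exactly where the proof breaks; the Castelnuovo-type and peeling arguments you allude to are statements about generic (or uniform-position) points, not about arbitrary no-three-collinear configurations. Moreover, even in the generic case you do not actually carry out the inductive argument; you visibly hesitate (``pick a line $\ell$ avoiding $\Gamma$ --- no wait...'') and leave the decisive step as a citation placeholder.

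For comparison, the paper does not attempt this computation at all: it quotes the (proved) minimal resolution conjecture for $\PP^2$ to get $\reg\in\{d,d+1\}$ and a result of Lorenzini to pin down the value $d$ for cardinality $\binom{d+1}{2}$ --- both of which are results about \emph{generic} point configurations. So the correct repair of your proof is to strengthen the hypothesis to generic points (which is how the lemma is actually used, together with the openness statement of Corollary \ref{cor:Zariski}), and then either prove maximal-rank behaviour in degree $d-1$ for one explicit configuration (an explicit $\binom{d+1}{2}$-point set with $I_{d-1}=0$, e.g.\ a suitable triangular grid) and conclude by semicontinuity, or cite the generic-points literature as the paper does. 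As written, with the literal ``general linear position'' hypothesis, the central step of your proposal fails.
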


\begin{proof}
Due to the minimal resolution conjecture for $\RR\PP^{2}$ (see, e.\,g., \cite{BallicoGeramita,Walter}), for $\Gamma\subset \RR\PP^{2}$ it holds that $\reg(I_d(\Gamma)) \in \{d, d+1\}$.
For $|\Gamma| = \binom{d+1}{2}$ it is shown in \cite[Section 3]{Lorenzini} that, indeed, $\reg(I_d(\Gamma)) = d$.
\end{proof}

We can now prove Theorem \ref{thm:ternary}.

\begin{proof}[Proof of Theorem \ref{thm:ternary}]
Let $\Gamma \subset \RR\PP^2$ be such that $|\Gamma| = \binom{d+1}{2} - k$ for
$0\leq k\leq \binom{d+1}{2} - 1$. We first prove that $\dim I^2_{2d}(\Gamma) = \dim I^{(2)}_{2d}(\Gamma)$. We show the claim by induction on $k$. First assume $k=0$, i.\,e., $|\Gamma|=\binom{d+1}{2}$. 
By Lemma \ref{lem5}, we have $\alpha(I_d(\Gamma)) = \reg(I_d(\Gamma)) = d$ and, from Lemmas \ref{lem3} and \ref{lem4}, we infer 
that $I^{(2)}_{2d}(\Gamma) = I^2_{2d}(\Gamma)$.

Now suppose that the claim holds for a fixed $k$ and hence
we have $\dim I^2_{2d}(\Gamma) = \dim I^{(2)}_{2d}(\Gamma) =  \binom{d+2}{2} + 3k$ for any $d$-independent set $\Gamma\subset \RR\PP^{2}$ with $|\Gamma|=\binom{d+1}{2} - k$.
For the induction step $k\mapsto k+1$, let $\Gamma\subset \RR\PP^{2}$ be $d$-independent with $|\Gamma| =  \binom{d+1}{2} - (k+1)$. Let $p\in\RR\PP^{2}$ such that
the set $T=\Gamma\cup\{p\}$ is $d$-independent and in general linear position. Note that $|T| =   \binom{d+1}{2} - k$. Since $\Gamma$ and $T$ are $d$-independent, we have
\begin{equation*}
\dim I_{d}(T) = d + 1 + k \mbox{ and }\dim I_{d}(\Gamma) = d + 2 + k.
\end{equation*}
Furthermore, we know that $\dim I^{(2)}_{2d}(T) = \binom{d+2}{2} + 3k$ by hypothesis. Now, let $Q_1\in I_{d}(T)$ with the additional property that 
one of its partial derivatives vanishes at $p$. Without loss of generality, assume $\frac{\partial Q_1}{\partial x_1}(p) = 0$. We can extend $Q_1$ to a basis 
$B = \{Q_1, Q_2, \dots,Q_{d+1+k}\}$ of $I_{d}(T)$. Note that there must be at least one basis element $Q_j$, $j\neq 1$, such that $\frac{\partial Q_j}{\partial x_1}(p) \neq 0$. Otherwise, we arrive at a contradiction in regard to 
$d$-independence of $T$. By assumption, there exist 
$\binom{d+2}{2} + 3k$ pairwise products of elements of $B$ forming a basis of $I^{2}_{2d}(T)$. Let $\widetilde B$ denote this basis of $I^{2}_{2d}(T)$. 
Furthermore, extend $B$ to a basis of $I_{d}(\Gamma)$ by adding a suitable form $Q \in H_{3,d}$. Observe that $Q(p) \neq 0$ since $\Gamma$ is $d$-independent. 
We claim that there exist two forms 
$Q_l, Q_m \in B$ such that the set 
$$L = \widetilde B \cup \{Q_lQ, Q_mQ,Q^2\}$$
forms a basis of $I^{2}_{2d}(\Gamma)$. 
Suppose that this set $L$ is linearly dependent for any choice of $Q_l, Q_m \in B$. Hence, we have
$$\sum_{Q_iQ_j\in \widetilde B}^{}\alpha_{ij}Q_iQ_j + \alpha_{l,m}^{(l)}Q_lQ + \alpha_{l,m}^{(m)}Q_mQ + \alpha Q^2 = 0$$
for a nontrivial set of coefficients $(\alpha_{ij},\alpha_{l,m}^{(l)}, \alpha_{l,m}^{(m)}, \alpha)$. To simplify notation we use $P_{l,m}$ to denote the above linear combination. Clearly, we have 
$P_{l,m}(p) = \alpha Q^2(p)$. Since $Q(p) \neq 0$, it follows that $\alpha = 0$. We remark that the forms in $\widetilde B$ vanish to order $2$ at $p$, whereas the forms
$Q_lQ$ and $Q_mQ$ vanish to order $1$ at $p$. Therefore, by taking partial derivatives, we get
$$0 = \frac{\partial P_{l,m}}{\partial x_i}(p) = \alpha_{l,m}^{(l)}\frac{\partial Q_lQ}{\partial x_i}(p) + \alpha_{l,m}^{(m)}\frac{\partial Q_mQ}{\partial x_i}(p)$$
for $1\leq i\leq 3$. 
Assume that there exists $i$ such that $\frac{\partial Q_lQ}{\partial x_i}(p) = 0$ and $\frac{\partial Q_mQ}{\partial x_i}(p) \neq 0$. This implies
$\alpha_{l,m}^{(m)} = 0$. On the other hand, since $Q_lQ$ only vanishes to order $1$ at $p$, there exists $j\neq i$ such that $\frac{\partial Q_lQ}{\partial x_j}(p) \neq 0$. 
This forces $\alpha_{l,m}^{(l)} = 0$. Since, we assumed that $L$ is linearly dependent for all pairs $(Q_l,Q_m) \in B$, the just conducted reasoning shows that the following relation holds:
$$\frac{\partial Q_lQ}{\partial x_i}(p) = 0 \Leftrightarrow \frac{\partial Q_mQ}{\partial x_i}(p) = 0 \quad \textrm{for all}\quad i, l, m$$
and, hence, 
\begin{equation}\label{cond}
\frac{\partial Q_l}{\partial x_i}(p) = 0 \Leftrightarrow \frac{\partial Q_m}{\partial x_i}(p) = 0 \quad \textrm{for all}\quad i, l, m.
\end{equation}
Recall that the basis $B$ was constructed in the way that 
$\frac{\partial Q_1}{\partial x_1}(p) = 0$. Furthermore, as already remarked, due to $d$-independence of $T$, there has to exist $j\neq 1$ such 
that $\frac{\partial Q_j}{\partial x_1}(p) \neq 0$. Setting $l = 1$, $m = j$ and $i = 1$, this yields a contradiction to \eqref{cond}. Hence, there
have to exist two forms $Q_l,Q_m \in B$ such that $L$ is a linearly independent set. We have $|L| = \binom{d+2}{2} + 3(k+1) = \dim I^{(2)}_{2d}(\Gamma)$.\\
It remains to consider the case $\binom{d+1}{2} + 1\leq |\Gamma| \leq \binom{d+1}{2} + (d-2)$. It is routine to check that in this case
\begin{equation*}
\dim I^{(2)}_{2d}(\Gamma) -  \dim I_{2d}^2(\Gamma) = \left(\binom{d+2}{2} - 3|\Gamma|\right) - \left(\binom{\binom{d+2}{2}-|\Gamma|+1}{2}\right) > 0.
\end{equation*}
\end{proof}

From Theorem \ref{thm:ternary} we can immediately infer Corollary \ref{cor:ternary}.
Observe that the maximal size of a $d$-independent set $\Gamma \subset \RR\PP^2$ is equal to $\binom{d+2}{2}-3 = \binom{d+1}{2}+(d-2)$. Hence, Theorem \ref{thm:ternary} covers all $d$-independent sets concerning the occurence of dimensional differences.



\section{The case $(n,2d)=(4,4)$}
\label{Sec:sixpoints}

We now fully describe the situation with respect to $\p(\Gamma)$ and $\sq(\Gamma)$ for sets $\Gamma$ in $\mathbb{R}^4$ with $|\Gamma|\leq 6$.
In the following, we will work with affine representatives in $\mathbb{R}^4$ rather than projective points in $\mathbb{RP}^3$. 
Let $\Gamma=\{s_1, \ldots, s_6\}$ be a set of six points in $\mathbb{R}^4$ in general linear position so that any $4$ of them span $\mathbb{R}^4$. We will show that $\Gamma$ is 2-independent. In particular, this implies that the conditions of vanishing at $s_i \in \Gamma$ are linearly independent and therefore $\dim I_{1,2}(\Gamma)=\binom{5}{2}-6=4$. It follows that the dimension of the vector space $I^{2}_4(\Gamma)$ spanned by squares from $I_d(\Gamma)$ is at most $\binom{5}{2}=10$. We will show that it is equal to $10$.

On the other hand, the Alexander-Hirschowitz Theorem tells us that, generically, the dimension of $I_{2,4}(\Gamma)$ is $\binom{7}{4}-6 \cdot 4=11$. It is not hard to show that for 6 points in $\mathbb{R}^4$ in general linear position this dimension count is actually correct. Therefore, we should have a gap of 1 dimension between $P_{4,4}(\Gamma)$ and $\Sigma_{4,4}(\Gamma)$. 

We now state the main result of this section, which is Theorem \ref{thm:diff4} from the Introduction.

\begin{theorem}\label{thm:Difference4}
Let $\Gamma\subset \RR\PP^3$ be a finite set in general linear position. Then the following hold:
\begin{itemize}
 \item[(i)] If $|\Gamma|=6$, then 
\begin{equation*}
             \dim I_4^2(\Gamma)=10<11=\dim I_4^{(2)}(\Gamma).
            \end{equation*}
 \item[(ii)] If $|\Gamma|\leq 5$, then
\begin{equation*}
             \dim I_4^2(\Gamma)=\dim I_4^{(2)}(\Gamma).
            \end{equation*}
\end{itemize}
\end{theorem}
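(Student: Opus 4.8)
The plan is to treat the two parts essentially separately, with part (ii) being the easier bookkeeping exercise and part (i) requiring an explicit dimension computation for six points in general linear position in $\RR\PP^3$. In both cases the strategy is to first argue that $\Gamma$ is $2$-independent, so that Propositions \ref{prop:sosFace} and \ref{prop:posFace} reduce everything to comparing $\dim I_4^2(\Gamma)$ with $\dim I_4^{(2)}(\Gamma)$, and then to compute each of these dimensions directly. For the symbolic power, the dimension is governed by the Alexander--Hirschowitz Theorem: imposing a double point at each $s_i$ costs $n = 4$ conditions, so generically $\dim I_4^{(2)}(\Gamma) = \binom{7}{4} - 4|\Gamma| = 35 - 4|\Gamma|$, provided these conditions are independent. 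For $|\Gamma| = 6$ this gives $11$, and for $|\Gamma| \le 5$ it gives $35 - 4|\Gamma| \ge 15$. The first task is therefore to verify that six (hence also fewer) general points in $\RR\PP^3$ impose independent double-point conditions on quartics and also impose independent single-point conditions on linear forms --- the latter being exactly condition \eqref{first cond} and the codimension count in \eqref{second cond} of $d$-independence for $d=1$; since $|\Gamma| = 6 \le \binom{n+d-1}{d} - n = \binom{4}{1} - 4$ fails(!), one must instead check $2$-independence by hand, using general linear position, rather than invoking Proposition \ref{prop:dIndependent}.

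For part (i), the heart of the matter is the equality $\dim I_4^2(\Gamma) = 10$. The upper bound is immediate: since the six points in general linear position impose independent conditions on linear forms (any $4$ of them spanning $\RR^4$ and no $4$ on a hyperplane through the origin), $\dim I_1(\Gamma) = \binom{5}{2} - 6 = 10 - 6 = 4$, and $I_4^2(\Gamma) = (I_1(\Gamma))^2$ by Lemma \ref{lem3} (the ideal $I(\Gamma)$ is generated in degree $1$ for points in general linear position in this range, so $\alpha(I) = 1$), hence $\dim I_4^2(\Gamma) \le \binom{4+1}{2} = 10$. The lower bound --- that the $10$ pairwise products of a basis $\ell_1, \ell_2, \ell_3, \ell_4$ of $I_1(\Gamma)$ are linearly independent --- is the key step: it amounts to showing that the degree-$2$ part of the coordinate ring of the linear space cut out by these four linear forms has dimension $\binom{4+1}{2} = 10$, i.e. that there is no unexpected quadratic relation among $\ell_1, \dots, \ell_4$. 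Because $\Gamma$ is in general linear position, the $\ell_i$ are themselves $4$ linearly independent linear forms in $4$ variables (their common zero locus in $\RR\PP^3$ is empty when $|\Gamma| = 6 > n$), so after a linear change of coordinates they become $x_1, \dots, x_4$ and their pairwise products are manifestly linearly independent; this is where ``general linear position'' does the work. Then $\dim I_4^2(\Gamma) = 10 < 11 = \dim I_4^{(2)}(\Gamma)$, provided we have confirmed the Alexander--Hirschowitz count $35 - 24 = 11$ is attained for six general points, which one does by exhibiting a single configuration (e.g.\ a coordinate-adapted choice) achieving it and invoking semicontinuity.

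For part (ii), with $|\Gamma| \le 5$, the plan is to show both sides equal $\dim I_4^2(\Gamma)$. Again $\dim I_1(\Gamma) = 10 - |\Gamma|$ by general linear position, and $I_4^2(\Gamma) = (I_1(\Gamma))^2$ has dimension $\binom{10 - |\Gamma| + 1}{2}$ provided the pairwise products of a basis are independent --- which holds because $10 - |\Gamma| \ge 5 > 4$ forces the linear forms to still span all of $H_{4,1}$, so $(I_1(\Gamma))^2$ is all of $H_{4,2}$ and has dimension $10$. Wait: more carefully, when $|\Gamma| \le 5$ one has $\dim I_1(\Gamma) \ge 5$, so $I_1(\Gamma) = H_{4,1}$ only if $|\Gamma| \le 5$... actually $\dim H_{4,1} = 4$, so $\dim I_1(\Gamma) \ge 5$ is impossible; the correct statement is that for $|\Gamma| \le 4$ in general linear position, $\dim I_1(\Gamma) = 4 - |\Gamma|$ and the analysis differs, while for $|\Gamma| = 5, 6$ one has $\dim I_1(\Gamma) = 10 - |\Gamma|$ only if $I(\Gamma)$ is not generated purely in degree $1$. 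The cleaner route for part (ii) is: compute $\dim I_4^{(2)}(\Gamma) = 35 - 4|\Gamma|$ (Alexander--Hirschowitz, independence checked via general linear position) and then exhibit enough explicit squares and products in $I_4^2(\Gamma)$ to match this count, or invoke the known structure of the Hilbert function of $\le 5$ general points (a complete intersection / arithmetically Cohen--Macaulay situation) to conclude $I_4^2(\Gamma) = I_4^{(2)}(\Gamma)$ by a regularity argument paralleling Lemmas \ref{lem1}--\ref{lem4}. The main obstacle I anticipate is precisely this last bookkeeping: getting the degree-$1$ vanishing ideal dimensions and the generation degree of $I(\Gamma)$ correct across the small cases $|\Gamma| \in \{1,2,3,4,5\}$, and verifying in each that no unexpected quadratic syzygy among the generators shrinks $I_4^2(\Gamma)$ below the symbolic bound --- a case analysis that is routine but must be done carefully, since general linear position (not the stronger genericity hypothesis needed for Proposition \ref{prop:dIndependent}) is all that is assumed.
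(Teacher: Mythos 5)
There is a genuine error at the heart of your part (i). You claim $\dim I_1(\Gamma)=\binom{5}{2}-6=4$, that $I(\Gamma)$ is generated in degree $1$ (so $\alpha(I)=1$), and hence that $I_4^2(\Gamma)=(I_1(\Gamma))^2$ is spanned by pairwise products of four independent \emph{linear} forms, whose independence is ``manifest'' after a change of coordinates. This confuses degrees: $\dim H_{4,1}=4$, and six points in general linear position span $\RR\PP^3$, so \emph{no} nonzero linear form vanishes on all of $\Gamma$ and $I_1(\Gamma)=0$. The correct count is $\dim I_2(\Gamma)=\binom{5}{2}-6=4$ (quadrics through the six points), $\alpha(I(\Gamma))=2$, and by Lemma \ref{lem3} $I_4^2(\Gamma)=(I_2(\Gamma))^2$. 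The crucial lower bound $\dim I_4^2(\Gamma)\geq 10$ is therefore the statement that the ten pairwise products $Q_iQ_j$ of a basis $Q_1,\dots,Q_4$ of $I_2(\Gamma)$ are linearly independent, and this is \emph{not} automatic for four special quadrics vanishing on $\Gamma$: it is exactly the nontrivial step of the paper, which is handled by choosing a basis of quadrics that factor into products of linear forms coming from a combinatorial double covering of the six points by hyperplane triples, passing to the dual bases $u_i^*$, $v_i^*$, and using the nonvanishing of the inner products $\ip{u_i}{v_j^*}$ (Lemma \ref{inprod}) together with a minor/cross-product argument (Lemma \ref{Lem:pairwise4}). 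Your degree confusion erases precisely this difficulty, so the lower bound is unproved in your proposal; the upper bound and the Alexander--Hirschowitz count $\dim I_4^{(2)}(\Gamma)=35-24=11$ (which for general linear position, not just generic points, still needs a verification you only gesture at) are the easy parts.

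Part (ii) as written is not a proof. You notice mid-argument that your dimension bookkeeping is inconsistent (again because $\dim I_1(\Gamma)$ cannot exceed $4$, and in fact vanishes already for $|\Gamma|\geq 4$ spanning points), and you then retreat to an unspecified alternative (``exhibit enough explicit squares and products \dots or invoke a regularity argument''). The paper's proof is concrete: for $|\Gamma|=5$ it extends $Q_1,\dots,Q_4$ by the explicit quadric $Q_5=\ip{x}{u_1}\ip{x}{u_2}$, shows all $15$ pairwise products are independent by evaluating (and differentiating) at the dual points $u_k^*$, matching $\dim I_4^{(2)}(\Gamma)=35-20=15$, and for $|\Gamma|\leq 4$ it exhibits explicit sub-bases of the (overcounted) set of pairwise products of $Q_1,\dots,Q_{10-|\Gamma|}$ matching $35-4|\Gamma|$. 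A regularity argument in the spirit of Lemmas \ref{lem1}--\ref{lem4} cannot be invoked verbatim here, since those lemmas require $\alpha(I)=\reg(I)$, which you have not established for these configurations; so as it stands part (ii) is a plan with a gap, not an argument.
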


The proof of the above theorem will require several preparatory results. 
We start with a special construction for (i). 

To every $3$ element subset $T=\{t_1,t_2,t_3\}$ of $\{1, \ldots, 6\}$, we can associate the hyperplane $L_T$ spanned by the vectors $s_{t_1}$, $s_{t_2}$ and $s_{t_3}$.

We want to construct a double covering of $s_1, \ldots, s_6$ by 4 hyperplanes of the form $L_T$ with some nice combinatorial properties. We select 4 triples $T_i$ such that any two of them intersect in exactly one element of $\{1,\ldots,6\}$ and each element is contained in precisely two triples. Here is an example of such a covering, which is not unique: $$T_1=\{1,2,3\}, \hspace{4mm}T_2=\{1,4,5\},\hspace{4mm} T_3=\{2,4,6\}, \hspace{4mm}T_4=\{3,5,6\}.$$ To every such covering, we can associate the complementary covering, where we replace the triple $T_i$ with its complement $\overline{T}_i$. So, in the given example, $\overline{T}_1=\{4,5,6\}$, $\overline{T}_2=\{2,3,6\}$, $\overline{T}_3=\{1,3,5\}$ and $\overline{T}_4=\{1,2,4\}$. We observe that the complementary covering also shares the property that any two triples intersect in exactly one point and that every point is contained in exactly two triples.

To each triple $T$, we associate the linear functional with kernel $L_T$. We can think of this functional as the inner product with the unit normal vector to $L_T$, which is unique up to a sign. The choice of sign will not make a difference to us. We let $u_i$ and $v_i$ be a unit normal vector to $L_{T_i}$ and $L_{\overline{T}_i}$, respectively.

The vectors $u_i$ and $v_i$ form a pair of bases of $\mathbb{R}^4$. The key is to work with the dual configurations. We define $u_i^*$ to be vectors such that
 \begin{displaymath}
   \ip{u_i^*}{u_j} = \left\{
     \begin{array}{lr}
       1 &  \text{if} \hspace{2mm}i=j\\
       0  & \text{if} \hspace{3mm}i \neq j.
     \end{array}
   \right.
\end{displaymath}
One way to think about $u_i^*$ is that if we form the matrix $U$ with rows $u_i$, then $u_i^*$ form the columns of $U^{-1}$. We define vectors $v_i^*$ in the same way for $v_i$.

We will show that the four forms
\begin{eqnarray*}
Q_1(x) &=&\ip{x}{u_1}\ip{x}{v_1}, \hspace{4mm} Q_2(x)=\ip{x}{u_2}\ip{x}{v_2},\\
Q_3(x)&=&\ip{x}{u_3}\ip{x}{v_3}, \hspace{4mm} Q_4(x)=\ip{x}{u_4}\ip{x}{v_4}
\end{eqnarray*}
form a basis of $I_{2}(\Gamma)$. This factoring basis will allow us to prove 2-independence of $\Gamma$, and pairwise products $Q_iQ_j$ with $1\leq i \leq j\leq 4$ will form a basis of $I^{2}_4(\Gamma)$.

%

%

The vectors $u_i$ and $v_i$ are not just two arbitrary sets of bases of $\mathbb{R}^4$. Since they come from a configuration of 6 points in general linear position, they carry some structure. The following simple lemma will be crucial to our proofs.

\begin{lemma}\label{inprod}
For all $1\leq i,j\leq 4$ the following hold:
\begin{equation*}
\ip{u_i}{v_j^*}\neq 0 \hspace{2mm} \text{and} \hspace{2mm} \ip{v_i}{u_j^*}\neq 0.
\end{equation*}
\end{lemma}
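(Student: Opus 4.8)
The statement to prove is Lemma~\ref{inprod}: for all $1\le i,j\le 4$, $\ip{u_i}{v_j^*}\ne 0$ and $\ip{v_i}{u_j^*}\ne 0$. The plan is to unwind the definitions of the dual bases and reduce each non-vanishing assertion to a combinatorial fact about the incidence pattern of the triples $T_i$ and $\overline T_i$, which in turn follows from the general linear position of $\Gamma=\{s_1,\dots,s_6\}$.

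\textbf{Step 1: translate into a vanishing/nonvanishing statement about linear functionals.} Recall $u_j$ is a normal to the hyperplane $L_{T_j}=\mathrm{span}(s_{t}:t\in T_j)$, so $\ip{s_t}{u_j}=0$ exactly when $t\in T_j$. Similarly $\ip{s_t}{v_j}=0$ exactly when $t\in\overline T_j$. Since each element of $\{1,\dots,6\}$ lies in exactly two of the $T_i$, and $v_j^*$ is by definition the $j$-th column of $V^{-1}$ where $V$ has rows $v_1,\dots,v_4$, the vector $v_j^*$ is (up to a nonzero scalar) the unique vector orthogonal to $v_i$ for all $i\ne j$. Now here is the key: for each index $j$, the point $s_t$ with $t\notin\overline T_j$ but lying in a fixed pattern of the other $\overline T_i$ plays the role of $v_j^*$ up to scalar — more precisely, I would identify, for each $j$, a specific $s_t$ (or a simple combination) that is orthogonal to $v_i$ for all $i\ne j$, so that $v_j^*$ is proportional to it, and then check $\ip{u_i}{v_j^*}\ne 0$ amounts to $t\notin T_i$, i.e. a disjointness/incidence condition.

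\textbf{Step 2: do the incidence bookkeeping.} With the explicit covering in the excerpt, $T_1=\{1,2,3\}$, $T_2=\{1,4,5\}$, $T_3=\{2,4,6\}$, $T_4=\{3,5,6\}$ and complements $\overline T_1=\{4,5,6\}$, $\overline T_2=\{2,3,6\}$, $\overline T_3=\{1,3,5\}$, $\overline T_4=\{1,2,4\}$, each element lies in exactly two $\overline T_i$'s, so for each pair $\{i,i'\}$ the intersection $\overline T_i\cap\overline T_{i'}$ is a single element; hence for fixed $j$ the three hyperplanes $L_{\overline T_i}$ ($i\ne j$) meet in the line spanned by a single point $s_{t(j)}$, where $t(j)$ is the common element of those three complementary triples that is not forced to lie in $\overline T_j$. (One checks $t(j)$ exists and is unique since $\{1,\dots,6\}$ minus the three elements $\overline T_i\cap\overline T_{i'}$ over $i,i'\ne j$ has a single surviving index.) Thus $v_j^*$ is proportional to $s_{t(j)}$. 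Then $\ip{u_i}{v_j^*}\ne 0$ is equivalent to $\ip{s_{t(j)}}{u_i}\ne 0$, i.e. to $t(j)\notin T_i$, which one verifies holds for every $i,j$ by the explicit incidence table; by symmetry of the construction under swapping the covering with its complement, the same argument gives $\ip{v_i}{u_j^*}\ne 0$. Finally, since $d$-independence / general linear position of $\Gamma$ is invariant under the relabelings permuted by the symmetry group of the two coverings, the argument does not depend on the particular covering chosen, so the lemma holds for every configuration $\Gamma$ in general linear position.

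\textbf{Main obstacle.} The only real content is Step~1: correctly identifying $v_j^*$ (resp. $u_j^*$) with a concrete point $s_{t(j)}$ of the configuration up to a nonzero scalar, and being sure the relevant scalar is nonzero — this is where general linear position of $\Gamma$ must be invoked (it guarantees that three of the hyperplanes $L_{\overline T_i}$ genuinely intersect in a line rather than a plane, equivalently that the $v_i$ are a basis, and that $s_{t(j)}$ does not accidentally lie on $L_{T_i}$). Once the dictionary ``dual basis vector $\leftrightarrow$ configuration point'' is set up, the rest is a finite incidence check on the combinatorial design, which is routine.
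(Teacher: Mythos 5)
Your argument breaks down at its central step: the identification of $v_j^*$ (up to scalar) with a configuration point $s_{t(j)}$. It is true that $v_j^*$ spans the line $\bigcap_{i\neq j} L_{\overline{T}_i}$, but no point of $\Gamma$ lies on that line. A point $s_t$ lies on $L_{\overline{T}_i}$ (for points in general linear position) exactly when $t\in \overline{T}_i$, and since every index of $\{1,\dots,6\}$ lies in exactly \emph{two} of the triples $\overline{T}_1,\dots,\overline{T}_4$, the triple intersection $\bigcap_{i\neq j}\overline{T}_i$ is empty; the fact that the pairwise intersections are singletons does not help, since those three singletons are three \emph{distinct} indices. So the element $t(j)$ you posit does not exist (for $j=1$ the pairwise intersections of $\overline{T}_2,\overline{T}_3,\overline{T}_4$ are $\{3\},\{2\},\{1\}$, leaving three, not one, "surviving" indices, and none common to all three triples). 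Concretely, in the paper's Example with $s_1=(0,0,1,1),\dots,s_6=(1,1,0,0)$ one computes $v_1^*\propto(1,-1,-1,-1)$, which is proportional to no $s_t$. Hence the promised reduction of $\ip{u_i}{v_j^*}\neq 0$ to a finite incidence check never gets off the ground, and the "routine" part of your proof has no content to check.

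The actual proof has to use the linear algebra of the configuration, not just the combinatorial design. The paper argues by contradiction: if, say, $\ip{u_1^*}{v_1}=0$, then $v_1\in\operatorname{span}(u_2,u_3,u_4)$, say $v_1=\alpha_2u_2+\alpha_3u_3+\alpha_4u_4$; pairing with $s_4$ kills all terms except $\alpha_4\ip{s_4}{u_4}$, and general linear position gives $\ip{s_4}{u_4}\neq 0$, so $\alpha_4=0$; pairing with $s_5$ and $s_6$ similarly gives $\alpha_2=\alpha_3=0$, contradicting $v_1\neq 0$. The case $\ip{u_1^*}{v_2}=0$ is handled the same way with a different choice of test points, and symmetry of the construction reduces the lemma to these two cases. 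If you want to salvage your approach, this contradiction argument (or an explicit determinant computation showing $\det(\ip{u_i}{v_j})$-type minors are nonzero) is the missing ingredient; the incidence pattern alone only tells you which pairings $\ip{s_t}{u_i}$ vanish, not that the dual vectors avoid the hyperplanes $u_i^\perp$, $v_i^\perp$.
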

\begin{proof}
By symmetry, it suffices to prove only one of the two assertions. Again, by symmetry, it will be enough to show that $\ip{u_1^*}{v_1}\neq 0$ and $\ip{u_1^*}{v_2}\neq 0$.

 Suppose that $\ip{u_1^*}{v_1}=0$. Then, it follows that $v_1$ is in the span of $u_2,u_3,u_4$. Let $$v_1=\alpha_2u_2+\alpha_3u_3+\alpha_4u_4.$$ Now   consider the inner product $\ip{v_1}{s_4}$. Recall that $v_1$ came from the triple $\{4,5,6\}$, $u_2$ from $\{1,4,5\}$, $u_3$ from $\{2,4,6\}$ and $u_4$ from $\{3,5,6\}$. It follows that $$\ip{v_1}{s_4}=0=\alpha_4\ip{s_4}{u_4}.$$

Being the points $s_i$ in general linear position implies that $\ip{s_4}{u_4}\neq 0$ and, therefore, $\alpha_4=0$. By considering inner products of $v_1$ with $s_5$ and $s_6$, we can also show that $\alpha_2=\alpha_3=0$, which yields a contradiction.

Similarly, if $\ip{u_1^*}{v_2}=0$, then $v_2$ is in the span of $u_2,u_3,u_4$. Let $$v_2=\alpha_2u_2+\alpha_3u_3+\alpha_4u_4.$$ Recall that $v_2$ came from the triple $\{2,3,6\}$, $u_2$ from $\{1,4,5\}$, $u_3$ from $\{2,4,6\}$ and $u_4$ from $\{3,5,6\}$. By an analogous argument as before, we can establish that $\alpha_2=0$ by taking inner products with $s_6$. Then, we use the inner product with $s_2$ to show that $\alpha_4=0$ and we will arrive at a contradiction.
\end{proof}

\begin{lemma}
\label{Lem:pairwise4}
The forms $Q_i$, $1\leq i\leq 4$ form a basis of $I_{2}(\Gamma)$. Furthermore, the pairwise products $Q_iQ_j$ with $1\leq i \leq j\leq 4$ form a basis of $I^{2}_4(\Gamma)$ and  $\dim I^{2}_4(\Gamma)=10$.
\end{lemma}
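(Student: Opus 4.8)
The plan is to prove the lemma in two stages: first that $Q_1,\dots,Q_4$ span (hence, after a dimension count, form a basis of) $I_2(\Gamma)$, and then that their $10$ pairwise products $Q_iQ_j$ ($1\le i\le j\le 4$) are linearly independent, which combined with the a priori bound $\dim I_4^2(\Gamma)\le\binom52=10$ finishes everything.

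\textbf{Step 1: the $Q_i$ form a basis of $I_2(\Gamma)$.} Each $Q_i=\ip{x}{u_i}\ip{x}{v_i}$ vanishes on $\Gamma$: the point $s_k$ lies on $L_{T_i}$ whenever $k\in T_i$ and on $L_{\overline T_i}$ whenever $k\in\overline T_i$, and since $T_i\cup\overline T_i=\{1,\dots,6\}$ every $s_k$ is a zero of one of the two factors. So $\langle Q_1,\dots,Q_4\rangle\subseteq I_2(\Gamma)$, and since the conditions of vanishing at the six points impose $6$ independent linear conditions on $H_{4,2}$ (dimension $\binom52=10$; I would verify independence directly from general linear position, as the excerpt already anticipates), $\dim I_2(\Gamma)=4$. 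Thus it suffices to show $Q_1,\dots,Q_4$ are linearly independent. Suppose $\sum_i\lambda_iQ_i=0$. The cleanest way is to evaluate (or take suitable second-order data) at the dual vectors: plug in $x=u_j^*$. Then $\ip{u_j^*}{u_i}=\delta_{ij}$, so $Q_i(u_j^*)=\delta_{ij}\ip{u_j^*}{v_i}$, and Lemma \ref{inprod} gives $\ip{u_j^*}{v_j}\ne0$; hence $0=\sum_i\lambda_iQ_i(u_j^*)=\lambda_j\ip{u_j^*}{v_j}$ forces $\lambda_j=0$ for every $j$. This gives linear independence and hence the basis claim.

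\textbf{Step 2: the pairwise products are independent.} Here the matching structure of the two coverings together with Lemma \ref{inprod} is exactly what is needed, and this is the step I expect to be the main obstacle. Suppose $\sum_{i\le j}c_{ij}Q_iQ_j=0$, a quartic identity. The idea is again to probe with the dual vectors, now of \emph{mixed} type. Evaluating at $x=u_k^*$ kills $Q_i$ for $i\ne k$, leaving $\bigl(\sum_{i\le j}c_{ij}Q_iQ_j\bigr)(u_k^*)=c_{kk}Q_k(u_k^*)^2=c_{kk}\ip{u_k^*}{v_k}^2$; since $\ip{u_k^*}{v_k}\ne0$ by Lemma \ref{inprod}, we get $c_{kk}=0$ for all $k$. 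For the off-diagonal coefficients $c_{ij}$ ($i<j$) I would use a two-parameter probe along the line $x=u_i^*+t\,u_j^*$ (or take a mixed second derivative, i.e. apply $\partial_{u_i^*}\partial_{u_j^*}$ to the identity and evaluate at $0$): a product $Q_aQ_b$ contributes to the $u_i^*\otimes u_j^*$-component of its Hessian only if $\{a,b\}\subseteq\{i,j\}$ — but the $\{i,i\}$ and $\{j,j\}$ terms have already been shown to vanish — so only $Q_iQ_j$ survives, contributing a nonzero multiple of $\ip{u_i^*}{v_j}\ip{u_j^*}{v_i}$, which is nonzero by Lemma \ref{inprod}. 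Hence $c_{ij}=0$. Some care is needed to make sure the cross terms from $Q_aQ_b$ with, say, $a=i$, $b\notin\{i,j\}$ really do not contaminate the relevant component; this is precisely where one uses that $Q_b(u_i^*)=0$ and $Q_b(u_j^*)=0$ for $b\ne i,j$, so those mixed contributions are identically zero along the test line. This yields linear independence of all $10$ products.

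\textbf{Conclusion.} The $10$ products $Q_iQ_j$ lie in $I_4^2(\Gamma)$ and are linearly independent, so $\dim I_4^2(\Gamma)\ge10$; combined with the bound $\dim I_4^2(\Gamma)\le\binom52=10$ coming from $\dim I_2(\Gamma)=4$ (Proposition \ref{prop:sosFace} identifies $\dim\Sigma_{4,4}(\Gamma)$ with this), we conclude $\dim I_4^2(\Gamma)=10$ and that the $Q_iQ_j$ form a basis. The main difficulty is purely bookkeeping in Step 2: organizing the dual-vector evaluations and mixed derivatives so that each coefficient is isolated, with Lemma \ref{inprod} supplying the nonvanishing at every stage.
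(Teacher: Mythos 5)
Your overall strategy is sound and follows the same skeleton as the paper: vanishing of the $Q_i$ on $\Gamma$ from the double covering, linear independence via evaluation at the dual vectors $u_j^*$ with Lemma \ref{inprod}, evaluation at $u_k^*$ to kill the coefficients of the squares, and then first-order information at the $u_k^*$ to kill the mixed coefficients. Where you diverge is the last step: the paper differentiates the relation in all four coordinate directions at $u_1^*$, obtains a $4\times 3$ linear system in $\alpha_{12},\alpha_{13},\alpha_{14}$ whose coefficient columns are nonzero multiples of $u_2,u_3,u_4$, and rules out a nontrivial solution because vanishing of all $3\times 3$ minors (via the cross product in $\mathbb R^4$) would force $u_2,u_3,u_4$ to be dependent. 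You instead probe with the dual vectors themselves, exploiting that every $Q_a$ with $a\notin\{i,j\}$ vanishes identically on the span of $u_i^*$ and $u_j^*$; this isolates each $c_{ij}$ separately and, once executed correctly, avoids the minor/cross-product argument entirely, which is a genuinely cleaner finish. (Do also record, as the paper implicitly does via Lemma \ref{lem3}, that $(I^2)_4=(I_2)^2$ because $I(\Gamma)$ contains no linear forms; this is what makes the ten products a spanning set and gives the upper bound $10$.)

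However, the decisive nonvanishing claim in your Step 2 is not correct as stated. The parenthetical ``apply $\partial_{u_i^*}\partial_{u_j^*}$ to the identity and evaluate at $0$'' is vacuous: the second derivative of a quartic evaluated at the origin is zero. If instead you mean the symmetric mixed probe, i.e.\ the coefficient of $s^2t^2$ in the restriction to $x=su_i^*+tu_j^*$ (equivalently the full polarization at $(u_i^*,u_i^*,u_j^*,u_j^*)$), then the surviving contribution of $Q_iQ_j$ is
$\ip{u_i^*}{v_i}\ip{u_j^*}{v_j}+\ip{u_i^*}{v_j}\ip{u_j^*}{v_i}$, a sum of two nonzero terms which Lemma \ref{inprod} alone does not prevent from cancelling — so your claim that only ``a nonzero multiple of $\ip{u_i^*}{v_j}\ip{u_j^*}{v_i}$'' survives is unjustified. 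The fix is immediate: use a first-order probe. Differentiate the relation in the single direction $u_j^*$ and evaluate at $u_i^*$ (the coefficient of $t$ along $x=u_i^*+tu_j^*$). Since $\ip{u_i^*}{u_a}=\ip{u_j^*}{u_a}=0$ for $a\notin\{i,j\}$, every product except $Q_i^2$, $Q_iQ_j$, $Q_j^2$ vanishes identically on that plane; $Q_j^2$ contributes only at order $t^2$, and $c_{ii}=c_{jj}=0$ already, so the only surviving term is $c_{ij}\ip{u_i^*}{v_i}\ip{u_i^*}{v_j}$, whose two factors are nonzero by Lemma \ref{inprod}, giving $c_{ij}=0$. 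With this correction your argument is complete and, in this step, somewhat more economical than the paper's.
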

\begin{proof}
It is not hard to show that $I_{2}(\Gamma)$ has dimension 4. To show this claim, it suffices to prove that the polynomials $Q_i$ are linearly independent. Consider the values of $Q_i$ at the points $u_i^*$.

From the definition of the dual points $u_i^*$ and Lemma \ref{inprod}, it follows that $Q_i(u_i^*)=\ip{u_i^*}{v_i} \neq 0$ and $Q_i(u_j^*)= 0$ if $i \neq j$. Therefore, if $P=\alpha_1Q_1+\alpha_2Q_2+\alpha_3Q_3+\alpha_4Q_4=0$ for $\alpha_i \in \mathbb R$, then, by considering $P(u_i^*)$, we can see that $\alpha_i = 0$ for each $i$ and, therefore, the $Q_i$ are linearly independent.

Now consider pairwise products $Q_iQ_j$ for $1\leq i \leq j\leq 4$. These forms clearly belong to $I^{2}_4(\Gamma)$, and we need to show their linear independence. Of all the pairwise products only $Q^2_i$ does not vanish at $u^*_i$. Therefore, the squares $Q_i^2$ are linearly independent from all other pairwise products and it remains to show linear independence of $Q_iQ_j$ for $1\leq i < j\leq 4$.

By Lemma \ref{inprod}, only the products $Q_iQ_j$ vanish at $u^*_i$ to order 1. If both indices are distinct from $i$, then the product vanishes to order 2. Suppose that these products are linearly dependent, i.\,e., there exists a linear combination such that 
$$\sum_{1\leq i<j\leq 4}^{}\alpha_{ij}Q_iQ_j = 0,$$ where not all $\alpha_{ij}$ are zero. Differentiating and subsequently evaluating this equation at $u_1^*$, we obtain

$$\sum_{1 < j\leq 4}^{}\alpha_{1j}\frac{\partial Q_1Q_j}{\partial x_k}(u_1^*) = 0,\quad 1\leq k\leq 4,$$ which is equivalent to 
$$\sum_{1 < j\leq 4}^{}\alpha_{1j}\frac{\partial Q_j}{\partial x_k}(u_1^*) = 0,\quad 1\leq k\leq 4.$$ This is a $4\times 3$ system of linear equations in the variables $\alpha_{12}, \alpha_{13}, \alpha_{14}$. Denote the corresponding coefficient matrix by $A$. Assume that this system has a nontrivial solution, meaning that all $3\times 3$ minors of $A$ must vanish.
Considering the cross product of the three vectors $u_2, u_3, u_4 \in \mathbb R^4$ (see, e.\,g., \cite{Massey}), we can see that the entries of the cross product are exactly equal to an alternating $3\times 3$ minor of $A$. Hence, we conclude that the cross product is the zero vector, implying that the three vectors $u_2, u_3, u_4$ are linearly dependent, which is a contradiction since $u_1, u_2, u_3, u_4$ form a basis of $\mathbb R^4$. Hence, we conclude that $\alpha_{12}=\alpha_{13}=\alpha_{14} = 0$. Analogously, by following this procedure with $u_2^*, u_3^*$ and $u_4^*$, we can infer that $\alpha_{23}=\alpha_{24}=\alpha_{34} = 0$ and, hence, all pairwise products are linearly independent.
Since pairwise products $Q_iQ_j$ span $I^{2}_4(\Gamma)$ and are linearly independent, it finally follows that $\dim I^{2}_4(\Gamma)=10$.
\end{proof}

We are now ready to show $2$-independence of $\Gamma$.

\begin{prop}\label{prop:2-Independence}
Let $\Gamma$ be a set of $6$ points in $\mathbb{R}^4$ in general linear position. Then $\Gamma$ is 2-independent.
\end{prop}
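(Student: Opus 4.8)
The plan is to verify the two conditions of Definition~\ref{def:dIndependence} for the set $\Gamma=\{s_1,\dots,s_6\}\subset\mathbb R^4$ in general linear position, using the factoring basis $Q_1,\dots,Q_4$ of $I_2(\Gamma)$ provided by Lemma~\ref{Lem:pairwise4} together with the nonvanishing of inner products established in Lemma~\ref{inprod}. The first condition requires that the forms in $I_2(\Gamma)$ have no common zero in $\mathbb{CP}^3$ outside $\Gamma$; the second condition requires that for each $s_i$, the forms in $I_2(\Gamma)$ that are singular at $s_i$ form a subspace of codimension $n-1=3$ in $I_2(\Gamma)$, i.e., a one-dimensional subspace (since $\dim I_2(\Gamma)=4$).

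First I would check the first condition. Suppose $w=[w_1:\cdots:w_4]\in\mathbb{CP}^3$ lies on $Z(Q_i)$ for all $i$, so for each $i$ either $\ip{w}{u_i}=0$ or $\ip{w}{v_i}=0$. Since $u_1,\dots,u_4$ is a basis of $\mathbb R^4$ (hence of $\mathbb C^4$), $w$ cannot be orthogonal to all four $u_i$; similarly it cannot be orthogonal to all four $v_i$. So there is a nonempty proper subset $A\subsetneq\{1,2,3,4\}$ with $\ip{w}{u_i}=0$ for $i\in A$ and $\ip{w}{v_j}=0$ for $j\notin A$. If $|A|\geq 3$, say $\{1,2,3\}\subseteq A$, then $w$ is orthogonal to $u_1,u_2,u_3$ and to some $v_j$, $j\in\{1,\dots,4\}$; since $u_1,u_2,u_3$ span a hyperplane, $w$ is a scalar multiple of $u_1^*$ (or rather of the vector dual to the complementary direction), and then $\ip{w}{v_j}=0$ contradicts Lemma~\ref{inprod} (for the appropriate $j$). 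The cases $|A|=1,2$ require a bit more care, using the geometric meaning of $u_i,v_i$ as normals to the hyperplanes $L_{T_i},L_{\overline T_i}$: a common zero of $\ip{x}{u_i}$ and $\ip{x}{v_j}$ for $i\neq j$ would force $w$ onto an intersection $L_{T_i}\cap L_{\overline T_j}$, and tracking which of the $s_k$ lie on these hyperplanes pins down $w$ to be one of the $s_k$, using general linear position exactly as in the proof of Lemma~\ref{inprod}. The conclusion is $Z(Q_1)\cap\cdots\cap Z(Q_4)=\Gamma$ in $\mathbb{CP}^3$.

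Next I would check the second condition. Fix $s=s_m$ and consider a form $P=\alpha_1Q_1+\alpha_2Q_2+\alpha_3Q_3+\alpha_4Q_4\in I_2(\Gamma)$ that is singular at $s$, i.e., $\nabla P(s)=0$. Each $s_k$ lies on exactly two of the hyperplanes $L_{T_i}$ and, by the combinatorics of the complementary covering, on exactly two of the $L_{\overline T_i}$; moreover the index pairs are arranged so that for each fixed $k$, the set $\{i : s_k\in L_{T_i}\}$ and $\{i: s_k\in L_{\overline T_i}\}$ together cover $\{1,2,3,4\}$ with each index appearing at least once (this is precisely the design property of the covering). Hence for $s=s_m$ the value $Q_i(s)=\ip{s}{u_i}\ip{s}{v_i}$ is zero for every $i$ (as it must be, $s\in\Gamma$), and $\nabla Q_i(s)=\ip{s}{v_i}\,u_i+\ip{s}{u_i}\,v_i$; for those $i$ with $s\in L_{T_i}$ this is $\ip{s}{v_i}u_i$ (a nonzero multiple of $u_i$, by general linear position $\ip{s}{v_i}\neq 0$ unless also $s\in L_{\overline T_i}$), and symmetrically otherwise. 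The equation $\sum_i\alpha_i\nabla Q_i(s)=0$ then becomes a linear relation among a collection of the basis vectors $u_i,v_i$ with explicitly nonzero coefficients $\alpha_i\cdot(\text{nonzero scalar})$; I would argue, using that $u_1,\dots,u_4$ is a basis and the specific incidence pattern at $s_m$, that the solution space for $(\alpha_1,\dots,\alpha_4)$ is exactly one-dimensional. Equivalently, this says the $4\times 4$ matrix whose $i$-th row is $\nabla Q_i(s_m)$ has rank $3$, with kernel spanned by a single vector of $\alpha$'s — and a natural candidate form in this kernel is one vanishing to order $2$ at $s_m$, e.g.\ a product of two of the linear functionals both of whose kernels contain $s_m$.

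The main obstacle I anticipate is the bookkeeping in the second condition: one must track, for each point $s_m$, exactly which of the eight hyperplanes $L_{T_1},\dots,L_{T_4},L_{\overline T_1},\dots,L_{\overline T_4}$ pass through $s_m$, and then show that the corresponding gradient vectors $\nabla Q_i(s_m)$ span a $3$-dimensional space — no less (which would contradict $d$-independence and the dimension count $\dim I_{2,4}(\Gamma)=11$) and no more (which is automatic since the gradients lie in the $3$-space $s_m^\perp$ by Euler's identity, as the $Q_i$ vanish at $s_m$). The cleanest route is to exploit the symmetry of the configuration under the relevant subgroup of $S_6$ permuting the covering, reducing to a single representative point $s_m$, and then to invoke Lemma~\ref{inprod} to guarantee the relevant scalar coefficients are nonzero. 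Once rank $3$ is established for every $s_m$, Definition~\ref{def:dIndependence}\eqref{second cond} holds because the space of forms in $H_{3,2}$ — here $H_{4,2}$ — that are singular at $s_m$ and vanish on $\Gamma\setminus\{s_m\}$ intersects $I_2(\Gamma)$ in a $1$-dimensional space, giving codimension $|\Gamma|+n-1 = 6+3 = 9$ in $H_{4,2}$ (recall $\dim H_{4,2}=10$), as required.
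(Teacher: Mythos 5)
Your overall plan is the same as the paper's: use the factoring basis $Q_i=\ip{x}{u_i}\ip{x}{v_i}$, Lemma \ref{inprod} for the nonvanishing at the dual vectors, a case analysis on which linear factors vanish to show the common zero locus is exactly $\Gamma$ (the paper handles a complex zero by splitting into real and imaginary parts, you work in $\mathbb{CP}^3$ directly -- a cosmetic difference), and then a gradient computation at each $s_m$ to show the singular-at-$s_m$ subspace of $I_2(\Gamma)$ is one-dimensional. That matches the paper's proof of Proposition \ref{prop:2-Independence} step for step.

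There is, however, one genuine soft spot in your treatment of the second condition. The only concrete justification you offer for ``rank no less than $3$'' is that a smaller rank ``would contradict $d$-independence and the dimension count $\dim I_{2,4}(\Gamma)=11$'' -- but $2$-independence of $\Gamma$ is exactly what is being proved, so this is circular (and the quartic count $\dim I^{(2)}_{4}(\Gamma)=11$ is itself established in the paper only after, or independently of, this proposition; it does not translate directly into a statement about quadrics). Your fallback, ``invoke Lemma \ref{inprod} to guarantee the relevant scalar coefficients are nonzero,'' is also not enough: Lemma \ref{inprod} concerns $\ip{u_i}{v_j^*}$ and $\ip{v_i}{u_j^*}$, whereas the scalars appearing in $\nabla Q_i(s_1)=\ip{s_1}{v_i}u_i$ (resp. $\ip{s_1}{u_i}v_i$) are nonzero simply because no four of the $s_k$ lie on a hyperplane; and even with all four gradients nonzero, rank $3$ does not follow unless one knows that $u_1,u_2,v_3,v_4$ actually span the $3$-dimensional space $s_1^{\perp}$. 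That spanning statement is the missing ingredient; equivalently, $L_{T_1}\cap L_{T_2}\cap L_{\overline{T}_3}\cap L_{\overline{T}_4}$ is the line spanned by $s_1$, which follows from general linear position and is precisely the same intersection fact you already need (and only sketch) in the $|A|=2$ case of the first condition. The paper derives both halves of the proposition from exactly this geometric fact, so your argument closes once you prove it; as written, it is asserted rather than established, and the one explicit justification given for it is circular.
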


\begin{proof}
We first show that $\Gamma$ forces no additional zeroes on quadratic forms. Recall that $Q_i=\ip{x}{u_i}\ip{x}{v_i}$ and the forms $Q_i$ form a basis of $I_{2}(\Gamma)$. It suffices to show that the forms $Q_i$ have no common zeroes outside of $\Gamma$.

Let $Z_{\mathbb C}(Q_i)$ denote the complex zero set of the forms $Q_i$, $1\leq i\leq 4$ and let $z$ be a nonzero point in the intersection $\cap_{i=1}^4 Z_{\mathbb C}(Q_i)$. First assume that $z \in \cap_{i=1}^4 Z_{\mathbb R}(Q_i)$. It follows that, for each $i$, we either have $\ip{z}{u_i}=0$ or $\ip{z}{v_i}=0$. Since $u_i$ and $v_i$ form a basis of $\mathbb{R}^4$, the vector $z$ cannot be orthogonal to all four $u_i$ or $v_i$. If $\ip{z}{u_i}=0$ for three indices $i$, which we may assume, without loss of generality, to be 1, 2 and 3, then it follows that $z$ is a multiple of $u_4^*$. But then $\ip{z}{u_4} \neq 0$  and, from Lemma \ref{inprod}, we know that $\ip{z}{v_4} \neq 0$. Therefore, $Q_4(z) \neq 0$, which is a contradiction.

It thus must happen that $z$ is orthogonal to at most 	two of the vectors $u_i$ and to two of the vectors $v_i$. Again, without loss of generality, we may assume that $z$ is orthogonal to $u_1$, $u_2$, $v_3$ and $v_4$. Since $u_1$ comes from the triple $\{1,2,3\}$, $u_2$ comes from $\{1,4,5\}$, $v_3$ comes from $\{1,3,5\}$ and $v_4$ comes from $\{1,2,4\}$, it follows that $z$ lies in the intersection of the spans of $\{s_1, s_2, s_3\}$, $\{s_1, s_4, s_5\}$, $\{s_1,s_3,s_5\}$ and $\{s_1,s_2,s_4\}$. Since the points $s_i$ are in general linear position, we infer that $s_1$ spans this intersection. The other points $s_i$ arise in the same manner from choosing different pairs of $u_i$'s and $v_i$'s. If $z$ is complex, then the same arguments as before applied to the real and imaginary part of $z$ imply the claim.

For the second condition of $2$-independence, we need to show that for any $s_i \in S$ there exists a unique (up to a constant multiple) form in $I_{2}(\Gamma)$ that is singular at $s_i$. Again, by symmetry, we only need to prove this for $s_1$. By construction, $s_1$ is orthogonal to $u_1$, $u_2$, $v_3$ and $v_4$. Therefore, it follows that
$\nabla Q_1(s_1)=\ip{v_1}{s_1}u_1$, $\nabla Q_2(s_1)=\ip{v_2}{s_1}u_2$, $\nabla Q_3(s)=\ip{u_3}{s_1}v_3$ and $\nabla Q_4(s)=\ip{u_4}{s_1}v_4$. The coefficients of the vectors $u_1$, $u_2$, $v_3$ and $v_4$ are nonzero, and since the points $s_i$ are in general linear position, it follows that $u_1$,$u_2$, $v_3$ and $v_4$ span the vector space $s_1^{\perp}.$ Therefore, there is only one (up to a constant multiple) linear combination of gradients of $Q_i$ that vanishes at $s_1$.
\end{proof}

Note that from the above proofs, it follows that $\dim \Sigma_{4,4}(\Gamma)=10$. On the other hand, by the Alexander-Hirschowitz Theorem we know that $\dim P_{4,4}(\Gamma)=11$. We have thus shown part (i) of Theorem \ref{thm:Difference4}.
In the remaining part of this section, we will provide the proof of Theorem \ref{thm:Difference4} (ii).  

\begin{prop}\label{prop:genInd}
 Let $\Gamma \subset \mathbb R^4$ be in general linear position with $|\Gamma| \leq 6$. Then $\Gamma$ is $2$-independent. 
\end{prop}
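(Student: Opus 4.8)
The plan is to reduce the general case $|\Gamma|\le 6$ to the case $|\Gamma|=6$, which has already been established in Proposition~\ref{prop:2-Independence}. The key observation is that $2$-independence of a set $\Gamma$ is inherited by its subsets that are themselves in general linear position, at least for the first condition, but not automatically for the second. So the cleaner route is extension rather than restriction: given $\Gamma\subset\mathbb R^4$ in general linear position with $k=|\Gamma|\le 6$, the remark following the definition of general linear position says we may add points one at a time and keep general linear position, so choose $\Gamma'\supset\Gamma$ with $|\Gamma'|=6$ in general linear position. By Proposition~\ref{prop:2-Independence}, $\Gamma'$ is $2$-independent. Now I would argue that $2$-independence descends from $\Gamma'$ to $\Gamma$.

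For the first condition of Definition~\ref{def:dIndependence} (here with $d=2$): the forms in $I_2(\Gamma)$ must share no common complex zero outside $\Gamma$. Suppose $z\in\CC\PP^3$, $z\notin\Gamma$, lies on every quadric through $\Gamma$. If $z\notin\Gamma'$, then $z$ lies on every quadric through the larger set $\Gamma'\supset\Gamma$ only if... — this is not immediate, since $I_2(\Gamma')\subseteq I_2(\Gamma)$ goes the wrong way. Instead I would proceed directly: $\dim I_2(\Gamma)=10-k$ because the vanishing conditions at $k\le 6$ points in general linear position are independent (this is part of what $2$-independence of $\Gamma'$ gives, restricted to the $k$ chosen points, since independence of a larger set of linear conditions implies independence of any subset). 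One then shows the quadrics through $k\le 6$ general points cut out exactly those $k$ points. For $k=6$ this is Proposition~\ref{prop:2-Independence}. For $k\le 5$ one can add points of $\Gamma'\setminus\Gamma$ to any putative extra common zero $z$: if $z$ is a real point, $\{s_1,\dots,s_k,z\}$ together with enough further points of $\Gamma'$ gives a set of $6$ points, and if this set is still in general linear position we contradict Proposition~\ref{prop:2-Independence} (every quadric through $\Gamma$ passes through $z$, so there is no quadric through $\Gamma\cup\{z\}$ missing a sixth point, violating that $6$ general points impose independent conditions and are cut out by their quadrics). The degenerate possibility that $z$ lies on a hyperplane with $3$ of the $s_i$, or that $z$ is complex, is handled as in the proof of Proposition~\ref{prop:2-Independence}: take real and imaginary parts, and use that a quadric containing $4$ general points of a hyperplane contains that hyperplane, which forces the geometry to collapse.

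For the second condition, I need that for each $s\in\Gamma$ the forms in $I_2(\Gamma)$ singular at $s$ form a space of codimension exactly $|\Gamma|+n-1=k+3$ in $H_{4,2}$, equivalently (since $\dim H_{4,2}=10$) a space of dimension $7-k$, equivalently a space of dimension $\dim I_2(\Gamma)-3=(10-k)-3$, i.e.\ the three gradient conditions at $s$ are independent on $I_2(\Gamma)$. Fix $s=s_i\in\Gamma$ and pick $s_i$'s image in the size-$6$ set $\Gamma'$; by Proposition~\ref{prop:2-Independence} applied to $\Gamma'$, the gradient evaluation map $I_2(\Gamma')\to (s_i^\perp)^*\cong\RR^3$ is surjective. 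Since $I_2(\Gamma')\subseteq I_2(\Gamma)$, the same map on the larger space $I_2(\Gamma)$ is also surjective, which is exactly the claim. This inclusion argument is the crux and it works in the right direction here because we are asserting surjectivity, not injectivity. I expect the main obstacle to be the careful bookkeeping in the first condition when $z$ is forced onto a coordinate-type hyperplane or is genuinely complex — one must re-run the case analysis of Proposition~\ref{prop:2-Independence}, checking that adding points of $\Gamma'\setminus\Gamma$ preserves general linear position, or else invoke that the argument there never used all six points simultaneously in an essential way. Modulo that, the proposition follows.
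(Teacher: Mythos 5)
Your treatment of the second condition is correct and is actually cleaner than the paper's: the paper extends $\Gamma$ to a six-point set $\Gamma'$ and then runs an explicit downward induction, computing gradients of the concrete basis $Q_1,\dots,Q_5$ at each $s_i$ to count one extra degree of freedom per removed point, whereas you simply observe that the six-point case makes the gradient evaluation map $I_2(\Gamma')\to s^\perp$ surjective and that surjectivity persists on the larger space $I_2(\Gamma)\supseteq I_2(\Gamma')$; combined with the (inherited) independence of the evaluation conditions this gives $\dim\{p\in I_2(\Gamma): \nabla p(s)=0\}=(10-|\Gamma|)-3$, i.e.\ codimension $|\Gamma|+3$ in $H_{4,2}$, which is exactly the second condition. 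That part of your proposal I would accept as written.

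The first condition, however, has a genuine gap, and it originates in your dismissal of the inclusion argument as going ``the wrong way.'' It goes the right way: if $z\in\CC\PP^3$ is a common zero of \emph{all} of $I_2(\Gamma)$, then in particular $z$ is a common zero of the subspace $I_2(\Gamma')\subseteq I_2(\Gamma)$, so by the first condition for the six-point set (Proposition~\ref{prop:2-Independence}) $z\in\Gamma'$ --- and this disposes of complex $z$ and of all degenerate positions of $z$ in one stroke. All that remains is to exclude the points $t\in\Gamma'\setminus\Gamma$, which only requires exhibiting one quadric vanishing on $\Gamma$ but not at $t$; this is what the paper does with $Q_5(x)=\langle x,u_1\rangle\langle x,u_2\rangle$ (and it also follows from independence of the evaluation conditions at the six points of $\Gamma'$). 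Your substitute route --- adjoin $z$ and further points of $\Gamma'$ to build a new six-point configuration and invoke Proposition~\ref{prop:2-Independence} for it --- fails exactly in the cases you flag: when $\Gamma\cup\{z\}$ is not in general linear position, or when $z$ is complex (you cannot extend a configuration containing a complex point and appeal to a statement about real points in general linear position). The patch you sketch rests on the claim that a quadric containing four general points of a hyperplane contains that hyperplane; this is false: the restriction of a quadric to a hyperplane is a plane conic, and the conics through four general points of a plane form a pencil whose members are generically smooth, not identically zero. So as written the first condition is not established for $|\Gamma|\leq 5$; replacing that portion with the inclusion argument above (plus one explicit quadric per discarded point) closes the gap.
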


\begin{proof}
For $|\Gamma| = 6$, the statement is already proven in Proposition \ref{prop:2-Independence}. Let now $|\Gamma| \leq 5$. It is easy to see that the first condition of $2$-independence is still
satisfied whenever points are in general linear position. Indeed, we already know that the forms
$Q_1,\dots, Q_4$ do not have any zeroes outside of $\{s_1,\dots, s_6\}$. We define $Q_5(x) = \langle x,u_1\rangle\langle x,u_2\rangle$, which extends $Q_1,\dots,Q_4$ to a basis of $I_2(\Gamma)$. Since the points are in general linear position, we have 
$Q_5(s_6) \neq 0$. Hence, we see that the first condition of $2$-independence is satisfied for 
$|\Gamma|=5$. If $\Gamma$ is of smaller cardinality, we can always extend $\Gamma$ to a set $\widetilde{\Gamma}$ of cardinality $5$ that is in general linear position. 
In the next step, one can extend a basis $Q_1,\ldots,Q_5$ for $I_{2}(\widetilde{\Gamma})$ to a basis 
of $I_{2}(\Gamma)$. Using that those new polynomials do not vanish at all of $\widetilde{\Gamma}$ and using that $\widetilde{\Gamma}$ satisfies 
the first condition of $2$-independence, one can infer that also $\Gamma$ satisfies this condition.\\
It remains to verify the second condition of $2$-independence. I.\,e., we need to show that the vector space of quadratic forms vanishing at $\Gamma$ and that are singular 
at exactly one point of $\Gamma$ is of dimension $7-|\Gamma|$. We proceed by induction on $|\Gamma|$. For $|\Gamma|=6$, the claim follows from Proposition 6.3. For $|\Gamma|<6$, we extend $\Gamma$ 
to a set $\widetilde{\Gamma}$ with $|\widetilde{\Gamma}|=6$ in general linear position. By induction, we know that the vector space of quadratic forms vanishing at $\widetilde{S}$ and that 
are singular at one point of $\Gamma$ is of dimension $1$. This already implies that the dimension of the vector space of quadratic forms vanishing atly on $\Gamma$ and 
that are singular at exactly one point of $\Gamma$ is of dimension at most $1+|\widetilde{\Gamma}\setminus \Gamma|$. Hence, it suffices to show that, when decreasing the number of points 
in $\Gamma$, we gain one new linearly independent relation per point. We demonstrate this explicitly only for $|\Gamma|=5$ since all the other cases follow the same line of arguments. 
Let $P=\sum_{i=1}^5\alpha_iQ_i$ be singular at $s_1$, where $\alpha_i \in \mathbb R$. Then
$\nabla P(s_1)=\sum_{i=1}^4 \alpha_i\nabla Q_i(s_1)$. As in the proof of Proposition \ref{prop:2-Independence} (up to multiples), there is only one possible solution of this equation. Moreover,
$Q_5$ is clearly double-vanishing at $s_1$ and linearly independent of the former linear combination, which shows that the dimension of the vector space of quadratic forms vanishing at 
$s_1,\ldots,s_5$ and being singular at $s_1$ is $2$. For $s_2,\ldots,s_5$, the situation is a bit different since $\nabla Q_5(s_i)\neq 0$ for $2\leq i\leq 5$. If $P$ is singular at
$s_2$, then 
\begin{equation*}
\nabla P(s_2)=(\alpha_1\langle s_2,v_1\rangle+\alpha_5\langle s_2,u_2\rangle) u_1+ \alpha_2\langle s_2,u_2\rangle v_2+\alpha_3\langle s_2,v_3\rangle u_3+ \alpha_4\langle s_2,u_4\rangle v_4.
\end{equation*}
The same arguments as before show that $\alpha_1\langle s_2,v_1\rangle+\alpha_5\langle s_2,u_2\rangle,\alpha_2,\alpha_3,\alpha_4$ are uniquely determined (up to multiples). 
This gives one additional degree of freedom for choosing $\alpha_1$ and $\alpha_5$. Hence, again, the required dimension is $2$. For the other cases, one proceeds in an analogous way.
\end{proof}

We remark that for $n = 4$ the above proposition is stronger than the consequence of Proposition \ref{prop:dIndependent} since it explicitly classifies which generic point configurations in $\mathbb R^4$ are $2$-independent.

Consider the following forms
$$Q_5 = \langle x,u_1\rangle\langle x,u_2\rangle, Q_6 = \langle x,u_1\rangle\langle x,u_3\rangle, Q_7 = \langle x,u_2\rangle\langle x,u_4\rangle, Q_8 = \langle x,u_1\rangle\langle x,u_4\rangle.$$

Finally, we can provide the proof of Theorem \ref{thm:Difference4} (ii).

\begin{proof}[Proof of Theorem \ref{thm:Difference4}(ii)]
First, consider the case $|\Gamma| = 5$. Then, $\dim I_2(\Gamma) = 5$ and the forms $Q_1,\dots,Q_5$ form a basis of $I_2(\Gamma)$ (to see this, evaluate at $u_1^*,\dots,u_4^*$ and $s_6$). We claim that all $\binom{5+1}{2} = 15$ pairwise products $Q_iQ_j$, $1\leq i,j\leq 5$ are linearly independent, which is the right dimension count, since $\dim I_{4}^{(2)}(\Gamma) = 35 - 20 = 15$. Again, by evaluating at the points $u_1^*,\dots,u_4^*$ and $s_6$, we see that the pairwise products $Q_i^2$, $1\leq i\leq 5$ are linearly independent from the pairwise products $Q_iQ_j$ with $1\leq i < j\leq 5$. Hence, it remains to prove linear independence of the latter ones. For this aim, we use similar techniques as before. Suppose that those forms are linearly dependent, i.\,e., there exists a linear combination
\begin{equation}\label{eq:linComb} 
\sum_{1\leq i<j\leq 5}^{}\alpha_{ij}Q_iQ_j = 0,
\end{equation}
 where not all $\alpha_{ij}$ are zero.
Consider the evaluation of \eqref{eq:linComb} at the point $u_4^*$. The only forms that vanish with order $1$ at $u_4^*$ are $Q_1Q_4, Q_2Q_4$ and $Q_3Q_4$. The remaining ones vanish to higher order.
Hence, differentiating $P$ and subsequently evaluating at $u_4^*$ yields the following $4\times 3$ system of linear equations (note that $Q_i(u_i^*) \neq 0$):
$$\alpha_{14}\frac{\partial Q_1}{\partial x_k}(u_4^*) + \alpha_{24}\frac{\partial Q_2}{\partial x_k}(u_4^*) + \alpha_{34}\frac{\partial Q_3}{\partial x_k}(u_4^*) = 0,\quad 1\leq k\leq 4.$$
By the same arguments as in the proof of Lemma \ref{Lem:pairwise4}, we can conclude that $\alpha_{14}=\alpha_{24}=\alpha_{34} = 0$. Now evaluate \eqref{eq:linComb} at $u_3^*$. This time, the forms $Q_1Q_3, Q_2Q_3$ are the only pairwise products vanishing with order $1$. This yields a $4\times 2$ system of linear equations, from which we infer $\alpha_{13} = \alpha_{23} = 0$ since, in this case, vanishing of all $2\times 2$ minors implies linear dependence of $u_1, u_2$ contradicting the fact that $u_1,\dots,u_4$ form a basis of $\mathbb R^4$. Analogously, evaluating \eqref{eq:linComb} at $u_1^*$ yields $\alpha_{12}=\alpha_{15} = 0$ by exactly the same arguments as before. So, we are left with the pairwise products $Q_2Q_5, Q_3Q_5,Q_4Q_5$. These forms are clearly linearly independent since the forms $Q_i$, $2\leq i\leq 4$ are linearly independent. Hence, the claim follows.

Now, assume $|\Gamma| < 5$. In this case, note that there is always an overcount in the pairwise products. For example, in the case $|\Gamma| = 4$, there are $\binom{6+1}{2} = 21$ pairwise products. Since $\dim I_{4}^{(2)}(\Gamma) = 35 - 16 = 19$, we need to prove that, out of these $21$ pairwise products, there exist $19$ pairwise products that are linearly independent. Since the proof uses exactly the same strategy as in the case $|\Gamma| = 5$ and does not contain new arguments, in the next table, we only provide a basis for $I_{4}^2(\Gamma)$ for $|\Gamma| < 5$. For $|\Gamma| = m$, the forms $Q_i,\, 1\leq i\leq 10 - m$ form a basis of $I_2(\Gamma)$. We set $L = \{Q_iQ_j | i\leq j\}$ and use $B(\Gamma) \subset L$ to denote a basis of $I_{4}^2(\Gamma)$.

$$
\begin{tabular}{c|c|c|c}\hline 
 $|\Gamma| $ & $\dim I_{4}^2(\Gamma)$ & $\dim I_{4}^{(2)}(\Gamma)$ & $B(\Gamma)$\\ \hline 
 $4$ & $19$ & $19$ & $L\setminus\{Q_2Q_6,Q_3Q_5\}$ \\ \hline 
 $3$ & $23$ & $23$ & $L\setminus\{Q_1Q_7,Q_2Q_6,Q_3Q_5,Q_4Q_5,Q_5Q_6\}$\\ \hline 
 $2$ & $27$ & $27$ & $L\setminus\{Q_1Q_7,Q_2Q_6,Q_2Q_8,Q_3Q_8,Q_4Q_6,Q_5Q_7,Q_6Q_7,Q_6Q_8,Q_7Q_8\}$ \\ \hline 
\end{tabular}
$$

Note that, for $|\Gamma| = 1$, we always have $\dim I_{4}^2(\Gamma) = \dim I_{4}^{(2)}(\Gamma)$, and, hence, the proof is finished.
\end{proof}

From Theorem \ref{thm:Difference4} we can immediately infer Corollary \ref{cor:diff4}.

\section{Explicit characterization of $1$-dimensional differences}
\label{Sec:1dim}
The aim of this section is to explicitly characterize a dimensional difference between $P_{n,2d}(\Gamma)$ and $\Sigma_{n,2d}(\Gamma)$ for $(n,2d)=(4,4)$ and $(n,2d)=(3,6)$. By Theorems  \ref{thm:diff4} and \ref{thm:ternary}, the first time those differences occur is exactly for $|\Gamma|=6$, respectively, $|\Gamma|=7$, and it follows by dimension counting that the dimensional difference in these cases is exactly one. Hence, constructing forms in $I^{(2)}_{2d}(\Gamma)\setminus I^2_{2d}(\Gamma)$ already yields a complete characterization of the occuring dimensional differences in the two smallest cases, in which  nonnegative forms that are not sums of squares exist.

\subsection{The case $(n,2d)=(4,4)$}
In order to describe the dimensional difference, we need to construct a form $R$ of degree $4$ such that $R\in I^{(2)}_{4}(\Gamma)\setminus I^2_{4}(\Gamma)$.  

\begin{prop}\label{prop:diff}
Let $\Gamma\subset \mathbb{R}^4$ be in general linear position with $|\Gamma|=6$. Set $R=\ip{x}{u_1}\ip{x}{u_2}\ip{x}{u_3}\ip{x}{u_4}$. Then $R\in I^{(2)}_{4}(\Gamma)\setminus I^2_{4}(\Gamma)$.  
\end{prop}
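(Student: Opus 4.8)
The plan is to verify the two memberships separately: first that $R \in I^{(2)}_4(\Gamma)$, i.e.\ $R$ vanishes to order $2$ at each $s_i$, and then that $R \notin I^2_4(\Gamma)$, i.e.\ $R$ is not in the span of the pairwise products $Q_iQ_j$.

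For the first part I would recall that each $s_i$ lies in exactly two of the hyperplanes $L_{T_1},\dots,L_{T_4}$ (this is the combinatorial ``each point in exactly two triples'' property of the covering). Concretely, $s_i$ is orthogonal to exactly two of the vectors $u_1,u_2,u_3,u_4$. Since $R = \prod_{j=1}^4 \ip{x}{u_j}$ has two of its four linear factors vanishing at $s_i$, any first partial derivative of $R$ is a sum of terms each of which still contains at least one of those two vanishing factors; hence $\nabla R(s_i) = 0$. This shows $R \in I^{(2)}_4(\Gamma)$, and since $R$ is not identically zero, $R$ is a genuine element.

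For the second part, the natural tool is evaluation at the dual points $u_i^*$, exactly as in the proof of Lemma~\ref{Lem:pairwise4}. We have $R(u_i^*) = \prod_{j=1}^4 \ip{u_i^*}{u_j} = 0$ for every $i$, since one of the factors is $\ip{u_i^*}{u_i} = 1$ but another is $\ip{u_i^*}{u_k} = 0$ for $k \neq i$ — wait, more carefully: $R(u_i^*) = \prod_j \ip{u_i^*}{u_j}$, and for $j \neq i$ this factor is $0$, so $R(u_i^*) = 0$. Moreover $R$ vanishes to order $\geq 2$ at each $u_i^*$: of the four linear factors $\ip{x}{u_j}$, the three with $j \neq i$ all vanish at $u_i^*$. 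So $R$ and all its first partials vanish at every $u_i^*$. Now suppose for contradiction that $R = \sum_{1\le i\le j\le 4} \beta_{ij} Q_iQ_j$ for scalars $\beta_{ij}$. Evaluating at $u_i^*$: only $Q_i^2$ fails to vanish there (by Lemma~\ref{inprod}, $Q_i(u_i^*) = \ip{u_i^*}{v_i} \neq 0$, while $Q_k(u_i^*) = 0$ for $k\ne i$), so $0 = R(u_i^*) = \beta_{ii} Q_i(u_i^*)^2$ forces $\beta_{ii} = 0$ for all $i$. Thus $R = \sum_{i<j}\beta_{ij}Q_iQ_j$. Differentiating and evaluating at $u_1^*$: since $\beta_{ii}=0$ and the only surviving order-$1$ terms are $Q_1Q_j$ for $j \neq 1$, and $\nabla R(u_1^*) = 0$, we get $\sum_{1<j\le 4}\beta_{1j}\nabla Q_j(u_1^*) = 0$ (after dividing by $Q_1(u_1^*)\neq 0$); the argument in Lemma~\ref{Lem:pairwise4} via the cross product of $u_2,u_3,u_4$ then gives $\beta_{12}=\beta_{13}=\beta_{14}=0$, and symmetrically all $\beta_{ij}=0$. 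Hence $R = 0$, contradicting that $R$ is a nonzero form (its factors $\ip{x}{u_j}$ are nonzero since the $u_j$ are a basis). Therefore $R \notin I^2_4(\Gamma)$.

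The main obstacle, and the only place requiring real care, is the order-of-vanishing bookkeeping: one must check cleanly that $R$ double-vanishes at each $s_i$ (using that $s_i \in L_{T_a}\cap L_{T_b}$ for two indices $a\ne b$ from the covering's combinatorics) and at each $u_i^*$ (using that three of the four factors vanish there). The linear-independence step is essentially a rerun of the proof of Lemma~\ref{Lem:pairwise4} with $R$ playing the role of the alleged dependency, so I would simply invoke that argument rather than redo the cross-product computation in full. One subtlety to note: we also use implicitly that $\dim I^{(2)}_4(\Gamma) = 11 > 10 = \dim I^2_4(\Gamma)$ from Proposition~\ref{prop:2-Independence} and the Alexander--Hirschowitz count, so that exhibiting a single form in the difference is both necessary and sufficient — but strictly for this proposition only the two memberships need to be checked.
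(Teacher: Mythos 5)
Your proof is correct, but the second half takes a genuinely different route from the paper. The paper's own argument, after the identical step of evaluating at the $u_i^*$ to force $\beta_{ii}=0$, finishes in one stroke by evaluating at the \emph{other} dual configuration $v_k^*$: by Lemma~\ref{inprod} one has $R(v_k^*)=\prod_j\ip{v_k^*}{u_j}\neq 0$, while every mixed product satisfies $Q_iQ_j(v_k^*)=0$ for $i\neq j$ (one of the indices differs from $k$, so the corresponding factor $\ip{v_k^*}{v_i}$ vanishes) --- an immediate contradiction with no derivatives at all. You instead differentiate the alleged relation and evaluate the gradient at each $u_i^*$, using $\nabla R(u_i^*)=0$ (three of the four factors vanish there) together with $\nabla Q_j(u_i^*)=\ip{u_i^*}{v_j}\,u_j$ and the linear independence of $u_2,u_3,u_4$ (the cross-product step of Lemma~\ref{Lem:pairwise4}) to kill all mixed coefficients, and then contradict $R\neq 0$; in effect you prove that $\{Q_iQ_j\}\cup\{R\}$ is linearly independent, which is exactly how the paper handles the analogous $(3,6)$ construction rather than the $(4,4)$ one. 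Both arguments are sound: the paper's $v_k^*$-evaluation is shorter and avoids the order-of-vanishing bookkeeping, whereas yours stays entirely on the $u$-side (using Lemma~\ref{inprod} only for the nonvanishing of $Q_i(u_i^*)$ and $\ip{u_i^*}{v_j}$) and has the additional merit of explicitly verifying the membership $R\in I^{(2)}_4(\Gamma)$ via the double-covering combinatorics, a point the paper leaves implicit.
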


\begin{proof}
We know that products $Q_iQ_j$ with $1\leq i \leq j\leq 4$ form a basis of $I^{2}_4(\Gamma)$. We observe that $R(u_i^*)=0$ for all $i$, and the only form from the spanning set that does not vanish at $u_i^*$ is $Q_i^2$. Therefore, if we assume that $R$ is spanned by $Q_iQ_j$, then $R$ needs to be spanned by products $Q_iQ_j$ with $i\neq j$.

Now consider $R(v_k^*)$. By Lemma \ref{inprod}, we know that $R(v_k^*) \neq 0$. However, $Q_iQ_j(v_k^*)=0$ since $\ip{v_i^*}{v_k}=0$ for $i \neq k$. Therefore, we arrive at a contradiction.
\end{proof}

\begin{cor}
 Let $\Gamma \subset \mathbb R^4$ be in general linear position with $|\Gamma| = 6$.
 There exists $p \in P_{4,4}(\Gamma)\setminus \Sigma_{4,4}(\Gamma)$ with $\Gamma \subset Z(p)$. These forms can be constructed 
via $Q_1^2 + Q_2^2 + Q_3^2 +Q_4^2 +\varepsilon R$ for
sufficiently small $\varepsilon > 0$.
\end{cor}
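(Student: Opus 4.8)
The plan is to combine Proposition~\ref{prop:diff} with the extension machinery of Section~\ref{dimsection}. First I would take the form $p = Q_1^2 + Q_2^2 + Q_3^2 + Q_4^2$. By Lemma~\ref{Lem:pairwise4} the forms $Q_1,\dots,Q_4$ are a basis of $I_2(\Gamma)$, and by Proposition~\ref{prop:2-Independence} the set $\Gamma$ is $2$-independent; hence $p$ is exactly the sum-of-squares certificate constructed in the proof of Proposition~\ref{prop:posFace}. In particular $Z(p) = \Gamma$ and $p$ is round at every point of $\Gamma$, so $p$ satisfies the hypotheses of Corollary~\ref{ext2} (equivalently Lemma~\ref{ext}).

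Next, set $R = \ip{x}{u_1}\ip{x}{u_2}\ip{x}{u_3}\ip{x}{u_4}$. Since each $u_i$ is the normal to a hyperplane $L_{T_i}$ that contains exactly two of the points of $\Gamma$, and every point of $\Gamma$ lies on exactly two of the $L_{T_i}$, each $s_j \in \Gamma$ satisfies $\ip{s_j}{u_i} = 0$ for exactly two indices $i$; therefore $R$ vanishes to order $2$ at every point of $\Gamma$, i.e.\ $R \in I^{(2)}_4(\Gamma)$ (this is also part of the content of Proposition~\ref{prop:diff}). Applying Lemma~\ref{ext} with $q = R$, we conclude that for all sufficiently small $\varepsilon > 0$ the form $p + \varepsilon R = Q_1^2 + Q_2^2 + Q_3^2 + Q_4^2 + \varepsilon R$ is nonnegative, and it still vanishes at $\Gamma$, so it lies in $P_{4,4}(\Gamma)$.

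It remains to check that $p + \varepsilon R \notin \Sigma_{4,4}(\Gamma)$ for small $\varepsilon > 0$. By Proposition~\ref{prop:sosFace}, $\Sigma_{4,4}(\Gamma)$ spans exactly $I^2_4(\Gamma)$, so it suffices to show $p + \varepsilon R \notin I^2_4(\Gamma)$. But $p \in I^2_4(\Gamma)$ while $R \notin I^2_4(\Gamma)$ by Proposition~\ref{prop:diff}, and $I^2_4(\Gamma)$ is a linear subspace; hence $p + \varepsilon R \notin I^2_4(\Gamma)$ for every $\varepsilon \neq 0$. This gives $p + \varepsilon R \in P_{4,4}(\Gamma) \setminus \Sigma_{4,4}(\Gamma)$ with $\Gamma \subset Z(p+\varepsilon R)$, as claimed.

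The only genuine point requiring care is the quantitative choice of $\varepsilon$: Lemma~\ref{ext} guarantees nonnegativity of $p + \varepsilon R$ only for $\varepsilon$ below some threshold depending on $p$ and $R$, so the statement must be read as "there exists $\varepsilon_0 > 0$ such that for all $0 < \varepsilon < \varepsilon_0$ the form works," which is exactly what the corollary asserts. No new obstacle arises beyond verifying that the hypotheses of Lemma~\ref{ext} genuinely hold for our $p$ — roundness at each zero and finiteness of $Z(p)$ — both of which are already established in the proof of Proposition~\ref{prop:posFace} and in Proposition~\ref{prop:2-Independence}.
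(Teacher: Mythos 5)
Your argument is correct and is exactly the route the paper intends (the paper leaves this corollary's proof implicit): combine the roundness certificate $p=\sum Q_i^2$ from the proof of Proposition~\ref{prop:posFace} (valid since $\Gamma$ is $2$-independent and the $Q_i$ form a basis of $I_2(\Gamma)$) with Lemma~\ref{ext} applied to $q=R$, and then rule out membership in $\Sigma_{4,4}(\Gamma)$ via the inclusion $\Sigma_{4,4}(\Gamma)\subset I^2_4(\Gamma)$ and Proposition~\ref{prop:diff}. No gaps; your closing remark about the threshold on $\varepsilon$ is the right reading of the statement.
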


We now provide an explicit example for a form as described in the above corollary. 

\begin{example}\label{sectionexplicitexample} 
 Let $s_1=(0,0,1,1)$, $s_2=(0,1,0,1)$, $s_3=(0,1,1,0)$, $s_4=(1,0,0,1)$, $s_5=(1,0,1,0)$ and $s_6=(1,1,0,0)$ and $\Gamma=\{s_1,\ldots,s_6\}$. 
The following polynomials form a basis of $I_2(\Gamma)$.
\begin{align*}
  Q_1(x)&=x_1(x_1-x_2-x_3-x_4), \quad Q_2(x)=x_2(x_2-x_1-x_3-x_4), \\
 Q_3(x)&=x_3(x_3-x_1-x_2-x_4), \quad Q_4(x)=x_4(x_4-x_1-x_2-x_3)
\end{align*}
The form $R=\ip{x}{u_1}\ip{x}{u_2}\ip{x}{u_3}\ip{x}{u_4}$ from Proposition \ref{prop:diff} becomes $$R=\ip{x}{e_1}\ip{x}{e_2}\ip{x}{e_3}\ip{x}{e_4}=x_1x_2x_3x_4.$$
One can verify that $Q_1^2+Q_2^2+Q_3^2+Q_4^2+R\in P_{4,4}(\Gamma)\setminus \Sigma_{4,4}(\Gamma)$. 
\end{example}

\subsection{The case $(n,2d)=(3,6)$}
We consider the case $(n,2d) = (3,6)$ and $\Gamma\subset \mathbb{R}^3$ with $|\Gamma|=7$. Compared to the case $(n,2d)=(4,4)$ from the previous section,
the situation becomes more involved. 
Let $\Gamma=\{s_1,\dots,s_7\}$. 
Let $u_1,u_2,u_3$ be the normal vectors to the hyperplanes passing through $s_1,s_2$, respectively, $s_3,s_4$, respectively, $s_5,s_6$.
Note that, generically, $u_1,u_2,u_3$ are a basis of $\mathbb{R}^3$. Let $u_1^*,u_2^*, u_3^*$ 
be the dual basis to $u_1,u_2,u_3$. Furthermore, we define 
 $K_1, K_2, K_3$ to be the conics passing through the points $s_i$ with $i\in \{3,4,5,6,7\}$, respectively $i\in\{1,2,5,6,7\}$, respectively 
$i\in\{1,2,3,4,7\}$. Generically, we can assume that $K_i(u_j^*)\neq 0$ for $1\leq i\neq j\leq 3$.

\begin{lemma}
 Let $Q_1(x) = \langle x,u_1\rangle K_1$, $Q_2(x) = \langle x,u_2\rangle K_2$ and $Q_3(x) = \langle x,u_1\rangle K_3$. \\
 Then
$\{Q_1(x),Q_2(x),Q_3(x)\}$ is a basis of $I_3(\Gamma)$.
\end{lemma}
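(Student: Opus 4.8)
The statement is that the three degree-$3$ forms $Q_1 = \langle x,u_1\rangle K_1$, $Q_2 = \langle x,u_2\rangle K_2$, $Q_3 = \langle x,u_1\rangle K_3$ form a basis of $I_3(\Gamma)$ for a generic set $\Gamma = \{s_1,\dots,s_7\} \subset \mathbb{R}^3$. (I will assume $Q_3$ is meant to involve $\langle x,u_1\rangle$ rather than $\langle x,u_3\rangle$; one should double-check this against the intended construction, since symmetry would suggest $\langle x,u_3\rangle$. The argument below adapts to either choice.) Two things have to be checked: first, that each $Q_i$ actually lies in $I_3(\Gamma)$, i.e.\ vanishes at all seven points; and second, that $\dim I_3(\Gamma) = 3$ and the three forms are linearly independent.

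First I would verify membership. For $Q_1 = \langle x,u_1\rangle K_1$: by construction the linear form $\langle x,u_1\rangle$ vanishes on the hyperplane (line in $\mathbb{RP}^2$) through $s_1$ and $s_2$, hence $Q_1(s_1)=Q_1(s_2)=0$; and $K_1$ is the conic through $s_3,s_4,s_5,s_6,s_7$, so $Q_1$ vanishes there too. The same reasoning applies to $Q_2$ (line through $s_3,s_4$; conic $K_2$ through $s_1,s_2,s_5,s_6,s_7$) and to $Q_3$ (line through $s_1,s_2$ — or $s_5,s_6$ in the symmetric reading; conic $K_3$ through $s_1,s_2,s_3,s_4,s_7$). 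In every case the linear factor and the conic factor between them cover all seven points, so $Q_i \in I_3(\Gamma)$.

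Next I would pin down the dimension. We have $\dim H_{3,3} = \binom{5}{2} = 10$, and imposing vanishing at $7$ generic points cuts this down; for $\Gamma$ generic these $7$ linear conditions are independent, so $\dim I_3(\Gamma) = 10 - 7 = 3$. (This genericity statement is standard; alternatively it follows from the Alexander–Hirschowitz-type count already invoked in the paper, or can be seen directly since $7 \le \binom{5}{2}$ and one checks the interpolation matrix has full rank for a concrete choice of points, which then holds on a Zariski-open set.) Thus it remains only to show $Q_1, Q_2, Q_3$ are linearly independent.

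For linear independence I would use the dual points $u_1^*, u_2^*, u_3^*$, exactly in the style of Lemma~\ref{Lem:pairwise4}. Suppose $\alpha_1 Q_1 + \alpha_2 Q_2 + \alpha_3 Q_3 = 0$. Evaluate at $u_2^*$: since $\langle u_2^*, u_1\rangle = 0$, the terms $Q_1$ and $Q_3$ (both carrying the factor $\langle x,u_1\rangle$) vanish at $u_2^*$, while $Q_2(u_2^*) = \langle u_2^*,u_2\rangle K_2(u_2^*) = K_2(u_2^*) \neq 0$ by the genericity assumption $K_i(u_j^*)\neq 0$. Hence $\alpha_2 = 0$. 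It then remains to separate $Q_1$ from $Q_3$; since these share the common linear factor $\langle x,u_1\rangle$, linear independence of $\{Q_1,Q_3\}$ is equivalent to linear independence of the conics $\{K_1, K_3\}$, which holds generically because $K_1$ and $K_3$ pass through different five-point subsets of $\Gamma$ (sharing only $s_1,s_2,s_7$, say) and two distinct conics through only three common points are linearly independent. This forces $\alpha_1 = \alpha_3 = 0$. In the symmetric reading where $Q_3 = \langle x,u_3\rangle K_3$, one instead evaluates successively at $u_1^*, u_2^*, u_3^*$ and reads off $\alpha_i = 0$ directly from $Q_i(u_i^*) = K_i(u_i^*) \neq 0$, which is cleaner.

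\textbf{Main obstacle.} The delicate point is not any single computation but making the genericity bookkeeping airtight: one needs simultaneously that the $7$ points impose independent conditions on cubics, that the five-point conics $K_1,K_2,K_3$ exist and are unique, that $K_i(u_j^*) \neq 0$ for $i \neq j$, and that the relevant pairs of conics are independent — all of these are Zariski-open conditions on $(\mathbb{RP}^2)^7$, and the real work is checking each is nonempty (hence generic) by exhibiting, or citing, one good configuration. The conic-independence step hiding inside ``$\{Q_1,Q_3\}$ independent'' is the one most likely to need a careful word, since it is the only place the argument is not a one-line dual-point evaluation.
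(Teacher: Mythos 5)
Your proof is correct and takes essentially the same route as the paper: the paper's own argument is exactly your dual-point evaluation at $u_1^*,u_2^*,u_3^*$ (with $Q_3=\langle x,u_3\rangle K_3$, confirming that the $u_1$ in the statement is a typo), and your explicit verification of membership and of $\dim I_3(\Gamma)=10-7=3$ just spells out what the paper leaves implicit. One small correction to your side remark: under the literal reading $Q_3=\langle x,u_1\rangle K_3$, the linear factor vanishes only on the line through $s_1,s_2$ while $K_3$ passes through $s_1,s_2,s_3,s_4,s_7$, so $s_5,s_6$ are missed and $Q_3$ would not even lie in $I_3(\Gamma)$; hence your claim that membership holds ``in every case'' is false, and this is precisely why the symmetric reading with $u_3$ is forced.
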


\begin{proof}
Assume that $Q_1(x), Q_2(x)$ and $Q_3(x)$ are linearly dependent, i.\,e., there exist $(\alpha_1,\alpha_2,\alpha_3)\in \mathbb{R}^3\setminus\{(0,0,0)\}$ such 
that 
\begin{equation}\label{eq:Ind}
0=\alpha_1  Q_1(x)+\alpha_2Q_2(x)+\alpha_3Q_3(x).
\end{equation}
Evaluating \eqref{eq:Ind} at $u_i^*$ and using that $\langle u_i^*,u_j\rangle=0 $, for $i\neq j$, and $K_i(u_j^*)\neq 0$, for all $1\leq i,j\leq 3$, we infer
$(\alpha_1,\alpha_2,\alpha_3)=(0,0,0)$. 
 \end{proof}

We now construct an explicit form $R\in I^{(2)}_{6}(\Gamma)\setminus I^2_{6}(\Gamma)$.

\begin{prop}
Let $R = K\langle x,u_1 \rangle \langle x,u_2 \rangle \langle x,u_3 \rangle$, where $K$ is the unique cubic double vanishing at the point $s_7$ and vanishing at 
$s_1,\dots, s_6$ with multiplicity 1 such that $K(u_i^*)\neq 0$ for $1\leq i\leq 3$. Then $R \in I^{(2)}_{6}(\Gamma)\setminus I^2_6(\Gamma)$.
\end{prop}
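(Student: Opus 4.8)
The strategy mirrors the one used in the case $(n,2d)=(4,4)$ (Proposition \ref{prop:diff}): exhibit a point at which the candidate form $R$ behaves differently from every form in $I^2_6(\Gamma)$, using the evaluations at the dual basis $u_1^*,u_2^*,u_3^*$ as a linear functional test. First I would note that $R$ visibly vanishes to order $2$ at $s_7$ (because $K$ does) and to order $1$ at $s_1,\dots,s_6$; moreover each of the linear factors $\langle x,u_k\rangle$ vanishes at two of the points $s_1,\dots,s_6$ and $K$ vanishes at all six, so $R$ actually double vanishes at every point of $\Gamma$. Hence $R\in I^{(2)}_6(\Gamma)$, and the real content is showing $R\notin I^2_6(\Gamma)$.

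For the negative part, recall that $\{Q_1,Q_2,Q_3\}$ is a basis of $I_3(\Gamma)$, so $I^2_6(\Gamma)$ is spanned by the six pairwise products $Q_iQ_j$, $1\le i\le j\le 3$. Suppose for contradiction that $R=\sum_{i\le j}\alpha_{ij}Q_iQ_j$. The first step is to kill the square terms: evaluating at $u_k^*$, every $Q_iQ_j$ with $\{i,j\}\neq\{k\}$ vanishes (since $\langle u_k^*,u_i\rangle=0$ for $i\neq k$ makes the linear factor vanish, and note $Q_1,Q_3$ both have linear factor $\langle x,u_1\rangle$ so one must check the indexing carefully — here $Q_1(u_2^*)=Q_1(u_3^*)=0$, $Q_2(u_1^*)=Q_2(u_3^*)=0$, $Q_3(u_2^*)=Q_3(u_3^*)=0$, and $Q_3(u_1^*)\neq 0$, $Q_1(u_1^*)\neq 0$), while $R(u_k^*)=0$ because $R$ contains the factor $\langle x,u_1\rangle\langle x,u_2\rangle\langle x,u_3\rangle$ which vanishes at every $u_k^*$. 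This forces the coefficients of $Q_1^2,Q_2^2,Q_3^2$ and of $Q_1Q_3$ (which also does not vanish at $u_1^*$) to vanish, so $R$ lies in the span of $Q_1Q_2$ and $Q_2Q_3$.

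The second step is to derive a contradiction from $R=\alpha_{12}Q_1Q_2+\alpha_{23}Q_2Q_3$. Here I would compare the order of vanishing, or the explicit factorizations, at an appropriate point. Both $Q_1Q_2$ and $Q_2Q_3$ carry the factor $\langle x,u_2\rangle$ (from $Q_2$) and the factor $\langle x,u_1\rangle$ (from $Q_1$ or $Q_3$), so every form in their span is divisible by $\langle x,u_1\rangle\langle x,u_2\rangle$; factoring that out reduces the identity to $K\langle x,u_3\rangle = \alpha_{12}K_1K_2 + \alpha_{23}K_2K_3 = K_2(\alpha_{12}K_1+\alpha_{23}K_3)$. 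Thus $K_2$ would have to divide $K\langle x,u_3\rangle$. But $K_2$ is an irreducible conic (generically), $\langle x,u_3\rangle$ is linear, and $K$ is a cubic singular at $s_7$; one checks that $s_7\notin Z(K_2)$ generically (indeed $K_2$ passes through $s_1,s_2,s_5,s_6,s_7$ — so actually $s_7\in Z(K_2)$, which means I must instead use a point off all the relevant loci), so the cleaner argument is: evaluate the reduced identity at a point where $K_2$ vanishes but $K\langle x,u_3\rangle$ does not, or conversely track the zero $s_7$: $K$ is \emph{singular} at $s_7$ whereas $K_2$ is smooth there, so $K_2 \nmid K\langle x,u_3\rangle$ unless $\langle x,u_3\rangle$ vanishes at $s_7$, which fails generically. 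Either way we reach a contradiction, so $R\notin I^2_6(\Gamma)$.

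\textbf{Main obstacle.} The delicate point is the last step: unlike the $(4,4)$ case, one cannot simply evaluate at a single dual point $v_k^*$ to separate $R$ from the surviving products, because $Q_1$ and $Q_3$ share a linear factor and the conics $K_i$ do not vanish at the dual basis. The genericity hypotheses ($K_i(u_j^*)\neq 0$, $u_1,u_2,u_3$ a basis, $K$ smooth away from its prescribed singularity, $\langle x,u_3\rangle$ not vanishing at $s_7$) all have to be invoked carefully, and the divisibility/irreducibility argument for the conic $K_2$ is where the real work lies. I would double-check the indexing of which $K_i$ and which $\langle x,u_k\rangle$ appear in each $Q_j$, since the choice $Q_3(x)=\langle x,u_1\rangle K_3$ (reusing $u_1$ rather than introducing $u_3$ into the $Q$'s) is what makes the bookkeeping in the first step work, but also what makes the second step require the conic divisibility argument rather than a one-line evaluation.
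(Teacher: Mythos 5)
Your positive half ($R\in I^{(2)}_6(\Gamma)$) is fine and matches the paper, which treats it as immediate from the construction. The negative half, however, has a genuine gap: it is built on the literal reading $Q_3=\langle x,u_1\rangle K_3$, which cannot be the intended definition (it is a typo in the basis lemma): with that $Q_3$, neither $\langle x,u_1\rangle$ nor $K_3$ vanishes at $s_5,s_6$, so $Q_3\notin I_3(\Gamma)$ and the products $Q_iQ_j$ would not even span $I^2_6(\Gamma)$. The only workable definition is $Q_3=\langle x,u_3\rangle K_3$, and that is what the paper's proof uses (its vanishing-order claims at $u_1^*$ only hold with this indexing). Under the correct indexing both of your key steps collapse: $Q_3(u_1^*)=0$ and $Q_1Q_3$ does vanish at $u_1^*$, so evaluation at the points $u_k^*$ kills only the squares and leaves all three cross terms; and $Q_1Q_2=\langle x,u_1\rangle\langle x,u_2\rangle K_1K_2$ and $Q_2Q_3=\langle x,u_2\rangle\langle x,u_3\rangle K_2K_3$ share no common pair of linear factors, so your reduction to ``$K_2$ divides $K\langle x,u_3\rangle$'' never gets off the ground. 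Moreover, even granting your indexing, the first step is a non sequitur: evaluating the putative relation at $u_1^*$ yields the single equation $\alpha_{11}Q_1^2(u_1^*)+\alpha_{13}(Q_1Q_3)(u_1^*)+\alpha_{33}Q_3^2(u_1^*)=0$, which gives one linear relation among these coefficients, not their individual vanishing.

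What is missing is first-order information at the dual points, which is how the paper separates the cross terms from $R$: after killing the squares by evaluation, it differentiates the relation and evaluates at $u_1^*$, where $Q_1Q_2$ and $Q_1Q_3$ vanish to order $1$ while $Q_2Q_3$ and $R$ vanish to order $2$; a nontrivial solution of the resulting $3\times 2$ system would force all $2\times2$ minors to vanish, i.e.\ $u_2$ and $u_3$ to be proportional, contradicting that $u_1,u_2,u_3$ is a basis, so $\alpha_{12}=\alpha_{13}=0$; then differentiating at $u_2^*$, where $Q_2Q_3$ vanishes to order $1$ but $R$ to order $2$, kills $\alpha_{23}$ and finally $\alpha_R$. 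Your restriction idea could in fact be salvaged with the correct indexing (restricting the relation to each line $\langle x,u_k\rangle=0$ kills, one at a time, the cross term not involving $Q_k$, using that no $K_i$ contains such a line generically), but that is not the argument you wrote, and the divisibility route you do write additionally leans on irreducibility and genericity facts about $K_2$ that are never established. As it stands, the proof of $R\notin I^2_6(\Gamma)$ does not go through.
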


\begin{proof}
By construction, it is clear that $R \in  I^{(2)}_{6}(\Gamma)$. We have to prove that $R$ together with all pairwise products $Q_iQ_j$, $1\leq i\leq j\leq 3$ form a linearly independent set of polynomials.  
Suppose these forms were linearly dependent, i.\,e., there exists a nontrivial linear combination $\alpha_RR + \sum_{1\leq i\leq j\leq 3}^{} \alpha_{ij}Q_iQ_j = 0.$ By evaluating this relation 
at $u_i^*$, we get $\alpha_{ii} = 0$ for $1\leq i\leq 3$. Hence, the squares are linearly independent from the pairwise products $Q_iQ_j$ with $1\leq i < j\leq 3$ and it remains to
prove linear independence of the forms $Q_iQ_j$ with $1\leq i < j\leq 3$. Suppose that these forms are linearly dependent. The forms $Q_1Q_2$ and $Q_1Q_3$ vanish to order $1$ at 
$u_1^*$, whereas the forms $Q_2Q_3$ and $R$ vanish to order $2$. Hence, by taking the partial derivatives and subsequently evaluating them at $u_1^*$, we get for $k \in\{2,3\}$
$$\alpha_{1k}\frac{\partial Q_1Q_k}{\partial x_j}(u_1^*) = \alpha_{1k}\left(\frac{\partial Q_1}{\partial x_j}\cdot Q_k + Q_1\cdot\frac{\partial Q_k}{\partial x_j}\right)(u_1^*) = 
\alpha_{1k}Q_1(u_1^*)\cdot\frac{\partial Q_k}{\partial x_j}(u_1^*) = 0\,\,, \quad 1\leq j\leq 3$$
and hereby the following $3\times 2$ system of linear equations:
$$\alpha_{12}Q_1(u_1^*)\cdot\frac{\partial Q_2}{\partial x_j}(u_1^*) + \alpha_{13}Q_1(u_1^*)\cdot\frac{\partial Q_3}{\partial x_j}(u_1^*) = 0\,\,, \quad 1\leq j\leq 3.$$
Suppose that this system has a nontrivial solution. This is the case if and only if the rank of the corresponding coefficient matrix is one. Hence, all $2\times 2$ minors must vanish.
One can check (by taking the partial derivatives and considering $K_i(u_j^*) \neq 0$) that the rank is one if and only if the cross product of the two vectors 
$u_2$ and $u_3$ is zero, impliyng that $u_2$ and $u_3$ are linearly dependent. But this is a contradiction since $u_1, u_2$ and $u_3$ form a basis of $\mathbb R^3$. Hence, the above system can only have the 
trivial solution $\alpha_{12} = \alpha_{13} = 0$, and we are now left with the equation $\alpha_{23}Q_2Q_3 + \alpha_RR = 0$. However, since the form 
$Q_2Q_3$ vanishes to order $1$ at $u_2^*$ and the form $R$ with order $2$ at $u_2^*$, we get $\alpha_{23} = 0$ by taking the partial derivatives and, hence,  $\alpha_R = 0$, which
finishes the proof.
\end{proof}

\begin{cor}
 Let $\Gamma \subset \mathbb R^3$ be $3$-independent and $|\Gamma| = 7$. Under the assumptions of the previous lemma
 there exists $p \in P_{3,6}(\Gamma)\setminus \Sigma_{3,6}(\Gamma)$ with $\Gamma \subset Z(p)$. These forms can be constructed via $Q_1^2 + Q_2^2 + Q_3^2 + \varepsilon R$ for
sufficiently small $\varepsilon > 0$.
\end{cor}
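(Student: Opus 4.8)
The plan is to deduce this corollary directly from the preceding proposition by invoking the ``sum of squares certificate'' machinery already developed in Section~\ref{dimsection}. Recall that the previous proposition establishes $R \in I^{(2)}_6(\Gamma) \setminus I^2_6(\Gamma)$, so that $\dim I^{(2)}_6(\Gamma) > \dim I^2_6(\Gamma)$; combined with Theorem~\ref{thm:ternary} (which forces the difference to be exactly one when $|\Gamma| = 7 = \binom{4}{2}+1$), this shows the face $P_{3,6}(\Gamma)$ is exactly one dimension larger than $\Sigma_{3,6}(\Gamma)$. The task is therefore to exhibit a concrete form witnessing this extra dimension, namely a nonnegative form that vanishes on $\Gamma$ and is \emph{not} a sum of squares.

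First I would set $p_0 = Q_1^2 + Q_2^2 + Q_3^2$, where $Q_1, Q_2, Q_3$ is the factoring basis of $I_3(\Gamma)$ from the preceding lemma. Since $\Gamma$ is $3$-independent, the argument in the proof of Proposition~\ref{prop:posFace} applies verbatim: the forms $Q_i$ have no common zeroes outside $\Gamma$, so $Z(p_0) = \Gamma$, and their gradients at each $s \in \Gamma$ span $s^\perp$, so $p_0$ is round at every point of $\Gamma$. By Corollary~\ref{ext2} (equivalently, by directly applying the extension Lemma~\ref{ext}), adding a sufficiently small multiple of any form in $I^{(2)}_6(\Gamma)$ keeps the result nonnegative. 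In particular, for sufficiently small $\varepsilon > 0$ the form $p = p_0 + \varepsilon R$ lies in $P_{3,6}(\Gamma)$, since $R$ vanishes to order $2$ on $\Gamma = Z(p_0)$ by construction. On the other hand, $p$ is not a sum of squares: any element of $\Sigma_{3,6}(\Gamma)$ lies in $I^2_6(\Gamma)$ by Proposition~\ref{prop:sosFace}, and since $p_0 \in I^2_6(\Gamma)$ but $R \notin I^2_6(\Gamma)$, we get $p = p_0 + \varepsilon R \notin I^2_6(\Gamma)$, hence $p \notin \Sigma_{3,6}(\Gamma)$. Thus $p \in P_{3,6}(\Gamma) \setminus \Sigma_{3,6}(\Gamma)$ with $\Gamma \subset Z(p)$, as claimed.

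The only genuine subtlety — and the step I expect to need the most care — is verifying the hypotheses of the extension lemma, namely that $Z(p_0)$ is \emph{finite} and that $p_0$ is round at each of its zeroes, so that Lemma~\ref{ext} genuinely applies with $q = R$. Finiteness and roundness both hinge on the genericity/$3$-independence assumption on $\Gamma$: roundness requires that the gradients of the $Q_i$ at each $s \in \Gamma$ span the $2$-dimensional space $s^\perp$, which is precisely the content of the second condition in Definition~\ref{def:dIndependence} once one knows $I_3(\Gamma)$ has the expected dimension $3$. Since all of this was already carried out in the proof of Proposition~\ref{prop:posFace} for the general sum-of-squares certificate $\sum q_i^2$ over a $d$-independent set, the present situation is simply the specialization to $(n,2d) = (3,6)$ and $k=7$, and there is nothing new to prove beyond citing that argument. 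Everything else is bookkeeping: the inclusion $\Sigma_{3,6}(\Gamma) \subset I^2_6(\Gamma)$ from Proposition~\ref{prop:sosFace} and the explicit non-membership $R \notin I^2_6(\Gamma)$ from the preceding proposition do the rest.
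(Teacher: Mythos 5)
Your proposal is correct and follows exactly the route the paper intends for this corollary: the certificate $p_0=Q_1^2+Q_2^2+Q_3^2$ is round with $Z(p_0)=\Gamma$ by $3$-independence (the argument of Proposition \ref{prop:posFace}), Lemma \ref{ext} gives nonnegativity of $p_0+\varepsilon R$ for small $\varepsilon$, and $R\notin I^2_6(\Gamma)$ together with $\Sigma_{3,6}(\Gamma)\subset I^2_6(\Gamma)$ rules out membership in $\Sigma_{3,6}(\Gamma)$. No gaps to report.
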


\begin{example}
Note that the condition $K_i(u_j^*)\neq 0$ is essential.
Let $\Gamma=\{s_1,\ldots,s_7\}$ with $$s_1 = (1,0,0), s_2 = (0,1,0), s_3 = (0,0,1), s_4 = (1,1,0), s_5 = (1,0,1), s_6 = (0,1,1), s_7 = (1,1,1).$$ 
It can be checked that this set of points is $3$-independent. However, one can verify that $u_3^* = \left(\frac{1}{2}, \frac{1}{2}, 0\right)$ and hence it represents the same projective point as $s_4$. In particular, we always have $K_1(u_3^*) = 0$ by construction. However, we can perturb the point $s_4$ to $\tilde{s_4} = (1, -2, 2)$. The new set of points remains 
$3$-independent, and we have $K_i(u_j^*) \neq 0$ for $1\leq i\leq j\leq 3$. The three basis polynomials are then given by
\begin{align*}
Q_1(x_1,x_2,x_3) &= (3x_1x_2 - x_1x_3 - 2x_2x_3)(-x_3 + x_2 + x_1),\\
Q_2(x_1,x_2,x_3) &= (x_2 - x_3)(x_1 - x_3)(2x_1 + x_2),\\
Q_3(x_1,x_2,x_3) &= x_3(8x_1^2 + x_2^2 - 8x_1x_3 - x_2x_3).
\end{align*}
Furthermore, we have $$R =-x_3(2x_1 + x_2)^2(x_1 + x_2 - x_3)(x_2 - x_3)(-x_3 + x_1).$$
One can check that $$Q_1^2 + Q_2^2 + Q_3^2 + R \in P_{3,6}(\Gamma)\setminus \Sigma_{3,6}(\Gamma).$$
\end{example}

\section{General (naive) bounds for dimensional differences between $\p(\Gamma)$ and $\sq(\Gamma)$}
\label{gapsection}
We now want to derive some naive dimension counts for the dimensions of $\p(\Gamma)$ and $\sq(\Gamma)$, which help to understand when, theoretically, dimensional gaps between these faces can occur. Moreover, these counts yield some a priori bounds on the minimal size of $\Gamma$ such that dimensional gaps between these faces can be observed. A first step in this direction is the following lemma.

\begin{lemma}
\label{lem:naive}
Let $\Gamma$ be a $d$-independent set of $k$ points in $\RR\PP^{n-1}$. Then $\dim \p(\Gamma) \geq \binom{n+2d-1}{2d}-kn$ and $\dim\sq(\Gamma) \leq \binom{\binom{n+d-1}{d}-k+1}{2}$.
\end{lemma}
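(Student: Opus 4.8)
The plan is to establish the two inequalities separately, since they come from the two inclusions discussed earlier in the paper, namely $P_{n,2d}(\Gamma)\subset I^{(2)}_{2d}(\Gamma)$ and $\Sigma_{n,2d}(\Gamma)\subset I^2_{2d}(\Gamma)$, together with the dimension counts for these ideal components.

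\emph{Lower bound for $\dim P_{n,2d}(\Gamma)$.} First I would invoke Proposition~\ref{prop:posFace}: since $\Gamma$ is $d$-independent, the face $P_{n,2d}(\Gamma)$ is full-dimensional in $I^{(2)}_{2d}(\Gamma)$, so $\dim P_{n,2d}(\Gamma)=\dim I^{(2)}_{2d}(\Gamma)$. Then I would apply the general inequality \eqref{eq:AH}, namely $\dim I^{(2)}_{2d}(\Gamma)\geq \dim H_{n,2d}-n|\Gamma|$, which holds because each double point imposes at most $n$ linear conditions on forms of degree $2d$. Since $\dim H_{n,2d}=\binom{n+2d-1}{2d}$ and $|\Gamma|=k$, this yields $\dim P_{n,2d}(\Gamma)\geq \binom{n+2d-1}{2d}-kn$.

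\emph{Upper bound for $\dim \Sigma_{n,2d}(\Gamma)$.} Here I would use Proposition~\ref{prop:sosFace}, which gives $\dim \Sigma_{n,2d}(\Gamma)=\dim I^2_{2d}(\Gamma)$. It remains to bound $\dim I^2_{2d}(\Gamma)$ from above. Since $\Gamma$ is $d$-independent, the first condition of $d$-independence in particular implies that the conditions of vanishing at the $k$ points of $\Gamma$ are linearly independent in $H_{n,d}$ (otherwise the forms in $I_d(\Gamma)$ would satisfy extra relations forcing additional structure incompatible with $d$-independence; this is essentially the observation made after Definition~\ref{def:dIndependence} and used in Section~\ref{Sec:sixpoints}), so $\dim I_d(\Gamma)=\binom{n+d-1}{d}-k$. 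The space $I^2_{2d}(\Gamma)$ is spanned by the products $f g$ with $f,g\in I_d(\Gamma)$ (using that $I_d(\Gamma)$ generates $I(\Gamma)$ in the relevant degree, or simply that $(I^2)_{2d}\supseteq (I_d)^2$ and in fact the degree-$2d$ part of $I^2$ is exactly $(I_d)\cdot(I_d)$ when $\alpha(I)\le d$; cf.\ Lemma~\ref{lem3}), hence $\dim I^2_{2d}(\Gamma)\leq \binom{\dim I_d(\Gamma)+1}{2}=\binom{\binom{n+d-1}{d}-k+1}{2}$, the number of unordered pairs of basis elements. This gives $\dim \Sigma_{n,2d}(\Gamma)\leq \binom{\binom{n+d-1}{d}-k+1}{2}$.

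The routine part is the dimension bookkeeping; the only point that requires a little care is justifying that $\dim I_d(\Gamma)=\binom{n+d-1}{d}-k$, i.e.\ that $d$-independence forces the point conditions in degree $d$ to be independent — but this is immediate from the first condition of Definition~\ref{def:dIndependence} combined with the second (the codimension statement there presupposes the $|\Gamma|$ point conditions are independent), so there is no serious obstacle. I would simply remark that both estimates are ``naive'' in that neither inclusion's dimension count need be tight in general, which is exactly what makes the comparison of the two bounds informative for locating dimensional gaps.
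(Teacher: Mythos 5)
Your proof is correct and takes essentially the same route as the paper: the lower bound via Proposition \ref{prop:posFace} combined with inequality \eqref{eq:AH}, and the upper bound via Proposition \ref{prop:sosFace} combined with the count of at most $\binom{\dim I_d(\Gamma)+1}{2}$ pairwise products from $I_d(\Gamma)$, whose dimension $\binom{n+d-1}{d}-k$ is forced by $d$-independence (indeed by its second condition). One small caution: your parenthetical appeal to Lemma \ref{lem3} overstates it --- that lemma requires $\alpha(I)=d$, not $\alpha(I)\le d$, and the equality $(I^2)_{2d}=\operatorname{span}(I_d\cdot I_d)$ is not a consequence of it and can fail for ideals with $\alpha(I)<d$ --- but this is immaterial here, since the upper bound only needs $\dim\Sigma_{n,2d}(\Gamma)\le\binom{\dim I_d(\Gamma)+1}{2}$, which holds because every square occurring in a sum-of-squares representation of an element of $\Sigma_{n,2d}(\Gamma)$ must itself vanish on $\Gamma$ and hence lie in $I_d(\Gamma)$, exactly as in the paper's use of Proposition \ref{prop:sosFace}.
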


\begin{proof}
The dimension of $I^{(2)}_{2d}(\Gamma)$ is at least $\binom{n+2d-1}{2d}-kn$ since we are imposing at most $kn$ linearly independent conditions by forcing forms to double vanish at all points of $\Gamma$. From Proposition \ref{prop:posFace} we know that $\p(\Gamma)$ is full-dimensional in $I^{(2)}_{2d}(\Gamma)$ and, thus, the bound for the dimension of $\p(\Gamma)$ follows.

Since $\Gamma$ is $d$-independent, we know that the dimension of $I_d(\Gamma)$ is $\binom{n+d-1}{d}-k$. We can have at most $\binomi{\dim I_d(\Gamma)+1}{2}$ linearly independent pairwise products coming from $I_d(\Gamma)$ and, therefore, the dimension of $I^{2}_{2d}(\Gamma)$ is at most $\binomi{\binom{n+d-1}{d}-k+1}{2}$. Since the containment $\sq(\Gamma) \subset I^{2}_{2d}(\Gamma)$ is full-dimensional by Proposition \ref{prop:sosFace}, the bound for $\sq(\Gamma)$ follows.
\end{proof}

For a $d$-independent set $\Gamma$ of size $k$ let $G_{n,2d}(k)$ be the size of the minimal gap between the dimensions of $\p(\Gamma)$ and $\sq(\Gamma)$ which by Lemma \ref{lem:naive} is given by:
\begin{equation}
G_{n,2d}(k)=\binom{n+2d-1}{2d}-kn-\binomi{\binom{n+d-1}{d}-k+1}{2}.
\end{equation}

From Section \ref{Sec:dindependent} we know that there exist $d$-independent sets of any cardinality $k \leq \binom{n+d-1}{d}-n$. We want to determine the smallest positive integer $k$ for which $G_{n,2d}(k) > 0$, and we want to find the maximum of $G_{n,2d}(k)$.

\begin{prop}\label{uglybounds}
The function $G_{n,2d}(k)$ is maximized at $k=\binom{n+d-1}{d}-n$. Its value and the largest gap are
\begin{equation}\label{lessugly}
\binom{n+2d-1}{2d}-n\binom{n+d-1}{d}+\binom{n}{2}.
\end{equation}
The smallest value of $k$ such that $G_{n,2d}(k) > 0$ is the smallest integer strictly greater than:
\begin{equation}\label{fugliness}
\binom{n+d-1}{d}-n+\frac{1}{2}-\sqrt{\left(n-\frac{1}{2}\right)^2+2\binom{n+2d-1}{2d}-2n\binom{n+d-1}{d}}.
\end{equation}

\end{prop}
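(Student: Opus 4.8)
The plan is to treat $G_{n,2d}$ as a real-valued quadratic function of the continuous variable $k$, locate its critical point, and then translate the analysis back to integers. Write $N = \binom{n+d-1}{d} = \dim H_{n,d}$ and $M = \binom{n+2d-1}{2d} = \dim H_{n,2d}$, so that
\begin{equation*}
G_{n,2d}(k) = M - kn - \binom{N-k+1}{2} = M - kn - \frac{(N-k+1)(N-k)}{2}.
\end{equation*}
Expanding the last term, $G_{n,2d}(k)$ is a quadratic polynomial in $k$ whose leading coefficient (the coefficient of $k^2$) is $-\tfrac12$, so the parabola opens downward and $G_{n,2d}$ is concave. First I would compute the derivative $\frac{d}{dk}G_{n,2d}(k) = -n + (N - k) + \tfrac12 = N - n + \tfrac12 - k$, which vanishes at $k^* = N - n + \tfrac12$. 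Thus on the integers $G_{n,2d}$ is increasing for $k \le N-n$ and its maximum over the admissible range $k \le N - n$ is attained at the right endpoint $k = N - n = \binom{n+d-1}{d}-n$; this is exactly the largest cardinality for which $d$-independent sets exist by the construction in Section \ref{Sec:dindependent}. (One should note that $k^* = N-n+\tfrac12$ is the unique real maximizer and lies just to the right of $N-n$, so among admissible integers the endpoint genuinely wins.) Substituting $k = N-n$ gives
\begin{equation*}
G_{n,2d}(N-n) = M - (N-n)n - \binom{N-(N-n)+1}{2} = M - nN + n^2 - \binom{n+1}{2} = M - nN + \binom{n}{2},
\end{equation*}
using $n^2 - \binom{n+1}{2} = n^2 - \tfrac{n(n+1)}{2} = \tfrac{n(n-1)}{2} = \binom{n}{2}$. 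This is precisely \eqref{lessugly}.

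For the second assertion I would simply solve the inequality $G_{n,2d}(k) > 0$ for real $k$. Multiplying the defining equation by $2$, the condition $G_{n,2d}(k)>0$ is
\begin{equation*}
k^2 - (2N - 2n + 1)k + \bigl(N^2 + N - 2M\bigr) < 0,
\end{equation*}
a downward-region condition on a concave quadratic in $k$ (after the sign flip, an upward parabola being negative between its roots). By the quadratic formula the roots are
\begin{equation*}
k = \frac{2N-2n+1}{2} \pm \frac{1}{2}\sqrt{(2N-2n+1)^2 - 4(N^2+N-2M)}.
\end{equation*}
Expanding the discriminant, $(2N-2n+1)^2 - 4(N^2+N-2M) = 4N^2 - 4N(2n-1) + (2n-1)^2 - 4N^2 - 4N + 8M = (2n-1)^2 - 8nN + 8M$, and since $(2n-1)^2 = 4\bigl(n-\tfrac12\bigr)^2$ this equals $4\bigl[(n-\tfrac12)^2 + 2M - 2nN\bigr]$. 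Hence the roots are $k = N - n + \tfrac12 \pm \sqrt{(n-\tfrac12)^2 + 2M - 2nN}$, and $G_{n,2d}(k) > 0$ holds precisely for $k$ strictly between these two roots. Since the maximizer $k^* = N-n+\tfrac12$ sits at the midpoint and $G_{n,2d}$ is increasing up to there, the relevant (smaller) threshold is the left root
\begin{equation*}
N - n + \frac12 - \sqrt{\Bigl(n-\frac12\Bigr)^2 + 2M - 2nN},
\end{equation*}
which is exactly \eqref{fugliness}; the smallest integer $k$ with $G_{n,2d}(k)>0$ is therefore the least integer strictly exceeding this value, as claimed.

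The only genuinely delicate point — and the step I would double-check carefully — is the endpoint behavior: one must verify that the admissible maximizer over integers is indeed $k = N - n$ rather than something smaller, i.e. that $k^* = N - n + \tfrac12 > N - n$, which is immediate, and that $G_{n,2d}(N-n) \ge G_{n,2d}(N-n-1)$, which follows from concavity together with $k^* > N-n$. There is also a tacit nondegeneracy assumption that the quantity under the square root in \eqref{fugliness} is nonnegative whenever a positive gap exists at all — equivalently that \eqref{lessugly} is positive, which is exactly the regime in which nonnegative non-sos forms appear — so the stated "smallest integer strictly greater than" is well-defined; in the degenerate cases where \eqref{lessugly} $\le 0$ there is no such $k$ and the statement is vacuous. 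Everything else is the bookkeeping of completing the square, which I have sketched above and would simply present cleanly.
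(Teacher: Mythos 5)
Your proposal is correct and follows essentially the same route as the paper: treat $G_{n,2d}(k)$ as a concave quadratic in $k$, note the real maximizer $k^*=\binom{n+d-1}{d}-n+\tfrac12$ so the integer maximum (on the admissible range) is at $k=\binom{n+d-1}{d}-n$ giving \eqref{lessugly}, and obtain \eqref{fugliness} as the smaller root via the quadratic formula. You simply carry out explicitly the simplifications and the discriminant computation that the paper declares routine and omits.
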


\begin{proof}
We observe that $G_{n,d}(k)$ is a quadratic function of $k$ with a negative leading coefficient. It is easy to show that $G_{n,d}(k)$ attains its maximum value at $\displaystyle k=\binom{n+d-1}{d}-n+\frac{1}{2}$. Therefore, the maximum value of $G_{n,2d}(k)$ for an integer $k$ will occur with $k=\binom{n+d-1}{d}-n$ and it is a matter of easy simplification to obtain equation \eqref{lessugly}.

The bound in equation \eqref{fugliness} comes from simply calculating the smallest root of $G_{n,2d}(k)$. We skip the routine application of the quadratic formula.
\end{proof}

We make several remarks. First we observe that the largest gap number from Proposition \ref{uglybounds}
$$\binom{n+2d-1}{2d}-n\binom{n+d-1}{d}+\binom{n}{2}$$
is zero in all cases, where the cones $\p$ and $\sq$ coincide. However, this number is strictly positive in the cases, where there are nonnegative forms that are not sums of squares. In the smallest cases $n=4$, $2d=4$ and $n=3$, $2d=6$, in which $\p$ is strictly larger than $\sq$, the gap number is 1.

However, as either $n$ or $d$ grows, we can see that the dimensional gap between exposed faces of $\p$ and $\sq$ grows and, asymptotically, it approaches the full dimension of the vector space $P_{n,2d}$.

We note that the bound from Equation \ref{fugliness} simplifies remarkably for $n=3$. In this case, we get the bound of $\binom{d+2}{2}-d-1$, and we need to take the smallest integer above that, which leads to
$$k=\binom{d+2}{2}-d = \binom{d+1}{2}+1.$$
This is actually the correct bound for the case of $n=3$ as we proved in Theorem \ref{thm:ternary}.

 Though, in general, for $n \geq 4$ the formula does not appear to simplify and the bound given is not going to be optimal, Theorem \ref{thm:Difference4} implies optimality of the naive dimension count also in the case $(n,2d) = (4,4)$. The non-optimality in the general case is caused by an overcount in the bound for the dimension of the vector space $I^2_{2d}(\Gamma)$.

We note that for $k=\binom{n+d-1}{d}-n$, which leads to the largest gap, the bound on the dimension of $I^2_{2d}(\Gamma)$ is also optimal, generically. We can see this from the example of the $d$-independent set $S_{n,d}$ from Section \ref{Sec:dindependent}, which has exactly this cardinality. Indeed, for  $S_{n,d}$, it is not hard to show that all pairwise products of the forms $Q_i$, which form the basis of $I_d(S_{n,d})$, are linearly independent in $H_{n,2d}$. This shows that the dimension of $I_{2d}^2(S_{n,d})$ is $\binom{n+1}{2}$, which is exactly equal to the bound we use.\\

\begin{question}From the above discussion it is natural to ask the following questions.
 \begin{itemize}
  \item[(i)] In addition to the known cases $((n,2d)\in\{(3,2d),(4,4)\})$, are there other cases, in which the naive bound from Proposition \ref{uglybounds} for the smallest cardinality of a set $\Gamma$ forcing a dimensional gap between the corresponding faces is correct?
\item[(ii)]  Given $n, d$, is it possible to characterize the set of integers $S$ such that $\dim \p(\Gamma) > \dim \sq(\Gamma)$ if and only if $|\Gamma| \in S$?
\item[(iii)]  Can the naive bounds from Proposition \ref{uglybounds} be further improved?
\item[(iv)]  Is it possible to characterize more cases for which $\dim \p(\Gamma) - \dim \sq(\Gamma) = 1$? 
 \end{itemize}

\end{question}

\section{Acknowledgments}
We would like to thank Aldo Conca for his explanations concerning symbolic powers of ideals.

\bibliography{biblio}
  \bibliographystyle{plain}

\end{document}